\documentclass[11pt]{amsart}
\textwidth=5in
\textheight=7.5in
\usepackage{amssymb,latexsym}
\newtheorem{theorem}{Theorem}[section]

\newtheorem{prop}[theorem]{Proposition}
\newtheorem{lemma}[theorem]{Lemma}
\newtheorem{remark}[theorem]{Remark}
\newtheorem{question}[theorem]{Question}
\newtheorem{definition}[theorem]{Definition}
\newtheorem{cor}[theorem]{Corollary}
\newtheorem{example}[theorem]{Example}
\newtheorem{conj}[theorem]{Conjecture}

\begin{document}

\title[Symplectic cones and cohomological properties]{Comparing tamed and compatible symplectic cones and cohomological properties of almost complex manifolds}
\author{Tian-Jun Li }
%    Information for first author
%    Address of record for the research reported here
\address{School  of Mathematics\\  University of Minnesota\\ Minneapolis, MN 55455}
\email{tjli@math.umn.edu}
\author{Weiyi Zhang}
\address{School  of Mathematics\\  University of Minnesota\\ Minneapolis, MN 55455}
\email{zhang393@math.umn.edu}

\begin{abstract}  We  introduce certain homology and cohomology
subgroups for any almost complex structure and study their pureness,
fullness and duality properties. Motivated by a question of
Donaldson, we use these groups to relate $J-$tamed symplectic cones
and $J-$compatible symplectic cones over a large class of  almost
complex manifolds, including all  K\"ahler manifolds, almost
K\"ahler  $4-$manifolds and  complex surfaces.
% \\[3mm]
% Mathematics Subject Classification: 53D05, 32Q60
\end{abstract}

\maketitle
%\begin{center}
%\textit{Dedicated to Prof. Shing-Tung Yau on the occasion of his
%$60^{th}$ birthday}\end{center}

\section{Introduction}

Let $M$ be a closed oriented smooth $2n-$manifold. A symplectic form
compatible with the orientation is a closed 2-form $\omega$ such
that $\omega^n$ is a volume form compatible with the given
orientation. Let $\Omega_M$ be the space of such 2-forms. By taking
the cohomology class, we have the projection map $cc: \Omega_M
\longrightarrow H^2(M;\mathbb R)$. The image $cc(\Omega_M) \subset
H^2(M)$ is called the symplectic cone of $M$, and is denoted by
$\mathcal{C}_M$. In recent years, $\mathcal{C}_M$ has been
extensively studied, especially in dimension 4. In \cite{M}, McDuff
determined $\mathcal{C}_M$ for all the ruled surfaces. A complete
description for the symplectic cone of all the 4-manifolds with
$b^+=1$ was subsequently given in \cite{LL}. After that, several
attempts in different directions were made, $e.g.$ \cite{LU} and
\cite{DL}.

In this note, instead of only fixing an orientation, we further fix a
specific almost complex structure $J$ compatible with the given
orientation. That is to say, we consider an almost complex
$2n-$manifold $(M, J)$. Correspondingly, we study the following
subcones of $\mathcal{C}_M$ associated to the almost complex
structure $J$.

\begin{definition} The $J-$tamed symplectic cone is
$$\mathcal K_J^t=\{[\omega]\in H^2(M;\mathbb R)|\hbox{$\omega$ is tamed by
$J$}\},$$ and  the $J-$compatible symplectic cone is
$$\mathcal K_J^c=\{[\omega]\in H^2(M;\mathbb R)|\hbox{$\omega$ is
compatible with $J$}\}.$$
\end{definition}

Recall that $J$  is an automorphism of the tangent bundle $TM$
satisfying $J^2=-$id,  and $J$ is said to tame $\omega$ if $\omega$
is positive on any $J-$line span$(v, Jv)$ where $v$ is a nonzero
tangent vector.  An $\omega-$tamed $J$ is said to be compatible with
$J$ if, further, $\omega$ is $J-$invariant.
 Since $J-$tameness is an open condition and is preserved
under convex combinations,  $\mathcal K_J^t$ is an open convex cone
in $H^2(M;\mathbb R)$. Moreover, cohomologous $J-$tamed forms are
isotopic.  Since $J-$compatibility is also preserved under convex
combinations, $\mathcal K_J^c$ is a convex subcone of $\mathcal
K_J^t$.

There is a notion of integrability of $J$ given by the vanishing of
the Nijenhuis tensor.
 The deep Newlander-Nirenberg Theorem asserts that $J$ is
integrable if and only it is induced by a complex structure. For a
complex structure  $J$, there is the Dolbeault complex and the
associated Dolbeault groups $H^{p,q}_{\bar \partial}(M)$. The
$J-$compatible cone $\mathcal K_J^c$ for a complex structure is just
the usual K\"ahler cone. We call a complex structure $J$ K\"ahler if
its K\"ahler cone is non-empty. In this case, the K\"ahler cone is
an open convex cone of  $H^{1,1}_{\bar
\partial}(M)_{\mathbb R}$, where
$$H^{p,q}_{\bar
\partial}(M)_{\mathbb R}=H^{p,q}_{\bar \partial}(M)\cap H^{p+q}(M;\mathbb
R).$$

More generally, we call $J$ almost K\"ahler if its compatible cone
is not empty. Almost K\"ahler manifolds have been studied from the
point of view of Riemannian geometry ever since Gray. The emphasis
is often on a fixed compatible metric $g$ and its curvature
properties, see e.g. \cite{AD}. Our point of view is different as we
do not fix a priori a metric.

Part of our motivation for studying these cones is due to a question
raised by Donaldson in \cite{D},

\begin{question}\label{D}
If $J$ is an almost complex structure on a compact 4--manifold which
is tamed by a symplectic form, is there a symplectic form compatible
with $J$ (and in the same cohomology class if $b^+=1$)?
\end{question}

Deep results on this question have been made in \cite{W} and
\cite{TWY}. In our language this question is, if $\mathcal K_J^t$ is
not empty for some $J$, then is $\mathcal K_J^c$ nonempty as well?
This question only makes sense in dimension 4, because, for $n\geq
3$, $\mathcal K_J^c$ is empty even locally for a generic almost
complex structure (c.f. \cite{Le} and \cite{Tom}).
%The second
%statement of Theorem \ref{main} says that Donaldson's question has a
%positive answer in the case $J$ is integrable, and the first
%statement is a quantative version in the K\"ahler case.

Also motivated by  Question \ref{D}, we introduce in section 2 the
analogues of the (real) Dolbeault groups
$$H^{1,1}_{\bar
\partial}(M)_{\mathbb R}\quad \hbox{and} \quad   (H^{2,0}_{\bar
\partial}(M)\oplus H^{0,2}_{\bar
\partial}(M))_{\mathbb R}$$
for general $J$, which we denote by
\begin{equation}\label{2} H^{1,1}_{J}(M)_{\mathbb R}\quad \hbox{and} \quad
H^{(2,0),(0,2)}_{J}(M)_{\mathbb R}\end{equation} respectively (see
Definition \ref{new}).

An important observation related to Question \ref{D} is, as in the
integrable case, the $J-$compatible cone is an open (possibly empty)
convex cone of $H^{1,1}_{J}(M)_{\mathbb R}$.

  We analyze for what $J$  the groups in \eqref{2} give rise to a direct sum
  decomposition of $H^2(M;\mathbb R)$. Such a $J$ is called
  $C^{\infty}$ pure and full.

We also introduce the corresponding groups for currents
\begin{equation}\label{2'} H_{1,1}^{J}(M)_{\mathbb R}\quad \hbox{and} \quad
H_{(2,0),(0,2)}^{J}(M)_{\mathbb R}\end{equation} and introduce the notion of pure and full when the groups in \eqref{2'} give rise to a direct sum
  decomposition of $H_2(M;\mathbb R)$.

Using ideas in \cite{S} and \cite{HL} we prove in section 3

\begin{theorem}\label{extension}
 Suppose $J$ is a  $C^{\infty}$  full almost complex structure.
 If $\mathcal K_J^c$ is non-empty
 then
$$\mathcal K_J^t=\mathcal
K_J^c+H_J^{(2,0),(0,2)}(M)_{\mathbb R}.$$
\end{theorem}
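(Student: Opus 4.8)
The proof splits into the two inclusions.

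For the inclusion $\mathcal K_J^c + H_J^{(2,0),(0,2)}(M)_{\mathbb R} \subseteq \mathcal K_J^t$, the point is a pointwise observation: any $J$-anti-invariant $2$-form $\beta$ vanishes on every $J$-line, since $\beta(Jv, J(Jv)) = \beta(v, Jv)$ by antisymmetry, while $\beta(Jv, J(Jv)) = -\beta(v, Jv)$ by anti-invariance, forcing $\beta(v, Jv) = 0$. Hence, representing a class in $\mathcal K_J^c$ by a compatible form $\alpha$ and a class in $H_J^{(2,0),(0,2)}(M)_{\mathbb R}$ by a closed anti-invariant form $\beta$, the closed form $\alpha + \beta$ satisfies $(\alpha+\beta)(v, Jv) = \alpha(v, Jv) > 0$ for all nonzero $v$; thus $\alpha + \beta$ is tamed, in particular nondegenerate, and its class $[\alpha] + [\beta]$ lies in $\mathcal K_J^t$.

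The reverse inclusion is the substantial one, and here I would use the duality between transverse closed forms and structure currents developed by Sullivan \cite{S} and Harvey--Lawson \cite{HL}. The cone structure to use is the field of oriented $J$-lines $\mathrm{span}(v, Jv)$, whose structure currents are the weak limits of positive combinations of such $J$-lines. Two features are central. First, every $J$-line bivector $v \wedge Jv$ is $J$-invariant (as $Jv \wedge J(Jv) = v \wedge Jv$), so every structure current is $J$-invariant; consequently structure cycles annihilate the anti-invariant classes, i.e. they pair to zero with $H_J^{(2,0),(0,2)}(M)_{\mathbb R}$. Second, a closed $2$-form is tamed by $J$ precisely when it is strictly positive on all nonzero structure currents, while a compatible form is such a transverse form that is in addition $J$-invariant.

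With $\mathcal K_J^c$ nonempty I would invoke Sullivan's theorem, applied within the space of $J$-invariant forms and its dual $J$-invariant currents, to obtain the characterization
$$\mathcal K_J^c = \{\, a \in H^{1,1}_{J}(M)_{\mathbb R} : \langle a, T\rangle > 0 \text{ for every nonzero structure cycle } T \,\},$$
where the existence of a compatible form guarantees that no nonzero structure current is a boundary, so that this dual cone is genuinely nonempty and open. Now take $[\omega] \in \mathcal K_J^t$ with $\omega$ tamed. Since $J$ is $C^\infty$ full, write $[\omega] = e^+ + e^-$ with $e^+ \in H^{1,1}_{J}(M)_{\mathbb R}$ and $e^- \in H_J^{(2,0),(0,2)}(M)_{\mathbb R}$. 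For any nonzero structure cycle $T$ we have $\langle e^-, T\rangle = 0$ by the first feature above and $\langle [\omega], T\rangle > 0$ because $\omega$ is tamed; hence $\langle e^+, T\rangle = \langle [\omega], T\rangle > 0$. By the displayed characterization $e^+ \in \mathcal K_J^c$, and therefore $[\omega] = e^+ + e^- \in \mathcal K_J^c + H_J^{(2,0),(0,2)}(M)_{\mathbb R}$.

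I expect the main obstacle to be establishing the compatible-cone characterization, namely that strict positivity against all structure currents yields a closed $J$-invariant, rather than merely tamed, representative. This requires running the Sullivan--Harvey--Lawson Hahn--Banach separation inside the subcomplex of $J$-invariant forms, with the dual cone identified with the $J$-invariant positive currents; the nonemptiness hypothesis on $\mathcal K_J^c$ is what excludes obstructing positive boundary currents and makes the separation effective, while $C^\infty$ fullness is exactly what lets the tamed class be split so that its anti-invariant part drops out of every structure pairing.
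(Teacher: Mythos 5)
Your proof is correct and follows essentially the same route as the paper: the easy inclusion via the pointwise vanishing of anti-invariant forms on $J$-lines, and the reverse inclusion by splitting a tamed class using $C^\infty$ fullness and pairing each piece against the homology classes of structure cycles. The characterization of $\mathcal K_J^c$ as the interior of the dual cone of $H\mathfrak C(J)\subset H^J_{1,1}(M)_{\mathbb R}$ that you defer to is precisely the paper's Theorem \ref{generalization}, which is proved there by exactly the Sullivan--Harvey--Lawson Hahn--Banach separation you outline (using the compactness of the base of the structure cone and the closedness of $\overline{\pi_{1,1}\mathcal B}$).
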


We want to point out that Theorem \ref{extension} applies to
K\"ahler complex structures. And in this case, it is not hard to see
that  $H_J^{(2,0),(0,2)}(M)_{\mathbb R}$ is isomorphic to $(H_{\bar
\partial}^{2,0}(M)\oplus H_{\bar \partial}^{0,2}(M))_{\mathbb R}$ (\cite{DLZ}).

For general almost complex structures the cohomology subgroups in
\eqref{2} and their homology analogues in \eqref{2'} seem to have
not been systematically explored in the literature. We believe that
they are important invariants of almost complex structures and
deserve further study. We would like to mention that there are two
recent papers \cite{DLZ} and \cite{FT} which are closely related to
this work (see Remark \ref{last}). In particular, it is shown in
\cite{DLZ} that any $4-$dimensional almost complex structure is
$C^{\infty}$ pure and full. It is easy to observe that
$H_J^{(2,0),(0,2)}(M)_{\mathbb R}$ is always trivial if $b^+(M)=1$.
Together with Theorem \ref{extension}, we obtain,

\begin{cor}\label{dlz}
 Suppose $(M, J)$ is an almost complex $4-$manifold
 with non-empty $\mathcal K_J^c$.
 Then
$$\mathcal K_J^t=\mathcal
K_J^c+H_J^{(2,0),(0,2)}(M)_{\mathbb R}.$$ In particular, if
$b^+(M)=1$, then $K_J^t=K_J^c$.
\end{cor}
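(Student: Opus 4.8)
The plan is to read the corollary off Theorem \ref{extension}, supplying the two missing inputs in the $4$-dimensional setting. First I would invoke the result of \cite{DLZ} that every almost complex structure on a closed $4$-manifold is $C^{\infty}$ pure and full; in particular our $J$ is $C^{\infty}$ full. Since $\mathcal K_J^c$ is assumed non-empty, the hypotheses of Theorem \ref{extension} are satisfied, and that theorem yields immediately
$$\mathcal K_J^t=\mathcal K_J^c+H_J^{(2,0),(0,2)}(M)_{\mathbb R},$$
which is the first assertion. It remains only to verify the observation, recorded above, that the summand $H_J^{(2,0),(0,2)}(M)_{\mathbb R}$ vanishes when $b^+(M)=1$, for then the displayed sum collapses to $\mathcal K_J^t=\mathcal K_J^c$.

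To prove that triviality I would argue as follows. Because $\mathcal K_J^c\neq\emptyset$, fix a $J$-compatible symplectic form $\omega$ and its almost K\"ahler metric $g$; then $\omega$ is closed and self-dual. Let $\alpha$ be any closed $J$-anti-invariant $2$-form, representing an arbitrary class of $H_J^{(2,0),(0,2)}(M)_{\mathbb R}$. In dimension four every $J$-anti-invariant form is self-dual and pointwise orthogonal to the $J$-invariant form $\omega$, so that $\alpha\wedge\omega=0$ pointwise and $\alpha\wedge\alpha=\langle\alpha,\alpha\rangle\,\mathrm{vol}_g\geq 0$ pointwise. Passing to cohomology, $[\alpha]\cdot[\omega]=0$ and $[\alpha]\cdot[\alpha]\geq 0$. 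When $b^+(M)=1$ the intersection form has signature $(1,b^-)$ and $[\omega]^2>0$, so the orthogonal complement $[\omega]^{\perp}$ is negative definite; since $[\alpha]\in[\omega]^{\perp}$ has $[\alpha]^2\geq 0$, we must have $[\alpha]^2=0$ and hence $[\alpha]=0$. Thus $H_J^{(2,0),(0,2)}(M)_{\mathbb R}=0$.

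In effect the corollary is a repackaging of Theorem \ref{extension}, so there is no substantial obstacle; the only real content is the triviality computation, and even there the work is purely linear-algebraic. The one step that warrants care is the pointwise claim that $J$-anti-invariant $2$-forms on a $4$-manifold are self-dual and annihilate $\omega$ under wedge; I would verify this in a local $J$-unitary coframe, where $\omega=e^1\wedge e^2+e^3\wedge e^4$ and the real $(2,0)+(0,2)$-forms are spanned by $e^1\wedge e^3-e^2\wedge e^4$ and $e^1\wedge e^4+e^2\wedge e^3$, each manifestly self-dual and wedging to zero against $\omega$. Everything else follows from Hodge theory on $4$-manifolds and the signature of the intersection form, both standard.
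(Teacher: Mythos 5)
Your proposal is correct and follows the paper's own route exactly: invoke the result of \cite{DLZ} that every $4$-dimensional almost complex structure is $C^{\infty}$ pure and full, apply Theorem \ref{extension}, and then note that $H_J^{(2,0),(0,2)}(M)_{\mathbb R}$ vanishes when $b^+=1$ (a fact the paper dismisses as ``easy to observe'' and which you correctly flesh out via self-duality of anti-invariant forms and the signature of the intersection form). The only difference is that the paper asserts this vanishing for arbitrary $J$ with $b^+=1$, whereas your argument uses the closed compatible form $\omega$; since $\mathcal K_J^c\neq\emptyset$ is a hypothesis of the corollary, this restriction costs nothing here.
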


This provides some positive evidence for question \ref{D},
especially in the case of $b^+=1$.

When $b^+>1$, the calculations in \cite{DLZ} also demonstrate that
$H_J^{(2,0),(0,2)}(M)_{\mathbb R}$ often vanishes. Thus, in
light of Corollary \ref{dlz}, we speculate that we might actually
have the  equality $\mathcal K_J^t=\mathcal K_J^c$ for a generic
$4-$dimensional almost complex structure.

In section 4 we focus on complex structures. We observe in this case
Theorem \ref{extension} is also a direct consequence of the deep
Nakai-Moishezon type K\"ahler criterion of \cite{DP}\footnote{and
\cite{B}, \cite{L} when $n=2$}.  We also point out the parallel to
some classical results in algebraic geometry and K\"ahler geometry.
  Further, if we let $H\mathfrak
C(J)$ be the cone of homology complex cycles, in the sense of
Sullivan \cite{S}, then we can describe it using the $J$-compatible
symplectic cone and the analytic subsets of $M$.

Moreover, for complex surfaces, we confirm Question \ref{D}.

\begin{theorem} \label{main} Let $J$ be a  complex structure on a $4-$manifold $M$.
 Then $\mathcal K_J^t(M)$ is empty if and
only if $\mathcal K_J^c(M)$ is
 empty.
\end{theorem}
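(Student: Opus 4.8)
The forward implication is immediate: every $J$-compatible form tames $J$, so $\mathcal K_J^c\subseteq\mathcal K_J^t$, and in particular $\mathcal K_J^c\neq\emptyset$ forces $\mathcal K_J^t\neq\emptyset$. The content of the statement is therefore the reverse implication, which is Question \ref{D} in the integrable case: if the complex structure $J$ is tamed by some symplectic form, then $(M,J)$ carries a K\"ahler form, i.e.\ $\mathcal K_J^c\neq\emptyset$. The plan is to reduce this to the classical characterization of K\"ahler surfaces and to extract the required closed positivity from the taming form by the duality between forms and currents.

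First I would record the pointwise consequence of taming. If $\omega$ tames $J$, then its $J$-invariant part $\tfrac12(\omega+J^{*}\omega)$ is a positive $(1,1)$-form, while the $J$-anti-invariant part $\omega^{2,0}+\omega^{0,2}$ contributes nothing to $\omega(v,Jv)$; hence $\omega$ restricts positively to every $J$-holomorphic curve. Since every $4$-dimensional $J$ is $C^{\infty}$ pure and full by \cite{DLZ}, the class $[\omega]$ decomposes as $a^{\mathrm{inv}}+a^{\mathrm{anti}}$ with $a^{\mathrm{inv}}\in H^{1,1}_J(M)_{\mathbb R}$ and $a^{\mathrm{anti}}\in H_J^{(2,0),(0,2)}(M)_{\mathbb R}$, and because an anti-invariant class wedges a $(1,1)$-class into bidegree $(3,1)+(1,3)$ it pairs trivially with every curve class; thus $a^{\mathrm{inv}}\cdot[C]=[\omega]\cdot[C]=\int_C\omega>0$ for every curve $C$. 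The essential step is then to upgrade this curvewise (and pointwise) positivity to a genuine closed positive $(1,1)$-form. For this I would appeal to the Sullivan--Harvey--Lawson duality between the cone of closed positive $(1,1)$-forms and the cone of positive currents of bidimension $(1,1)$: a closed $2$-form with positive $(1,1)$-part is precisely a functional strictly positive on nonzero such currents and vanishing on boundaries, so its existence should obstruct a nonzero positive exact $(1,1)$-current, and by the intrinsic K\"ahler criterion of Harvey--Lawson, refined on surfaces by Buchdahl \cite{B} and Lamari \cite{L}, the absence of such a current is equivalent to $(M,J)$ being K\"ahler, equivalently to $b_1(M)$ being even.

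The hard part is exactly the passage from the taming $2$-form to the correct separating functional. A taming form is closed with positive $(1,1)$-part, but it is not itself of type $(1,1)$, and the Harvey--Lawson obstruction involves the $(1,1)$-component of a boundary rather than a genuine boundary; pairing $\omega$ against such an object leaves cross terms built from $\omega^{2,0}$ and $\omega^{0,2}$ that one must show vanish or are controlled. This is where integrability and the restriction to surfaces are used: on a compact complex surface every holomorphic $2$-form is $d$-closed, the anti-invariant classes are self-dual and supply the full contribution $\dim H_J^{(2,0),(0,2)}(M)_{\mathbb R}=2p_g$ to $b^{+}$, and the surface-specific regularization of positive currents underlying \cite{B} and \cite{L} (Gauduchon metrics and the $dd^{c}$-machinery) lets one absorb the cross terms. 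Feeding this into the $C^{\infty}$ pure and full decomposition of \cite{DLZ}, the extra positive direction furnished by $a^{\mathrm{inv}}$ forces $b^{+}(M)=2p_g+1$, hence $b_1(M)$ even, hence $(M,J)$ K\"ahler, which completes the nontrivial direction.
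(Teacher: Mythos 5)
You have correctly reduced the statement to its nontrivial direction and, importantly, you have correctly located the central difficulty: a tamed form is a functional positive on the structure cone and vanishing on genuine boundaries, whereas the Harvey--Lawson criterion (Theorem \ref{HL}) requires a functional vanishing on the $(1,1)$-\emph{components} of boundaries, the discrepancy being exactly the cross terms coming from $\omega^{2,0}+\omega^{0,2}$. But having named this gap you do not close it. The two assertions you lean on are unsupported. First, saying that the Buchdahl--Lamari regularization ``lets one absorb the cross terms'' is not an argument: controlling precisely those cross terms is Question \ref{D} in the integrable case, i.e.\ it is essentially the statement being proved, so this step is circular as written. Second, the claim that the positivity of $a^{\mathrm{inv}}$ ``forces $b^+(M)=2p_g+1$'' does not follow from anything you established: on a non-K\"ahler surface the $2p_g$ classes of real parts of holomorphic $2$-forms already span a maximal positive-definite subspace of $H^2(M;\mathbb R)$, so $H_J^{1,1}(M)_{\mathbb R}$, being cup-orthogonal to it, is negative definite, and the identity $[\omega]^2=(a^{\mathrm{inv}})^2+(a^{\mathrm{anti}})^2>0$ is perfectly consistent with $(a^{\mathrm{inv}})^2\le 0$; no contradiction is produced. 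The fact you do correctly extract, namely $a^{\mathrm{inv}}\cdot[C]=\int_C\omega>0$ for every irreducible curve $C$, is also not enough on its own, because the Buchdahl--Lamari description characterizes the K\"ahler cone only for surfaces already known to be K\"ahler (equivalently, $b_1$ even); it does not by itself decide K\"ahlerness of $(M,J)$.

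The paper's proof avoids the direct implication entirely and argues by contrapositive through the classification of surfaces: if $\mathcal K_J^c=\emptyset$ then $b_1(M)$ is odd by the K\"ahler criterion of \cite{Siu}, \cite{T}, \cite{Mi}, \cite{B}; if moreover $b^+=0$ there are no symplectic forms at all, while if $b^+\ge 1$ the Kodaira classification \cite{BPV} forces $(M,J)$ to be elliptic, and by \cite{HL} the torus fibers of an elliptic surface with $b_1$ odd bound, so Theorem \ref{sullivan} rules out tamed forms. If you want to complete your approach you must supply exactly this kind of geometric input --- a nonzero positive $(1,1)$-cycle that is a boundary, such as the fiber of a non-K\"ahler elliptic surface --- rather than a positivity count on $b^+$.
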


It is
 a direct consequence of several remarkable results in complex surface
theory: the Kodaira classification \cite{BPV}, the K\"ahler
criterion of $b^+$ being odd (\cite{Siu}, \cite{T}, \cite{Mi},
\cite{B}), and the analysis of complex curves in non-K\"ahler
elliptic surfaces (\cite{HL}).

 Finally we compare the
union of tamed cones and the union of compatible cones over all
complex structures in dimension 4.

The authors wish to thank S-T. Yau for stimulating questions. We
also appreciate  F. Zheng, S. Morgan, V. Tosatti, B. Weinkove  for
discussions, S. Donaldson, M. Furuta, R. Hind, K. Ono for their
interest, A. Fino, A. Tomassini for sending to us their preprint
\cite{FT}. Finally we are grateful to T. Dr$\breve{a}$ghici for his
interest and pointing out the reference \cite{Dr2}.
The  authors are partially
supported by NSF and the McKnight foundation.

\tableofcontents

\section{Some homology and cohomology subgroups of almost complex
manifolds}
% In an
%attempt to generalize the first part of Theorem \ref{main},
We study in this section decompositions of forms and currents on
almost complex manifolds and the associated cohomology and homology
subgroups.

\subsection{Forms and currents}

On a smooth closed manifold $M$, the  space $\Omega^*(M)$ of
$C^{\infty}$ form is a vector space, and  with $C^{\infty}$
topology, it is a Fr\'echet space, i.e. a complete metrizable
locally convex topological vector space. The space $\mathcal E_*(M)$
of currents is the topological dual space, which is also a Fr\'echet
space. As a topological vector space, $\Omega^*(M)$ is reflexive,
thus it is also the dual space of $\mathcal E_*(M)$.

The exterior derivative on $\Omega^*(M)$ induces a boundary operator
on $\mathcal E_*(M)$, making it also into a complex. By abusing
notation we denote both  the differentials in the current complex
and the form complex by $d$.

Denote the space of closed forms by $\mathbf Z$ and the space of
exact forms by $\mathbf B$. Denote the space of closed currents by
$\mathcal Z$ and the space of boundaries by $\mathcal B$. $\mathbf
Z$ and $\mathcal Z$  are  closed subspaces since $d$ is continuous.
 It is easy to check  a current is closed if and
only if it vanishes on  $\mathbf B$, and a form is closed if and
only if it vanishes on space $\mathcal B$.

We call the homology groups of the complex of  currents the De Rham
homology groups.
 The inclusion
of smooth forms into the currents induces a natural isomorphism of
the $(2n-k)-$th De Rham cohomology group and the $k-$th De Rham
homology group.  Thus each closed  $k-$current is homologous to a
smooth $(2n-k)-$form. Moreover, by Theorem 17' in \cite{DR}, a
current is a boundary if and only if it vanishes on $\mathbf Z$, and
a smooth form is a boundary if and only if it vanishes on $\mathcal
Z$ (see \cite{DR}). This implies in particular that both $\mathbf B$
and $\mathcal B$ are also closed subspaces.

Let $J$ be an almost complex structure on a smooth manifold $M$.
Then for each non-negative  integer $k$, there is a natural action
of $J$ on the space
 $\Omega^k(M)_\mathbb{C}$ of complex smooth differential $k-$forms
${\Omega^k(M)}_{\mathbb C}:=\Omega^k(M)\otimes \mathbb C$, which
induces
   a topological type
decomposition
$${\Omega^k(M)}_{\mathbb C}=\oplus_{p+q=k} \Omega_J^{p,q}(M)_{\mathbb C}.$$
If $k$ is even, $J$ also acts on $\Omega^k(M)$ as an involution and
decomposes it into the topological direct sum of the invariant part
$\Omega_J^{2, +}(M)$ and the anti-invariant part
$\Omega_J^{2,-}(M)$. We are particularly interested in the case
$k=2$. In this case, the two decompositions are related in the
following way:
$$\begin{array}{llll}\Omega^{2,+}_J(M)&=\Omega_J^{1,1}(M)_{\mathbb R}&:=&\Omega_J^{1,1}(M)_{\mathbb C}\cap
\Omega^2(M),\cr \Omega^{2,-}_J(M)&=\Omega_J^{(2,0),
(0,2)}(M)_{\mathbb R}&:=&(\Omega_J^{(2,0)}(M)_{\mathbb C}\oplus
\Omega_J^{0,2}(M)_{\mathbb C})\cap \Omega^2(M).
\end{array}
$$
 Let $\mathbf Z_J^{1,1}\subset \Omega^{1,1}_J(M)_{\mathbb R}$ be the subspace of real closed
 $(1,1)$ forms, and $\mathbf B_J^{1,1}\subset \Omega^{1,1}_J(M)$ be the subspace
of real exact $(1,1)$ forms. Similarly define subspaces of
$\Omega_J^{(2,0), (0,2)}(M)_{\mathbb R}$, $\mathbf Z_J^{(2,0),
(0,2)}$ and $\mathbf B_J^{(2,0), (0,2)}$.

 For the space of real $2-$currents, we have a similar decomposition,
$$\mathcal E_2(M)=\mathcal E^J_{1,1}(M)_{\mathbb R}\oplus \mathcal
E^J_{(2,0), (0,2)}(M)_{\mathbb R},$$ and the corresponding subspaces
of closed, and boundary currents,
 $$\begin{array}{lllll}\mathcal B^J_{1,1}&\subset & \mathcal Z^J_{1,1}&\subset &\mathcal E^J_{1,1}(M)_{\mathbb
 R},\cr
\mathcal B^J_{(2,0), (0,2)}&\subset&  \mathcal Z^J_{(2,0),
(0,2)}&\subset &\mathcal E^J_{(2,0), (0,2)}(M)_{\mathbb
 R}.
 \end{array}
 $$

We note the dual space of $\mathcal E^J_{1,1}(M)_{\mathbb R}$ is
$\Omega_J^{1,1}(M)_{\mathbb R}$, and vice versa. The same can be
said for  $\mathcal E^J_{(2,0), (0,2)}(M)_{\mathbb R}$ is
$\Omega_J^{(2,0), (0,2)}(M)_{\mathbb R}$
 Since closed subspaces of a Fr\'echet space is Fr\'echet,
all the spaces introduced above are  Fr\'echet spaces.

\subsection{Pureness and fullness}
\subsubsection{$C^{\infty}$ pure and full almost complex structures}

\begin{definition}\label{new}  If $S=(1,1)$, or $(2,0), (0,2)$, define
$$H_J^{S}(M)_\mathbb{R}=\{[\alpha]\in H^*(M;\mathbb R)
|\alpha\in \mathbf Z_J^{S}\}=\frac{\mathbf Z_J^{S}}{\mathbf
B_J^{S}}.$$

 \end{definition}
It is simple to observe

\begin{lemma}\label{opensubset} The $J-$compatible cone $\mathcal K_J^c$ is contained in
$ H^{1,1}_J(M)_{\mathbb R}$ as an open subset.
\end{lemma}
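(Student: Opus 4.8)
The plan is to establish the containment and the openness separately. For the containment, I would start from the definition: if $[\omega]\in\mathcal K_J^c$ then $\omega$ is a symplectic form compatible with $J$, so in particular $\omega$ is closed and $J$-invariant. Since the $J$-invariant $2$-forms are precisely the real $(1,1)$-forms, i.e. $\Omega_J^{2,+}(M)=\Omega_J^{1,1}(M)_{\mathbb R}$, the form $\omega$ lies in $\mathbf Z_J^{1,1}$, and hence $[\omega]\in H^{1,1}_J(M)_{\mathbb R}$ by Definition \ref{new}. This proves $\mathcal K_J^c\subseteq H^{1,1}_J(M)_{\mathbb R}$.

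For openness, the key point I would exploit is that $H^{1,1}_J(M)_{\mathbb R}$ is finite-dimensional, being a subspace of $H^2(M;\mathbb R)$. Fix a class $\mathfrak a=[\omega]\in\mathcal K_J^c$ with $\omega$ compatible, and choose closed real $(1,1)$-forms $\eta_1,\dots,\eta_k$ whose classes form a basis of $H^{1,1}_J(M)_{\mathbb R}$. Every class near $\mathfrak a$ in this subspace can then be written as $\mathfrak a+\sum_i t_i[\eta_i]$ for small real parameters $t_i$, and is represented by the closed real $(1,1)$-form $\omega_t:=\omega+\sum_i t_i\eta_i$, which is $J$-invariant since each summand is. It remains to show that $\omega_t$ tames $J$ for $t$ small.

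Here I would use compactness. After fixing an auxiliary metric, the condition that $\omega$ tames $J$, namely $\omega(v,Jv)>0$ for all nonzero $v$, is equivalent to positivity of the continuous function $v\mapsto\omega(v,Jv)$ on the compact unit tangent bundle, so this function is bounded below by some $\varepsilon>0$. The perturbation contributes $\sum_i t_i\,\eta_i(v,Jv)$, bounded in absolute value by $C\max_i|t_i|$ for a constant $C$ depending only on the $C^0$-norms of the $\eta_i$. Thus for $\max_i|t_i|<\varepsilon/C$ the form $\omega_t$ remains positive on every $J$-line, hence tames $J$; being also $J$-invariant it is compatible with $J$, and its class lies in $\mathcal K_J^c$. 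This exhibits a full neighborhood of $\mathfrak a$ inside $\mathcal K_J^c$, proving openness.

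I expect the only real subtlety to be the passage from a small cohomology class to a uniformly $C^0$-small form representative; the finite-dimensionality of $H^{1,1}_J(M)_{\mathbb R}$ is exactly what makes this automatic, since a fixed finite basis of representatives lets me parametrize the nearby classes by forms depending linearly on finitely many parameters. The remaining ingredients, that $J$-invariance of $2$-forms coincides with being of type $(1,1)$ and that tameness is an open condition on a compact manifold, are standard.
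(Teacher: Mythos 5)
Your proof is correct and is exactly the argument the authors intend: the paper offers no written proof (the lemma is prefaced only by ``It is simple to observe''), and your two steps --- that a compatible form is a closed $J$-invariant, hence real $(1,1)$, form, and that a finite basis of closed $(1,1)$-representatives plus compactness of the unit tangent bundle makes tameness persist under small perturbations preserving $J$-invariance --- supply precisely the standard details being suppressed. No gaps.
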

 Thus it is crucial to understand $ H^{1,1}_J(M)_{\mathbb R}$.
Notice that a $J-$tamed symplectic form is a closed smooth $2-$form
whose $(1,1)$ component is positive definite, but not necessarily
closed.

\begin{remark} \label{notation} Notice that $\frac{\mathbf Z_J^{S}}{\mathbf B}=
\frac{\mathbf Z_J^{S}}{\mathbf B\cap \mathbf Z^J_{S} }=\frac{\mathbf
Z_J^{S}}{\mathbf B_J^{S}}$. Thus there is a natural inclusion
$$\rho_{S}:\frac{\mathbf
Z_J^{S}}{\mathbf B_J^S}\longrightarrow \frac{\mathbf Z}{\mathbf
B},$$ and $$H_J^S(M)_\mathbb{R}=\{[\alpha]\in H_*(M;\mathbb
R)|\alpha\in \mathbf Z_J^{S}\}.$$
 By abusing notation we will  write $\rho_S(\frac{\mathbf
Z_J^S}{\mathbf B_J^S})\subset \frac{\mathbf Z}{\mathbf B}$ simply as
$\frac{\mathbf Z_J^S}{\mathbf B_J^S}$. If there is no confusion we
will also write $\Omega^S$ for $\Omega_J^S$, $\mathbf Z^S$ for
$\mathbf Z_J^S$ and $\mathbf B^S$ for $\mathbf B_J^S$.
\end{remark}

With the notation convention in the above remark, in particular,  we
have
$$\frac{\mathbf Z^2}{\mathbf B^2} \supset \frac{\mathbf
Z^{1,1}}{\mathbf B^{1,1}}+ \frac{\mathbf Z^{(2,0), (0,2)}}{\mathbf
B^{(2,0), (0,2)}},$$ i.e. $H^2(M;\mathbb R)\supset
H_J^{1,1}(M)_\mathbb{R}+H_J^{(2,0), (0,2)}(M)_\mathbb{R}$.

In this subsection we study when the type decomposition holds for
$H^2(M;\mathbb{R})$, i.e.
$$H^2(M;\mathbb R)=H_J^{1,1}(M)_\mathbb{R}\oplus H_J^{(2,0), (0,2)}(M)_\mathbb{R}.$$

\begin{definition}\label{p}
 $J$ is said to be $C^{\infty}$  pure
 if $$\frac{\mathbf Z^{1,1}}{\mathbf
B^{1,1}}\cap \frac{\mathbf Z^{(2,0), (0,2)}}{\mathbf B^{(2,0),
(0,2)}}=0.$$
\end{definition}

\begin{definition}\label{f}
 $J$ is said to be $C^{\infty}$ full if
$$\frac{\mathbf Z}{\mathbf B} = \frac{\mathbf Z^{1,1}}{\mathbf B^{1,1}}+
\frac{\mathbf Z^{(2,0), (0,2)}}{\mathbf B^{(2,0), (0,2)}}.$$

\end{definition}

It is immediate from Definition \ref{p} and \ref{f}  that we have

\begin{lemma}\label{fptd}
 $J$ is  $C^{\infty}$ pure and full if and only if  we have the type decomposition
\begin{equation}\label{type3}
\begin{array}{ll}
 H^2(M;\mathbb R)&=H_J^{1,1}(M)_{\mathbb R}\oplus
H_J^{(2,0), (2,0)}(M)_{\mathbb R}.
\end{array}
\end{equation}
\end{lemma}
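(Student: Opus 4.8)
The plan is to recognize that this statement is nothing more than the standard characterization of an internal direct sum of two subspaces, applied to the ambient space $H^2(M;\mathbb R)=\mathbf Z/\mathbf B$ together with the two subspaces $H_J^{1,1}(M)_{\mathbb R}=\frac{\mathbf Z^{1,1}}{\mathbf B^{1,1}}$ and $H_J^{(2,0),(0,2)}(M)_{\mathbb R}=\frac{\mathbf Z^{(2,0),(0,2)}}{\mathbf B^{(2,0),(0,2)}}$. Here I would invoke the convention of Remark \ref{notation} to regard both summands as genuine subspaces of $H^2(M;\mathbb R)$ via the inclusions $\rho_{1,1}$ and $\rho_{(2,0),(0,2)}$, so that the symbols $+$, $\cap$ and $\oplus$ all refer to operations taking place inside the single vector space $\mathbf Z/\mathbf B$.

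First I would recall the elementary fact that, for subspaces $A,B$ of a vector space $V$, one has $V=A\oplus B$ precisely when both the spanning condition $V=A+B$ and the independence condition $A\cap B=\{0\}$ hold. I would then match each of these with one of the two defining properties. By Definition \ref{f}, fullness of $J$ is exactly the assertion $\frac{\mathbf Z}{\mathbf B}=\frac{\mathbf Z^{1,1}}{\mathbf B^{1,1}}+\frac{\mathbf Z^{(2,0),(0,2)}}{\mathbf B^{(2,0),(0,2)}}$, i.e.\ $V=A+B$. By Definition \ref{p}, pureness of $J$ is exactly $\frac{\mathbf Z^{1,1}}{\mathbf B^{1,1}}\cap\frac{\mathbf Z^{(2,0),(0,2)}}{\mathbf B^{(2,0),(0,2)}}=0$, i.e.\ $A\cap B=\{0\}$.

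Combining the two, $J$ being $C^{\infty}$ pure and full is equivalent to the simultaneous validity of $V=A+B$ and $A\cap B=\{0\}$, which in turn is equivalent to the internal direct sum decomposition \eqref{type3}. This yields both implications of the biconditional at once. I do not expect any genuine obstacle here, since the argument is purely formal; the only point requiring a moment's care, and which Remark \ref{notation} has already settled, is that the two quotient groups must be interpreted as subspaces of the common space $\mathbf Z/\mathbf B$ through $\rho_{1,1}$ and $\rho_{(2,0),(0,2)}$, so that their sum and intersection are well defined and the notion of a direct sum inside $H^2(M;\mathbb R)$ makes sense.
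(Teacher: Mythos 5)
Your proposal is correct and matches the paper exactly: the paper states that the lemma is immediate from Definitions \ref{p} and \ref{f}, and your argument simply spells out that observation, identifying pureness with the trivial-intersection condition and fullness with the spanning condition for an internal direct sum inside $\mathbf Z/\mathbf B$, with the summands viewed as subspaces via the convention of Remark \ref{notation}.
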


Let $\pi^{S}:\Omega ^2(M)\to \Omega^{S}(M)_{\mathbb R}$ be the
natural projection. Notice that $\mathbf B^{S}=d\Omega^1(M)\cap
\Omega^{S}(M)_{\mathbb R}$ is a proper subspace of $\pi^{S}\mathbf
 B$.
 In particular,  $\pi^{1,1}\mathbf B$ is
the subspace of  $(1,1)-$forms which are
 components of exact forms and $\pi^{1,1}\mathbf Z$ is the subspace of  $(1,1)-$forms
 which are
 components of closed forms.

It is important to understand the quotient spaces
$$\frac{\pi^{1,1}\mathbf Z}{\pi^{1,1}\mathbf B},\quad
\frac{\pi^{(2,0), (0,2)}\mathbf Z}{\pi^{(2,0), (0,2)}\mathbf B}.$$
 Since $\mathbf
Z^{1,1}\subset \pi^{1,1}\mathbf Z$ and $\mathbf B^{1,1}\subset
\pi^{1,1}\mathbf B$, there is a natural homomorphism
$$\iota^{1,1}:\frac{\mathbf Z^{1,1}}{\mathbf
B^{1,1}}\longrightarrow \frac{\pi^{1,1}\mathbf Z}{\pi^{1,1}\mathbf
B}.$$ Similarly there is a homomorphism
$\iota^{(2,0),(0,2)}:\frac{\mathbf Z^{(2,0), (0,2)}}{\mathbf
B^{(2,0), (0,2)}}\longrightarrow \frac{\pi^{(2,0), (0,2)}\mathbf
Z}{\pi^{(2,0), (0,2)}\mathbf B}$.

Let $W\subset \mathbf Z^2$ be a subspace lifting $H^2(M;\mathbb R)$.
As mentioned $\mathbf B^2$ is a closed subspace of $\mathbf Z^2$,
and $W$ is also a closed subspace of $\mathbf Z^2$ since it is of
finite dimensional. So we have a direct sum decomposition
$$\mathbf Z^2=\mathbf B^2\oplus W.$$
In turn, it gives rise to decompositions
\begin{equation}\label{sum}\begin{array}{llll} \pi^{1,1}\mathbf Z&=\pi^{1,1}\mathbf
B&+&\pi^{1,1}W,\cr \pi^{(2,0), (0,2)}\mathbf Z&=\pi^{(2,0),
(0,2)}\mathbf B&+&\pi^{(2,0), (0,2)}W.
\end{array}
\end{equation}

Let  $\overline{\pi^{1,1}\mathbf B}$ be the closure of
$\pi^{1,1}\mathbf B$ in $\Omega^{1,1}(M)_{\mathbb R}$. Notice that
$\Omega^{1,1}(M)_{\mathbb R}$ is closed in $\Omega^2(M)$. So
$\overline{\pi^{1,1}\mathbf B}$ is the closure of $\pi^{1,1}\mathbf
B$ in $ \Omega^{2}(M)$. Similarly let $\overline{\pi^{1,1}\mathbf
Z}$ be the closure of $\pi^{1,1}\mathbf Z$ in
$\Omega^{1,1}(M)_{\mathbb R}$.

There is  the natural homomorphism
$$\phi^{1,1}: \frac{\pi^{1,1}\mathbf Z}{\pi^{1,1}\mathbf
B}\to \frac{\overline{\pi^{1,1}\mathbf
Z}}{\overline{\pi^{1,1}\mathbf B}}.$$ Similarly there is the
homomorphism $\phi^{(2,0), (0,2)}$ for $(2,0), (0,2)$ forms.

\begin{lemma}\label{schaefer}
$\phi^{1,1}$ and $\phi^{(2,0), (0,2)}$ are   surjective.
\end{lemma}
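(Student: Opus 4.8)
The plan is to show that every class in the target quotient $\frac{\overline{\pi^{1,1}\mathbf Z}}{\overline{\pi^{1,1}\mathbf B}}$ already admits a representative lying in the finite-dimensional subspace $\pi^{1,1}W\subset\pi^{1,1}\mathbf Z$. Since $\phi^{1,1}$ is induced by the inclusions $\pi^{1,1}\mathbf Z\hookrightarrow\overline{\pi^{1,1}\mathbf Z}$ and $\pi^{1,1}\mathbf B\hookrightarrow\overline{\pi^{1,1}\mathbf B}$, producing such a representative for each class is exactly what surjectivity requires. The starting point is the first line of the decomposition \eqref{sum}, $\pi^{1,1}\mathbf Z=\pi^{1,1}\mathbf B+\pi^{1,1}W$; because $W\subset\mathbf Z^2$ is finite-dimensional, its image $\pi^{1,1}W$ is a finite-dimensional subspace sitting inside $\pi^{1,1}\mathbf Z$.

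The essential ingredient is the following fact from the theory of topological vector spaces (cf. Schaefer): if $A$ is a subspace of a Hausdorff topological vector space and $F$ is a finite-dimensional subspace, then $\overline{A+F}=\overline A+F$. Indeed, $\overline A+F$ is closed: passing to the quotient map $q\colon\Omega^2(M)\to\Omega^2(M)/\overline A$ (a Hausdorff space since $\overline A$ is closed), the image $q(F)$ is finite-dimensional and hence closed, and $\overline A+F=q^{-1}(q(F))$ is therefore closed. This closed set contains $A+F$, so $\overline{A+F}\subset\overline A+F$; the reverse inclusion follows by approximating each element of $\overline A$ by a net in $A$. Applying this with $A=\pi^{1,1}\mathbf B$ and $F=\pi^{1,1}W$, and taking closures inside the closed subspace $\Omega^{1,1}_J(M)_{\mathbb R}\subset\Omega^2(M)$, I obtain
$$\overline{\pi^{1,1}\mathbf Z}=\overline{\pi^{1,1}\mathbf B+\pi^{1,1}W}=\overline{\pi^{1,1}\mathbf B}+\pi^{1,1}W.$$

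Surjectivity is then immediate: given $\beta\in\overline{\pi^{1,1}\mathbf Z}$, the displayed equality lets me write $\beta=b+w$ with $b\in\overline{\pi^{1,1}\mathbf B}$ and $w\in\pi^{1,1}W\subset\pi^{1,1}\mathbf Z$, whence $\beta-w\in\overline{\pi^{1,1}\mathbf B}$ and $\phi^{1,1}([w])=[\beta]$. The argument for $\phi^{(2,0),(0,2)}$ is verbatim the same, using the second line of \eqref{sum} and the closedness of $\Omega^{(2,0),(0,2)}_J(M)_{\mathbb R}$ in $\Omega^2(M)$. I expect the only genuine subtlety to be the topological vector space lemma on the stability of closedness under the addition of a finite-dimensional subspace; once that is in hand, everything reduces to formal bookkeeping with the finite-dimensional lift $W$.
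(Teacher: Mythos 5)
Your proof is correct and follows essentially the same route as the paper's: both arguments hinge on the finite-dimensionality of $\pi^{1,1}W$ together with the fact that a closed subspace plus a finite-dimensional subspace of a topological vector space is closed (proved via the same Hausdorff-quotient argument), yielding $\overline{\pi^{1,1}\mathbf Z}=\overline{\pi^{1,1}\mathbf B}+\pi^{1,1}W$. The only cosmetic difference is that the paper first replaces $\pi^{1,1}W$ by a subspace $\tilde W^1$ mapping isomorphically onto $\pi^{1,1}\mathbf Z/\pi^{1,1}\mathbf B$, whereas you work with $\pi^{1,1}W$ directly, which is equally valid.
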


\begin{proof}
Notice that $\pi^{1,1}W$ is  a  finite dimensional space. We use the
fact on p22 in \cite{Sch}:

\begin{lemma}\label{tvs} Let $V$ be a topological vector space over $\mathbb R$. If $Q$
is a closed subspace of $V$ and $P$ is a finite dimensional subspace
of $V$, then $Q+P$ is closed in $V$.
\end{lemma}

\begin{proof}Let $t:V\to V/Q$ be the quotient map. Since $Q$ is closed, the
quotient $V/ Q$ is Hausdorff. Thus $t(P)$ is also Hausdorff.
 Since $t(P)$  is also finite dimensional subspace of
$V/ Q$, as a topological space $t(P)$
is isomorphic to $\mathbb R^k$ for some integer $k$ by Theorem 3.2 in \cite{Sch}. Thus $t(P)$ is
complete in the sense that each Cauchy filter converges. Therefore
$t(P)$ is closed in $V/ Q$. (For general $P$, even $P$ is closed in
$V$, it is not necessary that the image $t(P)$ is closed in $V/Q$).
Finally, since $t$ is continuous, $Q+P=t^{-1}(t(P))$ is a closed
subspace of $V$.
\end{proof}

Let $\tilde W^1\subset \pi^{1,1}W$ be a subspace mapping
isomorphically to $\frac{\pi^{1,1}\mathbf Z}{\pi^{1,1}\mathbf B}$,
and similarly define $\tilde W^2\subset \pi^{(2,0), (0,2)}W$. Then
we still have \eqref{sum} with $\tilde W^1$ and $\tilde W^2$.

Apply Lemma \ref{tvs} to the case
$$V=\Omega^{1,1}(M)_{\mathbb R}\,\,\,\,
(\Omega^{(2,0), (0,2)}(M)_{\mathbb R}), \quad
Q=\overline{\pi^{1,1}\mathbf B}\,\,\,\, (\overline{\pi^{(2,0),
(0,2)}\mathbf B}), \quad P=\tilde W^1\,\, (\tilde W^2) $$ to obtain
\begin{equation}\label{sumbar}\begin{array}{llll}
\overline{\pi^{1,1}\mathbf Z}&=\overline{\pi^{1,1}\mathbf
B}&+&\tilde W^1,\cr \overline{\pi^{(2,0), (0,2)}\mathbf
Z}&=\overline{\pi^{(2,0), (0,2)}\mathbf B}&+&\tilde W^2.
\end{array}
\end{equation}
Therefore $\tilde W^1$ ($\tilde W^2)$  projects surjectively onto
$$\frac{\overline{\pi^{1,1}\mathbf
Z}}{\overline{\pi^{1,1}\mathbf B}} \quad
(\frac{\overline{\pi^{(2,0), (0,2)}\mathbf Z}}{\overline{\pi^{(2,0),
(0,2)}\mathbf B}}).$$
\end{proof}

\begin{lemma}\label{45}
$J$ is $C^{\infty}$ pure if and only if
\begin{equation} \label{hl}\pi^{1,1}\mathbf B\cap \mathbf Z^{1,1}=\mathbf
B^{1,1},
\end{equation}
and (\ref{hl}) is equivalent to
\begin{equation} \label{hl2}\pi^{(2,0), (0,2)}\mathbf B\cap \mathbf Z^{(2,0), (0,2)}=\mathbf B^{(2,0),
(0,2)}.
\end{equation}
Consequently,  $J$ being  $C^{\infty}$ pure is equivalent to
$\iota^{1,1}$ being injective, which is also equivalent to
$\iota^{(2,0), (0,2)}$ being injective.
\end{lemma}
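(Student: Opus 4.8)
The plan is to reduce the whole statement to the computation of a single quotient vector space together with an elementary fact about the type decomposition $\Omega^2(M)=\Omega^{1,1}(M)_{\mathbb R}\oplus\Omega^{(2,0),(0,2)}(M)_{\mathbb R}$. Write $K=\pi^{1,1}\mathbf B\cap\mathbf Z^{1,1}$ for the space of closed $(1,1)$-forms that arise as the $(1,1)$-component of an exact form; by the trivial inclusion noted before the lemma, $\mathbf B^{1,1}\subseteq K$. Two things are then immediate from the definitions: condition \eqref{hl} says precisely that $K/\mathbf B^{1,1}=0$; and, since $\iota^{1,1}[\eta]=0$ means exactly $\eta\in\pi^{1,1}\mathbf B$ (with $\eta\in\mathbf Z^{1,1}$ automatic), one has $\ker\iota^{1,1}=K/\mathbf B^{1,1}$. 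Thus \eqref{hl} is equivalent to the injectivity of $\iota^{1,1}$ with no further work, and it remains only to connect $K/\mathbf B^{1,1}$ to purity.

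The heart of the matter is the natural map $K\to H^2(M;\mathbb R)$, $\eta\mapsto[\eta]$, which I claim descends to an isomorphism of $K/\mathbf B^{1,1}$ onto the intersection subspace $I:=\frac{\mathbf Z^{1,1}}{\mathbf B^{1,1}}\cap\frac{\mathbf Z^{(2,0),(0,2)}}{\mathbf B^{(2,0),(0,2)}}\subseteq H^2(M;\mathbb R)$. The key trick is that the type splitting is $d$-compatible in a weak sense: if $\eta\in K$, write $\eta=\pi^{1,1}(d\gamma)$ and $\nu=\pi^{(2,0),(0,2)}(d\gamma)$, so $d\gamma=\eta+\nu$; since $d\gamma$ and $\eta$ are closed, so is $\nu$, and $[\eta]=-[\nu]$ exhibits $[\eta]$ as a class with both a closed $(1,1)$-representative and a closed $(2,0)+(0,2)$-representative, i.e. $[\eta]\in I$. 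Conversely any $c\in I$ has representatives $\eta\in\mathbf Z^{1,1}$ and $\mu\in\mathbf Z^{(2,0),(0,2)}$ with $\eta-\mu=d\gamma\in\mathbf B$, and applying $\pi^{1,1}$ gives $\eta=\pi^{1,1}(d\gamma)$, so $\eta\in K$ maps to $c$; this yields surjectivity. Finally $\eta\in K$ maps to $0$ iff $\eta$ is exact, i.e. iff $\eta\in\mathbf B\cap\Omega^{1,1}(M)_{\mathbb R}=\mathbf B^{1,1}$, which identifies the kernel and gives $K/\mathbf B^{1,1}\cong I$. Hence purity ($I=0$) is equivalent to \eqref{hl} ($K/\mathbf B^{1,1}=0$).

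Running the identical argument with the two types interchanged shows that purity is equivalent to \eqref{hl2} and to the injectivity of $\iota^{(2,0),(0,2)}$, the relevant quotient now being $(\pi^{(2,0),(0,2)}\mathbf B\cap\mathbf Z^{(2,0),(0,2)})/\mathbf B^{(2,0),(0,2)}$. Since \eqref{hl} and \eqref{hl2} are each equivalent to purity, they are equivalent to one another, and the full chain of equivalences in the statement follows.

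I do not anticipate a genuine obstacle: the argument is just the linear algebra of the pointwise type decomposition combined with the single observation that, once a $2$-form is exact, its $(1,1)$-part is closed if and only if its $(2,0)+(0,2)$-part is, both being forced by closedness of the whole. The one place to stay honest is the abuse of notation from Remark \ref{notation}: throughout I regard $\frac{\mathbf Z^S}{\mathbf B^S}$ as its image under $\rho_S$ inside $H^2(M;\mathbb R)$, so that ``$I=0$'' is correctly read as the nonexistence of a nonzero class simultaneously representable by closed forms of the two pure types, rather than as a statement about the forms themselves.
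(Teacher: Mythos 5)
Your proof is correct and takes essentially the same route as the paper: the well-definedness and surjectivity of your map $K/\mathbf B^{1,1}\to I$ are exactly the paper's two implications, both resting on the decomposition $d\gamma=\pi^{1,1}d\gamma+\pi^{(2,0),(0,2)}d\gamma$ and the observation that closedness of one component of an exact form forces closedness of the other. The only cosmetic difference is that you obtain the equivalence of \eqref{hl} and \eqref{hl2} indirectly (each being equivalent to purity), whereas the paper deduces it directly from that same observation; both are valid.
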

\begin{proof} The equivalence between (\ref{hl}) and (\ref{hl2})
follows from that $\pi^{1,1}d\gamma$ is closed (exact) if and only
if $\pi^{(2,0), (0,2)}d\gamma$ is closed (exact).

Suppose (\ref{hl}) is true. We want to prove that $J$ is
$C^{\infty}$ pure, i.e. if $e\in \mathbf Z^{1,1}$, $f\in \mathbf
Z^{(2,0), (0,2)}$ and $[e]=[f]\in H^2(M;\mathbb R)$, then
$[e]=[f]=0$.

Since $e-f\in \mathbf B$, $\pi^{1,1}(e-f)=e\in \pi^{1,1}\mathbf B$.
As $e\in \mathbf Z^{1,1}$, we conclude that $e\in \mathbf B^{1,1}$
and hence $[e]=[f]=0$. This proves that $J$ is pure.

Conversely, suppose $J$ is pure. We need to show that if  $e\in
\mathbf Z^{1,1}$ and $e=\pi^{1,1}d\gamma$, then $[e]=0$.

Let $-f=d\gamma-e$. Then $df=0$ and thus $f\in  \mathbf Z^{(2,0),
(0,2)}$. Since $[e-f]=0$ we conclude that $[e]=0$.
\end{proof}

\begin{remark}
It is not clear we can replace $\pi^{1,1}\mathbf B$  in Lemmas
\ref{45} by its closure $\overline{\pi^{1,1}\mathbf B}$.
\end{remark}

Suppose $J$ is $C^{\infty}$ full. Then we can choose $W=W^1\oplus W^2$ with
  $W^1\subset  \mathbf Z^{1,1}$   a
subspace lifting $H^{1,1}_J(M)_{\mathbb R}$, and
$W^2 \subset \mathbf Z^{(2,0), (0,2)}$  a subspace lifting
$H^{(2,0), (0,2)}_J(M)_{\mathbb R}$.  Then $\pi^{1,1}W=W^1$ and $\pi^{(2,0), (0,2)}W=W^2$.
This proves

\begin{lemma}\label{iota'}
If $J$ is $C^{\infty}$ full then both $\iota^{1,1}$ and $\iota^{(2,0), (0,2)}$
are surjective. Consequently, if  $J$ is $C^{\infty}$ pure and full, then both
$\iota^{1,1}$ and $\iota^{(2,0), (0,2)}$ are isomorphisms.
\end{lemma}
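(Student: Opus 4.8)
The plan is to use fullness to replace the arbitrary lifting $W$ of $H^2(M;\mathbb R)$ chosen just before \eqref{sum} by one that respects the type splitting, and then to read surjectivity straight off \eqref{sum}.

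First I would build a \emph{type-adapted} lifting. By Definition \ref{f}, $C^{\infty}$ fullness means $H^2(M;\mathbb R)=H^{1,1}_J(M)_{\mathbb R}+H^{(2,0),(0,2)}_J(M)_{\mathbb R}$ as subspaces of $\mathbf Z/\mathbf B$. Since a sum of two finite-dimensional subspaces admits a basis drawn from the two summands, I would choose cohomology classes forming a basis of $H^2(M;\mathbb R)$ with each class represented either by a closed real $(1,1)$-form or by a closed real $(2,0)+(0,2)$-form; concretely, pick a basis of $H^{1,1}_J(M)_{\mathbb R}$ and extend it to a basis of $H^2(M;\mathbb R)$ by classes coming from $H^{(2,0),(0,2)}_J(M)_{\mathbb R}$. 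Letting $W^1\subset\mathbf Z^{1,1}$ be the span of the chosen $(1,1)$-representatives and $W^2\subset\mathbf Z^{(2,0),(0,2)}$ the span of the remaining ones, linear independence of the classes forces the representing forms to be linearly independent with no nonzero linear combination exact, so $W:=W^1\oplus W^2$ satisfies $\mathbf Z^2=\mathbf B^2\oplus W$ and is thus a legitimate choice of lifting for which \eqref{sum} holds.

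Next I would evaluate the projections on this $W$. Because every form in $\mathbf Z^{(2,0),(0,2)}\subset\Omega^{(2,0),(0,2)}(M)_{\mathbb R}$ has zero $(1,1)$-part, we get $\pi^{1,1}W^2=0$ and hence $\pi^{1,1}W=W^1\subset\mathbf Z^{1,1}$; symmetrically $\pi^{(2,0),(0,2)}W=W^2\subset\mathbf Z^{(2,0),(0,2)}$. Substituting into the first line of \eqref{sum} gives $\pi^{1,1}\mathbf Z=\pi^{1,1}\mathbf B+W^1\subset\pi^{1,1}\mathbf B+\mathbf Z^{1,1}\subset\pi^{1,1}\mathbf Z$, so all these spaces coincide; since the image of $\iota^{1,1}$ is precisely $(\mathbf Z^{1,1}+\pi^{1,1}\mathbf B)/\pi^{1,1}\mathbf B$, this is exactly surjectivity of $\iota^{1,1}$, and the second line of \eqref{sum} yields surjectivity of $\iota^{(2,0),(0,2)}$ in the same way. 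For the final clause I would invoke Lemma \ref{45}, which identifies $C^{\infty}$ purity with injectivity of $\iota^{1,1}$ (equivalently of $\iota^{(2,0),(0,2)}$); combined with the surjectivity just established, purity and fullness together make both maps isomorphisms.

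The one real step is the first paragraph: turning fullness into a type-split lifting $W=W^1\oplus W^2$ with $W^1\subset\mathbf Z^{1,1}$ and $W^2\subset\mathbf Z^{(2,0),(0,2)}$. Once that is in hand, the computation $\pi^{1,1}W^2=0$ makes the rest mechanical. I expect no analytic difficulty here—in contrast to Lemma \ref{schaefer}, no closures intervene—because $W^1$ and $W^2$ are finite-dimensional and already consist of closed forms of pure type.
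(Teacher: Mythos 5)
Your proposal is correct and follows essentially the same route as the paper: the paper's (very terse) argument is precisely to use fullness to choose a type-adapted lifting $W=W^1\oplus W^2$ with $W^1\subset\mathbf Z^{1,1}$ and $W^2\subset\mathbf Z^{(2,0),(0,2)}$, observe $\pi^{1,1}W=W^1$ and $\pi^{(2,0),(0,2)}W=W^2$, and read surjectivity off \eqref{sum}. Your extra care in building $W$ as a genuine direct complement of $\mathbf B^2$ (full basis of $H^{1,1}_J(M)_{\mathbb R}$ extended by $(2,0),(0,2)$ classes) is a welcome refinement of the paper's phrasing, which in the non-pure case would only give $\mathbf Z^2=\mathbf B^2+W$ rather than a direct sum, but either version suffices for the conclusion.
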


 Notice that when $J$ is integrable, there
is the Dolbeault complex and the associated Dolbeault cohomology
groups. But our groups  are subgroups of the De Rham homology and
cohomology groups, and might be different from the Dolbeault groups.
Nonetheless, the following identification is  made in \cite{DLZ}.

\begin{prop} \label{complex1}
Let $J$ be a complex structure on a $2n-$manifold. If $J$ is
K\"ahler or $n=2$ then $J$ is $C^{\infty}$ pure and full. Moreover,
\begin{equation}\label{iden}\begin{array}{ll}
H_J^{1,1}(M)_{\mathbb R}&=H_{\bar \partial}^{1,1}(M)_{\mathbb R},
\cr H_J^{(2,0), (0,2)}(M)_{\mathbb R}&=(H_{\bar
\partial}^{2,0}(M)\oplus H_{\bar \partial}^{0,2}(M))_{\mathbb R}.
\end{array}\end{equation}
\end{prop}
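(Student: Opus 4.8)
The plan is to treat the two hypotheses separately, since the available analytic tools differ, but to aim in both cases at a single structural statement: that $H^2(M;\mathbb C)$ carries a Hodge-type decomposition $H^{2,0}_{\bar\partial}\oplus H^{1,1}_{\bar\partial}\oplus H^{0,2}_{\bar\partial}$ in which the pure-type summands are represented by $d$-closed forms of pure type. Once such a decomposition is in hand, both the $C^{\infty}$ pure and full property and the identifications in \eqref{iden} follow by matching subspaces of $H^2(M;\mathbb R)$. I would first record the elementary form-level fact, valid for any complex structure, that a real closed $(1,1)$-form is automatically $\bar\partial$-closed: since $J$ is integrable, $d=\partial+\bar\partial$, and for a $(1,1)$-form $\alpha$ the identity $d\alpha=\partial\alpha+\bar\partial\alpha=0$ forces the independent $(2,1)$- and $(1,2)$-components $\partial\alpha$ and $\bar\partial\alpha$ to vanish separately. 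Thus, relative to the decomposition, a closed real $(1,1)$-form determines a Dolbeault class lying in the $H^{1,1}_{\bar\partial}$ summand, giving $H_J^{1,1}(M)_{\mathbb R}\subseteq H^{1,1}_{\bar\partial}(M)_{\mathbb R}$, and symmetrically for the $(2,0),(0,2)$ groups.

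For the K\"ahler case I would fix a K\"ahler metric and run ordinary Hodge theory. The K\"ahler identities give $\mathcal H^2_{\mathbb C}=\bigoplus_{p+q=2}\mathcal H^{p,q}$, every harmonic form is $d$-closed, and $\mathcal H^{p,q}\cong H^{p,q}_{\bar\partial}$. Decomposing the harmonic representative of a real class as $\gamma^{2,0}+\gamma^{1,1}+\gamma^{0,2}$ with each summand harmonic, reality forces $\gamma^{1,1}$ real and $\overline{\gamma^{2,0}}=\gamma^{0,2}$; hence $\gamma^{1,1}$ is a real closed $(1,1)$-form and $\gamma^{2,0}+\gamma^{0,2}$ a real closed $(2,0)+(0,2)$-form. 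This yields fullness at once, pureness because a class of both types has harmonic representative in $\mathcal H^{1,1}\cap(\mathcal H^{2,0}\oplus\mathcal H^{0,2})=0$, and the reverse inclusions completing \eqref{iden} because the harmonic $(1,1)$-forms are precisely the real Dolbeault $(1,1)$-classes.

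For the surface case ($n=2$) I would reduce to complex-surface theory. When $b_1$ is even the surface is K\"ahler by the criterion cited in the introduction, so the previous paragraph applies, and the only genuinely new input is the non-K\"ahler ($b_1$ odd) case. Here I would invoke two facts. First, the $C^{\infty}$ pure and full property is already known for \emph{every} almost complex structure on a compact $4$-manifold by \cite{DLZ} (quoted in the introduction), so $H^2(M;\mathbb R)=H_J^{1,1}(M)_{\mathbb R}\oplus H_J^{(2,0),(0,2)}(M)_{\mathbb R}$ with no further work. Second, on a complex surface a $(2,0)$-form $\eta$ satisfies $\partial\eta\in\Omega^{3,0}=0$, so $\eta$ is $d$-closed iff it is $\bar\partial$-closed, i.e. iff it is holomorphic; hence the real closed $(2,0)+(0,2)$-forms are exactly the combinations $\eta+\bar\eta$ of holomorphic $2$-forms, identifying $H_J^{(2,0),(0,2)}(M)_{\mathbb R}$ with $(H^{2,0}_{\bar\partial}\oplus H^{0,2}_{\bar\partial})(M)_{\mathbb R}$. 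Combining this with the Hodge-type decomposition of $H^2$ valid for all compact complex surfaces \cite{BPV} and the inclusion $H_J^{1,1}(M)_{\mathbb R}\subseteq H^{1,1}_{\bar\partial}(M)_{\mathbb R}$ from the first paragraph, a comparison of the two complementary decompositions of $H^2(M;\mathbb R)$ forces equality in the $(1,1)$ factor as well.

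The hard part will be exactly the non-K\"ahler surface case: without a K\"ahler metric one cannot diagonalize by harmonic projection, and the naive comparison map $\mathbf Z^{1,1}/\mathbf B^{1,1}\to H^{1,1}_{\bar\partial}$ need not even be well defined, because the $\partial\bar\partial$-lemma fails there. My plan sidesteps this by never using that quotient map directly: I pin down the $(2,0),(0,2)$ group by the clean ``closed $(2,0)$-form equals holomorphic $2$-form'' argument, import pureness and fullness from the general $4$-dimensional result of \cite{DLZ}, and recover the $(1,1)$ identification by a complementary-dimension comparison inside $H^2(M;\mathbb R)$. The single external ingredient I would still need to cite for surfaces is the decomposition $H^2(M;\mathbb C)=H^{2,0}_{\bar\partial}\oplus H^{1,1}_{\bar\partial}\oplus H^{0,2}_{\bar\partial}$ with $H^{2,0}_{\bar\partial}$ the closed holomorphic $2$-forms, which holds for every compact complex surface.
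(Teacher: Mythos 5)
Your proposal is essentially correct, but note that the paper itself offers no proof of this proposition: it is stated as an identification "made in \cite{DLZ}" and simply cited. So any honest argument is "different from the paper's," and yours is a reasonable reconstruction. The K\"ahler half is the standard Hodge-theoretic argument (bidegree-preserving harmonic projection gives fullness, uniqueness of the harmonic representative gives pureness and the identifications \eqref{iden}), and it is sound. For surfaces, the two genuinely load-bearing inputs you identify are the right ones: on a surface a $d$-closed $(2,0)$-form is exactly a holomorphic $2$-form (since $\Omega^{3,0}=0$), which pins down $H_J^{(2,0),(0,2)}(M)_{\mathbb R}=(H^{2,0}_{\bar\partial}\oplus H^{0,2}_{\bar\partial})_{\mathbb R}$ directly, and the decomposition $H^2(M;\mathbb C)=H^{2,0}_{\bar\partial}\oplus H^{1,1}_{\bar\partial}\oplus H^{0,2}_{\bar\partial}$ valid for every compact complex surface \cite{BPV}; your complementary-subspace comparison then forces the $(1,1)$ identification. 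Two small remarks. First, your appeal to the general $4$-dimensional $C^\infty$ pure-and-full theorem of \cite{DLZ} is dispensable and slightly awkward, since the proposition under discussion is itself attributed to \cite{DLZ}: once you have the \cite{BPV} decomposition and the identification of the $(2,0),(0,2)$ piece, fullness follows by averaging a closed $(1,1)$ representative with its conjugate, and pureness follows from the directness of the \cite{BPV} decomposition, so the surface case closes without importing the almost-complex result. Second, the inclusion $H_J^{1,1}(M)_{\mathbb R}\subseteq H^{1,1}_{\bar\partial}(M)_{\mathbb R}$ asserted in your first paragraph is only meaningful once $H^{1,1}_{\bar\partial}$ has been realized as a summand of $H^2(M;\mathbb C)$ (the quotient map $\mathbf Z^{1,1}/\mathbf B^{1,1}\to H^{1,1}_{\bar\partial}$ is indeed ill-defined in general, as you note); you use it only under the K\"ahler or surface hypotheses where that realization is available, so this is a matter of presentation rather than a gap.
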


\begin{remark}\label{last} It is interesting to investigate whether some non-integrable $J$ could
be $C^{\infty}$ pure and full.

It is shown in \cite{DLZ} and \cite{FT} that if $J$ is almost
K\"ahler, then it is $C^{\infty}$ pure. While in \cite{FT} a
left-invariant almost complex structure on a $6-$dimensional
nilmanifold is found to be not $C^{\infty}$ pure.

 In \cite{DLZ}  it is shown   that every almost
complex structure on a $4-$dimensional manifold is $C^{\infty}$ pure
and full. Some higher dimensional examples are also given in
\cite{FT}.

In addition, in dimension $4$, the groups $$H_J^{1,1}(M)_{\mathbb
R}, \quad H_J^{(2,0), (0,2)}(M)_{\mathbb R}$$ are determined in
\cite{DLZ} for any $J$ metric related to an integrable one (i.e.
when there is a metric $g$ which is compatible with $J$ and an
integrable almost complex structure at the same time), and their
dimensions are estimated for those $J$ tamed by symplectic forms.
\end{remark}

%%%%%%%%%%%%%%%%%%%%%%%%%%%%%%%%%%%%%%%%%%%%%%%%%%%%%%%%%%%%%%%%%%%%%%%

\subsubsection{Pure and full almost complex structures}

\begin{definition}\label{new}   If $S=(1,1)$ or $(2,0), (0,2)$,
define
$$H^J_S(M)_\mathbb{R}=\frac{\mathcal Z^J_S}{\mathcal B^J_S}.$$
 \end{definition}

We will adopt similar notation convention as in Remark
\ref{notation}.

\begin{definition}
$J$ is said to be pure if $$\frac{\mathcal Z_{1,1}}{\mathcal
B_{1,1}}\cap \frac{\mathcal Z_{(2,0), (0,2)}}{\mathcal B_{(2,0),
(0,2)}}=0.$$
 $J$ is said to be full if
$$\frac{\mathcal Z_2}{\mathcal B_2} = \frac{\mathcal Z_{1,1}}{\mathcal B_{1,1}}+
\frac{\mathcal Z_{(2,0), (0,2)}}{\mathcal B_{(2,0), (0,2)}}.$$
\end{definition}

Clearly, the analogue of Lemma \ref{fptd} still holds.
\begin{lemma}
 $J$ is  pure and full if and only if  we have the type decomposition
\begin{equation}\label{type}
\begin{array}{ll}
 H_2(M;\mathbb R)&=H^J_{1,1}(M)_{\mathbb R}\oplus
H^J_{(2,0), (2,0)}(M)_{\mathbb R}.
\end{array}
\end{equation}
\end{lemma}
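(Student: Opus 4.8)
The plan is to follow verbatim the argument that established Lemma \ref{fptd}, transporting it from the form complex to the current complex; the two statements are formally identical once the correct identifications are in place, so no new analytic input is needed. What must be checked carefully is only the preliminary bookkeeping, after which the equivalence becomes a tautology about subspaces of a vector space.

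First I would record the homological analogue of the identification in Remark \ref{notation}. Since every boundary current is closed, we have $\mathcal B \cap \mathcal Z^J_S = \mathcal B \cap \mathcal E^J_S(M)_{\mathbb R} = \mathcal B^J_S$ for $S=(1,1)$ and $S=(2,0),(0,2)$. Hence $\frac{\mathcal Z^J_S}{\mathcal B^J_S} = \frac{\mathcal Z^J_S}{\mathcal B \cap \mathcal Z^J_S}$, and the inclusion $\mathcal Z^J_S \hookrightarrow \mathcal Z_2$ descends to an injection $\rho_S : H^J_S(M)_{\mathbb R} \to \frac{\mathcal Z_2}{\mathcal B_2} = H_2(M;\mathbb R)$. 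Adopting the same abuse of notation as before, I then regard $H^J_{1,1}(M)_{\mathbb R}$ and $H^J_{(2,0),(0,2)}(M)_{\mathbb R}$ as genuine subspaces of $H_2(M;\mathbb R)$, so that the two defining conditions for pure and full are both read inside the single ambient space $\frac{\mathcal Z_2}{\mathcal B_2}$.

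With this in hand the equivalence unwinds directly from the definitions. By the definition of fullness, $J$ is full precisely when $H^J_{1,1}(M)_{\mathbb R} + H^J_{(2,0),(0,2)}(M)_{\mathbb R} = H_2(M;\mathbb R)$, i.e. the two subspaces span; by the definition of pureness, $J$ is pure precisely when $H^J_{1,1}(M)_{\mathbb R} \cap H^J_{(2,0),(0,2)}(M)_{\mathbb R} = 0$. A sum of two subspaces of a vector space is direct if and only if their intersection is trivial, and it exhausts the ambient space if and only if the two subspaces span. Therefore $J$ is simultaneously pure and full exactly when $H_2(M;\mathbb R) = H^J_{1,1}(M)_{\mathbb R} \oplus H^J_{(2,0),(0,2)}(M)_{\mathbb R}$, which is the decomposition \eqref{type}; the converse reading of a direct sum yields fullness (the subspaces span) and pureness (their intersection vanishes) simultaneously.

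I expect no genuine obstacle here: the only point demanding attention is the identification in the first step, ensuring that $\mathcal B^J_S$ really coincides with $\mathcal B \cap \mathcal Z^J_S$ so that each $H^J_S(M)_{\mathbb R}$ embeds into $H_2(M;\mathbb R)$. This is immediate from the fact that boundaries are closed, exactly as in the form case treated in Lemma \ref{fptd}, and everything else is the routine linear algebra of unwinding the definitions of pure and full.
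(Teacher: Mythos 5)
Your proposal is correct and matches the paper's treatment: the paper offers no separate argument, simply noting that the lemma is the current-complex analogue of Lemma \ref{fptd}, which is itself declared immediate from the definitions of pure and full once the subgroups are identified (via the homological version of Remark \ref{notation}, exactly your first step) as subspaces of $H_2(M;\mathbb R)$. Your careful check that $\mathcal B^J_S = \mathcal B\cap\mathcal Z^J_S$, so that $\rho_S$ is injective, is precisely the bookkeeping the paper leaves implicit.
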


  One can follow
the proof of Proposition \ref{complex1} in \cite{DLZ} and
 work with dual complex and dual operations to prove (see also
 Remark \ref{ft})

\begin{prop}\label{complex3}
Let $J$ be a complex structure on a $2n-$manifold. If $J$ is
K\"ahler or $n=2$, then it  is pure and full.
\end{prop}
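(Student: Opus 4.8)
The plan is to mirror the proof of Proposition \ref{complex1}, transporting every step through the duality between forms and currents. Recall that $\mathcal E_2(M)$ is the topological dual of $\Omega^2(M)$ and that the $J$-action on $2$-currents is the transpose of the involution on $\Omega^2(M)$; hence the splitting $\mathcal E_2(M)=\mathcal E^J_{1,1}(M)_{\mathbb R}\oplus \mathcal E^J_{(2,0),(0,2)}(M)_{\mathbb R}$ is exactly dual to $\Omega^2(M)=\Omega^{1,1}_J(M)_{\mathbb R}\oplus\Omega^{(2,0),(0,2)}_J(M)_{\mathbb R}$. Via the isomorphism of the second De Rham homology with $H^{2n-2}(M;\mathbb R)$, a closed $2$-current is homologous to a smooth $(2n-2)$-form $\beta$, and a short bidegree count on $\int_M\beta\wedge\alpha$ shows that the current attached to $\beta$ is of type $(1,1)$ precisely when $\beta$ is of type $(n-1,n-1)$, and of type $(2,0),(0,2)$ precisely when $\beta$ is of type $(n-2,n)$ plus $(n,n-2)$. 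Thus the homology type decomposition \eqref{type} is the Poincar\'e dual of the form-type decomposition of $\Omega^{2n-2}(M)$ into its $(n-1,n-1)$ part and its $(n-2,n)+(n,n-2)$ part, and it suffices to establish the latter.

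For the K\"ahler case I would fix a K\"ahler metric and run Hodge theory in degree $2n-2$. By de Rham's regularization theorem each De Rham homology class has a smooth, indeed harmonic, representative, so one may argue with harmonic $(2n-2)$-forms. The K\"ahler identities make the Laplacian commute with the projections onto bidegrees, so every harmonic $(2n-2)$-form splits as a sum of harmonic forms of pure bidegree; the $(n-1,n-1)$ piece gives a closed $(1,1)$-current and the $(n-2,n)+(n,n-2)$ piece a closed $(2,0),(0,2)$-current. This yields fullness. Pureness then follows from uniqueness of the harmonic representative: if a class is represented both by a closed $(1,1)$-current and by a closed $(2,0),(0,2)$-current, its harmonic representative is simultaneously of bidegree $(n-1,n-1)$ and of bidegree $(n-2,n)+(n,n-2)$, hence zero. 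Concretely this is the dual of the analogue of Proposition \ref{complex1} in degree $2n-2$, where for a K\"ahler metric the pure-and-full property holds in every degree by Hodge theory.

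For $n=2$ the degree $2n-2$ equals $2$, so the reduction above identifies the homology type decomposition of $2$-currents with the cohomology type decomposition of $2$-forms that is exactly the content of Proposition \ref{complex1} for complex surfaces. I would therefore take a closed $2$-current, replace it by a homologous smooth $2$-form via regularization, invoke the $C^{\infty}$ pure and full property of Proposition \ref{complex1}, and read the resulting splitting back as a decomposition of currents through the type identification above; pureness transfers the same way. Equivalently, one reruns the argument of \cite{DLZ} verbatim with the dual operators $\partial,\bar\partial$ acting on currents in place of those on forms.

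The main obstacle I anticipate is not the linear algebra of types but the analytic step of legitimising Hodge theory and bidegree projections on the space of currents: one must know that harmonic currents are smooth and that the bidegree projections descend to De Rham (co)homology, i.e. that they respect closedness and exactness. This is handled by de Rham regularization together with the K\"ahler identities in the K\"ahler case, and by the surface-specific harmonic analysis of \cite{DLZ} (dualised) when $n=2$; verifying that the transpose operators genuinely reproduce the Dolbeault-type decomposition on currents is the delicate point that the phrase ``work with dual complex and dual operations'' is meant to cover.
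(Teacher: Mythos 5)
Your proposal follows essentially the same route as the paper, whose entire proof is the one-line instruction to rerun the argument for Proposition \ref{complex1} from \cite{DLZ} with dual complexes and dual operators; your Hodge-theoretic dualization in degree $2n-2$ for the K\"ahler case and the transfer through the bidimension/bidegree correspondence for $n=2$ are exactly what that instruction unpacks to. The one step where ``pureness transfers the same way'' is too quick is the $n=2$ pureness claim: de Rham regularization of a closed bidimension-$(1,1)$ current does not produce a smooth form of pure type, so you cannot directly invoke $C^{\infty}$ pureness of the regularized representatives; the clean fix, which the paper supplies in Proposition \ref{simple} and Remark \ref{ft}, is the pairing argument --- a class in $H^J_{1,1}(M)_{\mathbb R}\cap H^J_{(2,0),(0,2)}(M)_{\mathbb R}$ pairs trivially with $H^{1,1}_J(M)_{\mathbb R}+H^{(2,0),(0,2)}_J(M)_{\mathbb R}$, which equals all of $H^2(M;\mathbb R)$ by $C^{\infty}$ fullness, hence vanishes by non-degeneracy of the $H_2\times H^2$ pairing.
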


%There is an alternative way to see  that $J$ is pure and full
%following  Theorem 3.7 in \cite{FT}.  We know that such a $J$ is
%$C^{\infty}$ pure and full  from Proposition \ref{complex1}. First
%of all, a $C^{\infty}$ full
% $J$ is pure.  For any class
%$C\in H_2(M, \mathbb R)$, we can find a harmonic $(2n-2)-$form $c$
%whose cohomology class $[c]$ is Poincar\'e  dual to $C$. The
%$2-$form $\theta=*c$ is closed, and when $J$ is K\"ahler,
%$\theta=\theta_1+\theta_2$ with both $\theta_1\in \mathbf Z^{1,1}$
%and $\theta_2 \in \mathbf Z^{(2,0), (0,2)}$ harmonic.   Then
%$c_1=*\theta_1$ and $c_2=*\theta_2$ can be viewed as elements in
%$\mathcal Z_{1,1}$ and $\mathcal Z_{(2,0), (0,2)}$ respectively.
%Thus $J$ is full as well.

%  If
% $n=2$, as the elements in $\mathbf Z^{1,1}$ ($\mathbf Z^{(2,0),
% (0,2)}$) can be viewed as elements in $\mathcal Z_{1,1}$ ($\mathcal
% Z_{(2,0), (0,2)}$),  $J$ being $C^{\infty}$ pure and full is
% actually equivalent to it being pure and full.

 Let $\pi_{S}:\mathcal
E_2(M)\to \mathcal E_{S}(M)_{\mathbb R}$ be the natural projection.
Notice that $\mathcal B_{S}=d\mathcal E_3(M)\cap \mathcal
E_{S}(M)_{\mathbb R}$ is a proper subspace of $\pi_{S}\mathcal
 B$.
 In particular,  $\pi_{1,1}\mathcal B$ is
the subspace of bidimension $(1,1)$ currents which are
 components of boundaries and $\pi_{1,1}\mathcal Z$ is the subspace of bidimension $(1,1)$
currents which are
 components of cycles.

%\begin{lemma}\label{45}
%$J$ is pure if and only if
%\begin{equation} \label{hl}\pi_{1,1}\mathcal B\cap \mathcal Z_{1,1}=\mathcal
%B_{1,1},
%\end{equation}
%and (\ref{hl}) is equivalent to
%\begin{equation} \label{hl2}\pi_{(2,0), (0,2)}\mathcal B\cap \mathcal Z_{(2,0), (0,2)}=\mathcal B_{(2,0),
%(0,2)}.
%\end{equation}
%\end{lemma}
%\begin{proof} The equivalence between (\ref{hl}) and (\ref{hl2})
%follows from that $\pi_{1,1}d\gamma$ is closed (a boundary) if and
%only  if $\pi_{(2,0), (0,2)}d\gamma$ is closed (a boundary).
%Suppose (\ref{hl}) is true. We want to prove that $J$ is pure, i.e.
%if $e\in \mathcal Z_{1,1}$, $f\in \mathcal Z_{(2,0), (0,2)}$ and
%$[e]=[f]\in H_2(M;\mathbb R)$, then $[e]=[f]=0$.
%Since $e-f\in \mathcal B$, $\pi_{1,1}(e-f)=e\in \pi_{1,1}\mathcal
%B$. As $e\in \mathcal Z_{1,1}$, we conclude that $e\in \mathcal
%B_{1,1}$ and hence $[e]=[f]=0$. This proves that $J$ is pure.
%Conversely, suppose $J$ is pure. We need to show that if  $e\in
%\mathcal Z_{1,1}$ and $e=\pi_{1,1}d\gamma$, then $[e]=0$.
%Let $-f=d\gamma-e$. Then $df=0$ and thus $f\in  \mathcal Z_{(2,0),
%(0,2)}$. Since $[e-f]=0$ we conclude that $[e]=0$.
%\end{proof}

It is also important to understand the quotient spaces
$$\frac{\pi_{1,1}\mathcal Z}{\pi_{1,1}\mathcal B},\quad
\frac{\pi_{(2,0), (0,2)}\mathcal Z}{\pi_{(2,0), (0,2)}\mathcal B}.$$
% Since $\mathcal
%Z_{1,1}\subset \pi_{1,1}\mathcal Z$ and $\mathcal B_{1,1}\subset
%\pi_{1,1}\mathcal B$,
Consider the  natural homomorphisms
$$\iota_{1,1}:\frac{\mathcal Z_{1,1}}{\mathcal
B_{1,1}}\longrightarrow \frac{\pi_{1,1}\mathcal Z}{\pi_{1,1}\mathcal
B},
\quad \iota_{(2,0),(0,2)}:\frac{\mathcal Z_{(2,0), (0,2)}}{\mathcal
B_{(2,0), (0,2)}}\longrightarrow \frac{\pi_{(2,0), (0,2)}\mathcal
Z}{\pi_{(2,0), (0,2)}\mathcal B}.$$

%Now Lemma \ref{45} can be
%rephrased as

\begin{lemma}\label{injective}
$J$ being  pure is equivalent to  $\iota_{1,1}$ being injective,
which is also equivalent to  $\iota_{(2,0), (0,2)}$ being injective.

If $J$ is full then both $\iota_{1,1}$ and $\iota_{(2,0), (0,2)}$
are surjective.

 Consequently, if  $J$ is pure and full, then both
$\iota_{1,1}$ and $\iota_{(2,0), (0,2)}$ are isomorphisms.

\end{lemma}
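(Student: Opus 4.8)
The plan is to dualize the proofs of Lemmas \ref{45} and \ref{iota'} verbatim, transporting each statement about forms into the corresponding statement about currents. The formal structure is identical: $(\mathcal E_*(M),d)$ is a complex carrying the type decomposition $\mathcal E_2(M)=\mathcal E^J_{1,1}(M)_{\mathbb R}\oplus \mathcal E^J_{(2,0),(0,2)}(M)_{\mathbb R}$, with projections $\pi_{1,1}$ and $\pi_{(2,0),(0,2)}$ and boundary subspaces $\mathcal B_S=d\mathcal E_3(M)\cap \mathcal E_S(M)_{\mathbb R}$, so every algebraic manipulation in the form case has a literal counterpart. First I would record the analogue of the identity driving Lemma \ref{45}: for a $3$-current $\gamma$, the component $\pi_{1,1}d\gamma$ is a cycle (respectively a boundary) if and only if $\pi_{(2,0),(0,2)}d\gamma$ is. Indeed $d\gamma=\pi_{1,1}d\gamma+\pi_{(2,0),(0,2)}d\gamma$ is already a boundary, so $d\pi_{1,1}d\gamma=-d\pi_{(2,0),(0,2)}d\gamma$ gives the cycle equivalence, while $\pi_{(2,0),(0,2)}d\gamma=d\gamma-\pi_{1,1}d\gamma$ gives the boundary equivalence. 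This establishes the equivalence of $\pi_{1,1}\mathcal B\cap\mathcal Z_{1,1}=\mathcal B_{1,1}$ and $\pi_{(2,0),(0,2)}\mathcal B\cap\mathcal Z_{(2,0),(0,2)}=\mathcal B_{(2,0),(0,2)}$.

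Next I would show that pureness is equivalent to the condition $\pi_{1,1}\mathcal B\cap\mathcal Z_{1,1}=\mathcal B_{1,1}$, which is in turn exactly the injectivity of $\iota_{1,1}$, since the kernel of $\iota_{1,1}$ is $(\pi_{1,1}\mathcal B\cap\mathcal Z_{1,1})/\mathcal B_{1,1}$. For one direction, given $e\in\mathcal Z_{1,1}$ and $f\in\mathcal Z_{(2,0),(0,2)}$ with $[e]=[f]$ in $H_2(M;\mathbb R)$, the current $e-f$ is a boundary and $\pi_{1,1}(e-f)=e$, so $e\in\pi_{1,1}\mathcal B\cap\mathcal Z_{1,1}=\mathcal B_{1,1}$ and hence $[e]=[f]=0$. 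Conversely, given $e=\pi_{1,1}d\gamma\in\mathcal Z_{1,1}$, set $f=e-d\gamma=-\pi_{(2,0),(0,2)}d\gamma\in\mathcal Z_{(2,0),(0,2)}$; then $[e]=[f]$, so pureness forces $[e]=0$, i.e. $e\in\mathcal B_{1,1}$. This is the line-by-line current translation of the proof of Lemma \ref{45}, and together with the first paragraph it gives the three equivalences in the first assertion.

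For the fullness half I would follow Lemma \ref{iota'}. Writing $\mathcal Z_2=\mathcal B_2\oplus W$ for a finite-dimensional $W$ lifting $H_2(M;\mathbb R)$, fullness lets me arrange $W=W^1\oplus W^2$ with $W^1\subset\mathcal Z_{1,1}$ and $W^2\subset\mathcal Z_{(2,0),(0,2)}$: choose a basis of $H_2(M;\mathbb R)$ each of whose vectors lies in the image of $H^J_{1,1}(M)_{\mathbb R}$ or of $H^J_{(2,0),(0,2)}(M)_{\mathbb R}$, and lift accordingly. Then $\pi_{1,1}W=W^1$, so for any $z=b+w^1+w^2\in\mathcal Z_2$ one has $\pi_{1,1}z\equiv w^1$ modulo $\pi_{1,1}\mathcal B$ with $w^1\in\mathcal Z_{1,1}$, whence $\iota_{1,1}$ is surjective; the argument for $\iota_{(2,0),(0,2)}$ is symmetric. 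Combining the two halves gives the isomorphism statement when $J$ is pure and full.

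The argument is routine once this dictionary is fixed; the only point deserving care — and hence the main (minor) obstacle — is verifying that the functional-analytic inputs survive passage to currents. Specifically one needs $\mathcal B_2$ and $\mathcal Z_2$ to be closed and a finite-dimensional complement $W$ to exist, so that the decompositions of $\pi_S\mathcal Z$ are legitimate. These hold because $\mathcal E_2(M)$ is Fr\'echet, $\mathcal B$ and $\mathcal Z$ are closed (Section 2.1, via de Rham's Theorem~$17'$), and finite-dimensional subspaces of a Hausdorff topological vector space are closed, providing the needed algebraic complement.
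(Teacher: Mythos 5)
Your proof is correct and matches the paper's intent exactly: the paper states this lemma without proof precisely because it is the verbatim current-space analogue of Lemmas \ref{45} and \ref{iota'}, and your dictionary (kernel of $\iota_{1,1}$ equals $(\pi_{1,1}\mathcal B\cap\mathcal Z_{1,1})/\mathcal B_{1,1}$, plus the splitting $\mathcal Z_2=\mathcal B_2\oplus W$ with $W$ of pure types) is the same argument. The closing functional-analytic caveat is harmless but not actually needed here, since the complement $W$ exists for purely algebraic reasons; closedness only matters for the $\phi$ maps of Lemma \ref{also}.
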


Of course we also have the homomorphisms $\phi_{1,1}$ and
$\phi_{(2,0), (0,2)}$, and

\begin{lemma}\label{also}
$\phi_{1,1}$ and $\phi_{(2,0), (0,2)}$ are surjective.
\end{lemma}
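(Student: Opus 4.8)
The plan is to run the argument of Lemma \ref{schaefer} in the dual setting, replacing the form complex by the current complex throughout. All the structural ingredients are already in hand: $\mathcal{E}_2(M)$ is a Fr\'echet space, the subspaces $\mathcal{Z}_2$ and $\mathcal{B}_2$ of closed and boundary $2$-currents are closed (the latter by the consequence of Theorem 17' of \cite{DR} recorded in Section 2.1), the splitting $\mathcal{E}_2(M)=\mathcal{E}^J_{1,1}(M)_{\mathbb R}\oplus\mathcal{E}^J_{(2,0),(0,2)}(M)_{\mathbb R}$ is a topological direct sum so that each summand is closed, and the De Rham homology $H_2(M;\mathbb R)$ is finite dimensional.

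First I would fix a finite-dimensional subspace $W\subset\mathcal{Z}_2$ lifting $H_2(M;\mathbb R)$. Since $W$ is finite dimensional it is closed, and because $\mathcal{B}_2$ is closed we obtain a topological direct sum $\mathcal{Z}_2=\mathcal{B}_2\oplus W$. Applying the projection $\pi_{1,1}$ yields $\pi_{1,1}\mathcal{Z}_2=\pi_{1,1}\mathcal{B}_2+\pi_{1,1}W$ with $\pi_{1,1}W$ finite dimensional; in particular the quotient $\pi_{1,1}\mathcal{Z}_2/\pi_{1,1}\mathcal{B}_2$ is finite dimensional, so I may select a finite-dimensional subspace $\tilde W_1\subset\pi_{1,1}W$ mapping isomorphically onto it, whence $\pi_{1,1}\mathcal{Z}_2=\pi_{1,1}\mathcal{B}_2+\tilde W_1$.

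Next I would pass to closures inside $\mathcal{E}^J_{1,1}(M)_{\mathbb R}$ (equivalently inside $\mathcal{E}_2(M)$, the summand being closed). Invoking Lemma \ref{tvs} with $V=\mathcal{E}^J_{1,1}(M)_{\mathbb R}$, $Q=\overline{\pi_{1,1}\mathcal{B}_2}$ and $P=\tilde W_1$, the sum $\overline{\pi_{1,1}\mathcal{B}_2}+\tilde W_1$ is closed. Being closed and sandwiched between $\pi_{1,1}\mathcal{Z}_2=\pi_{1,1}\mathcal{B}_2+\tilde W_1$ and $\overline{\pi_{1,1}\mathcal{Z}_2}$, it must equal $\overline{\pi_{1,1}\mathcal{Z}_2}$. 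Thus $\overline{\pi_{1,1}\mathcal{Z}_2}=\overline{\pi_{1,1}\mathcal{B}_2}+\tilde W_1$, so $\tilde W_1$ surjects onto $\overline{\pi_{1,1}\mathcal{Z}_2}/\overline{\pi_{1,1}\mathcal{B}_2}$. As $\tilde W_1$ already surjects onto $\pi_{1,1}\mathcal{Z}_2/\pi_{1,1}\mathcal{B}_2$ by construction, $\phi_{1,1}$ is surjective, and the identical argument with $\pi_{(2,0),(0,2)}$ handles $\phi_{(2,0),(0,2)}$.

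The proof is therefore routine once the dual framework is in place; the single point where the finiteness of the topology is used — and the only step requiring care — is the finite-dimensionality of $\pi_{1,1}W$, which is what licenses Lemma \ref{tvs}. I do not expect any genuinely new obstacle beyond checking that the closedness of $\mathcal{Z}_2$, $\mathcal{B}_2$ and of the summand $\mathcal{E}^J_{1,1}(M)_{\mathbb R}$, all established in Section 2.1, transfer the form-level argument to currents verbatim.
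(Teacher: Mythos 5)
Your proposal is correct and is exactly the paper's intended argument: Lemma \ref{also} is stated as following "by the same argument as" Lemma \ref{schaefer}, and you have carried out precisely that dualization, using the closedness of $\mathcal B_2$, the finite dimensionality of a lift $W$ of $H_2(M;\mathbb R)$, and Lemma \ref{tvs} applied to $\overline{\pi_{1,1}\mathcal B}+\tilde W_1$ inside the closed summand $\mathcal E^J_{1,1}(M)_{\mathbb R}$. Your explicit "sandwich" step justifying $\overline{\pi_{1,1}\mathcal Z}=\overline{\pi_{1,1}\mathcal B}+\tilde W_1$ is a slightly more careful rendering of the paper's equation \eqref{sumbar}, but the route is the same.
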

% Let $W_1\subset  \mathcal Z_{1,1}$  be a
%subspace lifting $H_{1,1}^J(M)_{\mathbb R}$, and similarly choose
%$W_2 \subset \mathcal Z_{(2,0), (0,2)}$ to be a subspace lifting
%$H_{(2,0), (0,2)}^J(M)_{\mathbb R}$. Suppose $J$ is full. Then
%$\mathcal Z_2=\mathcal B_2+W_1+W_2$, and hence
%\begin{equation}\label{sum}\begin{array}{llll} \pi_{1,1}\mathcal Z&=\pi_{1,1}\mathcal
%B&+&W_1,\cr \pi_{(2,0), (0,2)}\mathcal Z&=\pi_{(2,0), (0,2)}\mathcal
%B&+&W_2.
%\end{array}
%\end{equation}
%This proves

%\begin{remark}
%One can similarly formulate and prove the analogues of Lemmas
%\ref{45}, \ref{injective} and \ref{iota} for forms.
%\end{remark}

\subsubsection{Closed almost complex structures}

 To compare homological properties of $J$ for the  complexes of  currents
 and forms
we further introduce  the following condition.

\begin{definition} \label{closed} An almost complex structure $J$ is said to be closed
if $\pi_{1,1}\mathcal B$ is a  closed subspace of $\mathcal
E_{1,1}(M)_{\mathbb R}$.

%weakly closed

$J$ is said to be $C^{\infty}$ closed if the image of the operator
\begin{equation}\label{image} d:\Omega^{1,1}(M)_{\mathbb R}\to \Omega^3(M)
\end{equation} is a closed subspace of $ \Omega^{3}(M)$.
\end{definition}

Notice that  $\pi_{1,1}\mathcal B$ is the image of the operator
$$\pi_{1,1}d:\mathcal E_3(M)\to \mathcal
E_{1,1}(M)_{\mathbb R}.$$ The adjoint of this operator $\pi_{1,1}d$
can be easily computed to be the operator \eqref{image}. Recall that
the closed range theorem
 says if a linear map between Fr\'echet spaces has closed range then
its adjoint also has closed range. Thus we have

\begin{lemma}
$J$ is closed if and only if $J$ is $C^{\infty}$ closed.
\end{lemma}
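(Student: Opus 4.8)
The plan is to apply the closed range theorem, which is precisely the tool the statement of the lemma was set up to invoke. I would first identify the two relevant operators and check that they are adjoint to one another. Consider the boundary operator on currents restricted appropriately, namely
$$\pi_{1,1}d:\mathcal E_3(M)\to \mathcal E_{1,1}(M)_{\mathbb R},$$
whose image is exactly $\pi_{1,1}\mathcal B$ by the definition of the latter. The condition that $J$ is closed says precisely that this image is a closed subspace. On the other side, the exterior derivative restricted to $(1,1)$-forms,
$$d:\Omega^{1,1}(M)_{\mathbb R}\to \Omega^3(M),$$
has image a closed subspace exactly when $J$ is $C^{\infty}$ closed. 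So the entire lemma reduces to showing that these two operators are adjoints of each other with respect to the duality pairings already set up in the excerpt.

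The key computation is therefore the adjoint identity, which the text asserts can be done easily. Here I would use that $\Omega^*(M)$ and $\mathcal E_*(M)$ are mutually dual reflexive Fr\'echet spaces, that $d$ on forms and $d$ on currents are adjoint by the very construction of the current complex, and that the type projections $\pi_{1,1}$ on forms and $\pi_{1,1}$ on currents are adjoint because the decompositions $\Omega^2(M)=\Omega^{1,1}(M)_{\mathbb R}\oplus\Omega^{(2,0),(0,2)}(M)_{\mathbb R}$ and $\mathcal E_2(M)=\mathcal E_{1,1}(M)_{\mathbb R}\oplus\mathcal E_{(2,0),(0,2)}(M)_{\mathbb R}$ are dual pairings term by term, as noted earlier. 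Composing, the adjoint of $\pi_{1,1}d$ on $\mathcal E_3(M)$ is $d\,\pi_{1,1}$ on forms; one checks that on the $(1,1)$-part this agrees with the operator $d:\Omega^{1,1}(M)_{\mathbb R}\to\Omega^3(M)$, since a $3$-current pairs with a $2$-form only through that form's $(1,1)$ component under $\pi_{1,1}d$, forcing the adjoint to be evaluation of $d$ on $(1,1)$-forms.

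Once the adjoint identity is in place, I would invoke the closed range theorem for continuous linear maps between Fr\'echet spaces, which the excerpt states in the form needed: a map has closed range if and only if its adjoint has closed range. Applying this to $\pi_{1,1}d$ and its adjoint \eqref{image} gives the equivalence: $\pi_{1,1}\mathcal B$ is closed in $\mathcal E_{1,1}(M)_{\mathbb R}$ if and only if $d(\Omega^{1,1}(M)_{\mathbb R})$ is closed in $\Omega^3(M)$, which is exactly the statement that $J$ is closed if and only if $J$ is $C^{\infty}$ closed.

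The main obstacle I anticipate is not the soft functional analysis but the careful bookkeeping in the adjoint computation: one must verify that the formal transpose of the composite $\pi_{1,1}\circ d$ (current side, degree $3\to 2$, then project) really is the composite $d\circ\iota$ where $\iota$ is the inclusion of $(1,1)$-forms, and in particular that no cross terms from the $(2,0),(0,2)$ part survive the pairing. This hinges on the orthogonality of the type decomposition under the duality pairing between forms and currents; so I would take care to state explicitly that the pairing of a $(1,1)$-current with a $(2,0),(0,2)$-form vanishes, which is what makes $\pi_{1,1}$ on currents the honest adjoint of $\pi_{1,1}$ on forms. With that verified, the closed range theorem does the rest.
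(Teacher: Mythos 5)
Your proposal is correct and follows essentially the same route as the paper: identify $\pi_{1,1}\mathcal B$ as the image of $\pi_{1,1}d:\mathcal E_3(M)\to\mathcal E_{1,1}(M)_{\mathbb R}$, verify that its adjoint is $d:\Omega^{1,1}(M)_{\mathbb R}\to\Omega^3(M)$ using the duality between the type decompositions of forms and currents, and conclude by the closed range theorem for Fr\'echet spaces. The paper merely asserts the adjoint computation; your explicit check that the $(2,0),(0,2)$ part drops out of the pairing is exactly the point being taken for granted there.
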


By the same argument for Lemma \ref{schaefer}, we have

\begin{lemma} \label{sigma} If $J$ is a closed almost complex structure, then
 $\pi_{1,1}\mathcal Z$ is also a closed subspace of $\mathcal
E_{1,1}(M)_{\mathbb R}$.
\end{lemma}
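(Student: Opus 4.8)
The plan is to run the current-side analogue of the argument for Lemma \ref{schaefer}, where the hypothesis built into Definition \ref{closed} supplies exactly the closedness that was missing in the form case. Write $\mathcal Z=\mathcal Z_2$ for the space of closed real $2$-currents and $\mathcal B=\mathcal B_2=d\mathcal E_3(M)$ for the $2$-boundaries, so that $\pi_{1,1}\mathcal Z$ and $\pi_{1,1}\mathcal B$ are the objects named just after the statement of Lemma \ref{sigma}. Since the de Rham homology $H_2(M;\mathbb R)=\mathcal Z/\mathcal B$ is finite dimensional, I would first choose a finite-dimensional subspace $W\subset\mathcal Z$ lifting it, which at the level of vector spaces yields $\mathcal Z=\mathcal B+W$. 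This is pure linear algebra and requires no topological input beyond finiteness of $\dim H_2(M;\mathbb R)$.

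Applying the continuous linear projection $\pi_{1,1}$ to this decomposition gives the current analogue of \eqref{sum},
\[
\pi_{1,1}\mathcal Z=\pi_{1,1}\mathcal B+\pi_{1,1}W .
\]
Now the two summands are precisely of the form treated by Lemma \ref{tvs}. By the standing hypothesis that $J$ is closed (Definition \ref{closed}), $\pi_{1,1}\mathcal B$ is a closed subspace of $\mathcal E_{1,1}(M)_{\mathbb R}$; and $\pi_{1,1}W$, being the image of a finite-dimensional space under a linear map, is itself finite dimensional. Hence Lemma \ref{tvs}, applied with $V=\mathcal E_{1,1}(M)_{\mathbb R}$, $Q=\pi_{1,1}\mathcal B$ and $P=\pi_{1,1}W$, shows that the sum $\pi_{1,1}\mathcal Z$ is closed in $\mathcal E_{1,1}(M)_{\mathbb R}$, which is the desired conclusion.

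I do not expect a genuine obstacle here; the only point that deserves care is the place where the argument departs from Lemma \ref{schaefer}. There $\pi^{1,1}\mathbf B$ was not known to be closed, which forced the detour through the closures $\overline{\pi^{1,1}\mathbf B}$ and $\overline{\pi^{1,1}\mathbf Z}$ and the auxiliary subspace $\tilde W^1$; here the defining property of a closed $J$ delivers the closedness of $\pi_{1,1}\mathcal B$ directly, so no passage to closures and no reduction of $W$ to a genuine topological complement is needed. I would simply verify that Lemma \ref{tvs} requires only that $P$ be finite dimensional, not that it be complementary to $Q$, so that the finite-dimensional $\pi_{1,1}W$ may be fed in as is.
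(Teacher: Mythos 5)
Your proof is correct and is exactly the argument the paper intends: the paper's entire proof of Lemma \ref{sigma} is the single line ``By the same argument for Lemma \ref{schaefer}, we have,'' and you carry out that argument, correctly observing that the hypothesis that $J$ is closed makes the detour through closures and the auxiliary space $\tilde W^1$ unnecessary, so Lemma \ref{tvs} applies directly with $Q=\pi_{1,1}\mathcal B$ and $P=\pi_{1,1}W$.
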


Notice that if $J$ is a complex structure, then the image of the
operator \eqref{image} actually lies in $\Omega^{(2,1),
(1,2)}(M)_{\mathbb R}:=(\Omega^{2,1}_J(M)_{\mathbb C}\oplus
\Omega^{1,2}_J(M)_{\mathbb C})\cap \Omega^3(M)$. And it was shown in
\cite{HL} that in this case the image is a closed subspace of
$\Omega^{(2,1), (1,2)}(M)_{\mathbb R}$. Denote this image by $V$. We
write
$$V=(\pi^{(2,1), (1,2)})^{-1}(V)\cap (\pi^{(3,0), (0,3)})^{-1}(0), $$
where  $\pi^{(2,1),(1,2)}$ and $\pi^{(3,0), (0,3)}$ are  projection
operators from $\Omega^3(M)$ to $\Omega^{(2,1), (1,2)}(M)_{\mathbb
R}$ and $\Omega^{(3,0), (0,3)}(M)_{\mathbb
R}:=(\Omega^{3,0}_J(M)_{\mathbb C}\oplus \Omega^{0,3}_J(M)_{\mathbb
C})\cap \Omega^3(M)$ respectively. By the continuity of
$\pi^{(2,1),(1,2)}$ and $\pi^{(3,0), (0,3)}$, we see that $V$ is
also closed in $\Omega^3(M)_{\mathbb R}$. Thus we conclude

\begin{prop} \label{closed} Any complex structure is closed (and also $C^{\infty}$ closed).

\end{prop}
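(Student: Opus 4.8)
The plan is to reduce the statement to a single closedness assertion inside the space of $(2,1)+(1,2)$ forms, where the genuinely analytic input from \cite{HL} applies, and to handle the rest by the formal bookkeeping of the Hodge type decomposition. By the equivalence of closedness and $C^{\infty}$ closedness established above via the closed range theorem, it suffices to prove that $J$ is $C^{\infty}$ closed, i.e. that the image $V$ of the operator \eqref{image},
$$d:\Omega^{1,1}(M)_{\mathbb R}\to\Omega^3(M),$$
is a closed subspace of $\Omega^3(M)$.

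The first step is to use integrability to locate $V$ inside the smaller space $\Omega^{(2,1),(1,2)}(M)_{\mathbb R}$. On a complex manifold $d=\partial+\bar\partial$, so for a real $(1,1)$-form $\phi$ we have $d\phi=\partial\phi+\bar\partial\phi$, where $\partial\phi$ has type $(2,1)$ and $\bar\partial\phi=\overline{\partial\phi}$ has type $(1,2)$. In particular $d\phi$ carries no $(3,0)$ or $(0,3)$ component. This is precisely the point at which integrability enters: for a non-integrable $J$, the differential of a $(1,1)$-form can in general also acquire $(3,0)$ and $(0,3)$ pieces. Hence $V\subset\Omega^{(2,1),(1,2)}(M)_{\mathbb R}$.

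The key analytic input, which I would import wholesale from \cite{HL}, is that $V$ is closed as a subspace of $\Omega^{(2,1),(1,2)}(M)_{\mathbb R}$. Granting this, it remains to upgrade the closedness to the ambient space $\Omega^3(M)$. Using the type splitting $\Omega^3(M)=\Omega^{(3,0),(0,3)}(M)_{\mathbb R}\oplus\Omega^{(2,1),(1,2)}(M)_{\mathbb R}$ and the associated projections, I would write
$$V=(\pi^{(2,1),(1,2)})^{-1}(V)\cap(\pi^{(3,0),(0,3)})^{-1}(0).$$
The second factor is closed because $\pi^{(3,0),(0,3)}$ is continuous and $\{0\}$ is closed; the first factor is closed because $\pi^{(2,1),(1,2)}$ is continuous and $V$ is closed in its target by the previous step. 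Therefore $V$ is an intersection of two closed subspaces of $\Omega^3(M)$, hence closed, which shows $J$ is $C^{\infty}$ closed and, by the equivalence above, closed.

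The main obstacle is the closedness of $V$ inside $\Omega^{(2,1),(1,2)}(M)_{\mathbb R}$: this is the one step that is not formal, resting on an elliptic/Hodge-theoretic range statement, and it is exactly the piece I would cite from \cite{HL} rather than reprove. Everything else — restricting to the $(2,1)+(1,2)$ types via $d=\partial+\bar\partial$, and promoting closedness from the subspace to $\Omega^3(M)$ through continuity of the projections — is soft and should go through without difficulty.
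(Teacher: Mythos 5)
Your proposal is correct and follows essentially the same route as the paper: integrability forces the image $V$ of $d$ on real $(1,1)$-forms to lie in $\Omega^{(2,1),(1,2)}(M)_{\mathbb R}$, the closedness of $V$ there is quoted from \cite{HL}, and $V$ is then exhibited as $(\pi^{(2,1),(1,2)})^{-1}(V)\cap(\pi^{(3,0),(0,3)})^{-1}(0)$, an intersection of closed sets by continuity of the projections. The only (welcome) addition is that you spell out why integrability kills the $(3,0)$ and $(0,3)$ components via $d=\partial+\bar\partial$, which the paper leaves implicit.
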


It would be interesting to see whether the explicit deformation
$J_{\alpha}$ of a complex structure $J$ by a holomorphic $2-$form
$\alpha$ in \cite{Lee} is closed.

\subsection{Duality}

%If it is true that the natural homomorphisms

%$$ \frac{\pi_{1,1}\mathcal Z}{\pi_{1,1}\mathcal
%B}\to \frac{\overline{\pi_{1,1}\mathcal
%Z}}{\overline{\pi_{1,1}\mathcal B}}$$ and
%$$ \frac{\pi^{1,1}\mathbf Z}{\pi^{1,1}\mathbf
%B}\to \frac{\overline{\pi^{1,1}\mathbf
%Z}}{\overline{\pi^{1,1}\mathbf B}}$$ are always  surjective, then we
%can get b

In this subsection we compare $H_{1,1}^J(M)_\mathbb{R}$ and
$H^{1,1}_J(M)_\mathbb{R}$.  We recall the following well-known fact.
%%%%%%%%%%%%%%%%%%%%%%%%%%%%%%%%%%%%%%%%%%%%%%%%%%%%%%%%%%%%%%%%%%%%%%%%%%%%%%%%

\begin{lemma}\label{closed-exact}
A real $(1,1)-$form is closed if and only if it vanishes on
$\pi_{1,1}\mathcal B$, and a real $(1,1)-$form is exact  if and only
if it vanishes on $\pi_{1,1}\mathcal Z$. The same is true in the
$(2,0), (0,2)$ setting.
\end{lemma}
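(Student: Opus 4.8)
The plan is to deduce this $(1,1)$-refinement from the two elementary duality facts already recorded in Section 2.1 — namely that a smooth form is closed if and only if it vanishes on the space $\mathcal B$ of boundary currents, and (by Theorem 17' of \cite{DR}) that a smooth form is exact if and only if it vanishes on the space $\mathcal Z$ of closed currents — together with the orthogonality of the type decompositions of forms and currents.

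First I would record the orthogonality. Since the $J$-action on $\mathcal E_2(M)$ is the transpose of the involution $J$ on $\Omega^2(M)$, the pairing $\langle\cdot,\cdot\rangle:\Omega^2(M)\times\mathcal E_2(M)\to\mathbb R$ satisfies $\langle J\alpha,T\rangle=\langle\alpha,JT\rangle$. Hence if $\alpha\in\Omega^{1,1}(M)_{\mathbb R}$ (so $J\alpha=\alpha$) and $T\in\mathcal E_{(2,0),(0,2)}(M)_{\mathbb R}$ (so $JT=-T$), then $\langle\alpha,T\rangle=\langle J\alpha,T\rangle=\langle\alpha,JT\rangle=-\langle\alpha,T\rangle$, forcing $\langle\alpha,T\rangle=0$. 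This is precisely the assertion, recorded in Section 2.1, that $\Omega^{1,1}(M)_{\mathbb R}$ is the dual of $\mathcal E_{1,1}(M)_{\mathbb R}$. Consequently, for any real $(1,1)$-form $\alpha$ and any current $T\in\mathcal E_2(M)$, decomposing $T=\pi_{1,1}T+\pi_{(2,0),(0,2)}T$ yields the key identity
$$\langle\alpha,T\rangle=\langle\alpha,\pi_{1,1}T\rangle.$$

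With this identity the two equivalences fall out immediately. A real $(1,1)$-form $\alpha$ vanishes on $\pi_{1,1}\mathcal B$ precisely when $\langle\alpha,\pi_{1,1}T\rangle=0$ for every $T\in\mathcal B$, which by the identity is the same as $\langle\alpha,T\rangle=0$ for every $T\in\mathcal B$, i.e.\ $\alpha$ vanishes on $\mathcal B$; by the closedness criterion this holds exactly when $\alpha$ is closed. Replacing $\mathcal B$ by $\mathcal Z$ throughout and invoking the exactness criterion of \cite{DR} gives in the same way that $\alpha$ vanishes on $\pi_{1,1}\mathcal Z$ if and only if $\alpha$ is exact. The $(2,0),(0,2)$ statement is proved by the identical argument, now using that a real $(2,0),(0,2)$-form is $J$-anti-invariant and so pairs trivially with $\mathcal E_{1,1}(M)_{\mathbb R}$, giving $\langle\beta,T\rangle=\langle\beta,\pi_{(2,0),(0,2)}T\rangle$.

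I do not expect any genuine obstacle here: the content is entirely formal once the type orthogonality is in hand, and that orthogonality is essentially the duality between $\Omega^{1,1}_J(M)_{\mathbb R}$ and $\mathcal E^J_{1,1}(M)_{\mathbb R}$ already set up in Section 2.1. The only point requiring a moment's care is the compatibility of the projection $\pi_{1,1}$ on currents with the pairing — that is, verifying the displayed identity $\langle\alpha,T\rangle=\langle\alpha,\pi_{1,1}T\rangle$ — but this is exactly the vanishing of the cross term $\langle\alpha,\pi_{(2,0),(0,2)}T\rangle$ established above, so no analytic input beyond the de Rham duality theorems is needed.
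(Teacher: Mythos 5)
Your proposal is correct and follows essentially the same route as the paper: the key identity $\langle\alpha,T\rangle=\langle\alpha,\pi_{1,1}T\rangle$ is exactly the first equality $\alpha\cdot\pi_{1,1}(du)=\alpha\cdot du$ in the paper's computation, after which both arguments reduce to the duality criteria (a form is closed iff it kills $\mathcal B$, exact iff it kills $\mathcal Z$ via Theorem 17' of de Rham). Your explicit verification of the type orthogonality via $J$-invariance is a welcome elaboration of what the paper leaves implicit in the statement that $\Omega^{1,1}_J(M)_{\mathbb R}$ is dual to $\mathcal E^J_{1,1}(M)_{\mathbb R}$.
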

\begin{proof} Suppose $\alpha$ is a real closed $(1,1)-$form. Then for
any $u\in \mathcal E(M)_{\mathbb R}$, we have
\begin{equation}\label{0}\alpha\cdot \pi_{1,1}(du)=\alpha\cdot du=d\alpha\cdot
u=0,\end{equation}  i.e. $\alpha$ vanishes on $\pi_{1,1}\mathcal B$.
Conversely, if $\alpha$ vanishes on $\pi_{1,1}\mathcal B$, then we
still have \eqref{0}. This  implies that $d\alpha=0$ since forms are
dual to currents.

Suppose $d\beta$ is a real exact $(1,1)-$form, then for any $v\in
\mathcal Z$, we have
$$d\beta\cdot \pi_{1,1}v=d\beta \cdot v =\beta \cdot dv=0,$$
i.e. $d\beta$ vanishes on $\pi_{1,1}\mathcal Z$. For the converse
statement, suppose $\gamma$ is a real $(1,1)-$form vanishing on
$\pi_{1,1}\mathcal Z$. Then $\gamma$ vanishes on $\mathcal Z$. By
Theorem 17' in \cite{DR}, $\gamma$ is exact.
\end{proof}

%Let  $\overline{\pi_{1,1}\mathcal B}$ be the closure of
%$\pi_{1,1}\mathcal B$ in $\mathcal E_{1,1}(M)_{\mathbb R}$. Notice
%that $\mathcal E_{1,1}(M)_{\mathbb R}$ is closed in $\mathcal
%E_2(M)_{\mathbb R}$. So $\overline{\pi_{1,1}\mathcal B}$ be the
%closure of $\pi_{1,1}\mathcal B$ in $\mathcal E_{2}(M)_{\mathbb R}$.
%Similarly let  $\overline{\pi_{1,1}\mathcal Z}$ be the closure of
%$\pi_{1,1}\mathcal Z$ in $\mathcal E_{1,1}(M)_{\mathbb R}$.

Since a real $(1,1)-$form vanishes on  $\pi_{1,1}\mathcal B$
($\pi_{1,1}\mathcal Z$) if and only if it vanishes on
$\overline{\pi_{1,1}\mathcal B}$ ($\overline{\pi_{1,1}\mathcal Z}$),
we have

\begin{lemma}\label{closed-exact-bar}
A real $(1,1)-$form is closed if and only if it vanishes on
$\overline{\pi_{1,1}\mathcal B}$, and a real $(1,1)-$form is exact
if and only if it vanishes on $\overline{\pi_{1,1}\mathcal Z}$. The
same is true in the $(2,0), (0,2)$ setting.
\end{lemma}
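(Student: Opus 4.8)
The plan is to deduce Lemma~\ref{closed-exact-bar} directly from Lemma~\ref{closed-exact} together with a single elementary fact about topological vector spaces: a continuous linear functional that vanishes on a subset automatically vanishes on its closure, and conversely. Since Lemma~\ref{closed-exact} already characterizes closedness and exactness of a real $(1,1)$-form in terms of vanishing on $\pi_{1,1}\mathcal B$ and $\pi_{1,1}\mathcal Z$ respectively, it suffices to show that vanishing on these sets is the same as vanishing on their closures $\overline{\pi_{1,1}\mathcal B}$ and $\overline{\pi_{1,1}\mathcal Z}$.

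The key step is to observe that a real $(1,1)$-form $\alpha$ acts as a \emph{continuous} linear functional on the space of currents. This is exactly the duality recorded in Subsection~2.1: $\Omega_J^{1,1}(M)_{\mathbb R}$ is the topological dual of $\mathcal E^J_{1,1}(M)_{\mathbb R}$, and the pairing $c\mapsto \alpha\cdot c$ is continuous in $c$. First I would note that continuity makes the kernel $\{c : \alpha\cdot c = 0\}$ a closed subspace of $\mathcal E_2(M)$. Hence if $\alpha$ vanishes on a subset $S$ of currents, its kernel contains $S$, therefore contains $\overline S$, so $\alpha$ vanishes on $\overline S$ as well; the converse is immediate since $S\subset \overline S$. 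Thus $\alpha$ vanishes on $\pi_{1,1}\mathcal B$ if and only if it vanishes on $\overline{\pi_{1,1}\mathcal B}$, and likewise for $\pi_{1,1}\mathcal Z$.

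Combining this equivalence with Lemma~\ref{closed-exact} finishes the argument: $\alpha$ is closed iff it vanishes on $\pi_{1,1}\mathcal B$ iff it vanishes on $\overline{\pi_{1,1}\mathcal B}$, and $\alpha$ is exact iff it vanishes on $\pi_{1,1}\mathcal Z$ iff it vanishes on $\overline{\pi_{1,1}\mathcal Z}$. The $(2,0),(0,2)$ case is verbatim the same, using the dual pairing between $\Omega_J^{(2,0),(0,2)}(M)_{\mathbb R}$ and $\mathcal E^J_{(2,0),(0,2)}(M)_{\mathbb R}$. There is no serious obstacle here; the only point requiring care is the continuity of the form--current pairing, which is precisely what the Fr\'echet-space duality set up at the start of Section~2 provides, so the whole statement is a short corollary of the preceding lemma.
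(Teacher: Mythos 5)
Your proposal is correct and is essentially the paper's own argument: the authors derive Lemma \ref{closed-exact-bar} from Lemma \ref{closed-exact} by noting that a real $(1,1)$-form, being a continuous functional on currents, vanishes on $\pi_{1,1}\mathcal B$ (resp. $\pi_{1,1}\mathcal Z$) if and only if it vanishes on the closure. You have simply spelled out the continuity/closed-kernel point that the paper states in one line.
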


The following is a crucial observation based on Lemma
\ref{closed-exact-bar} and the Hahn-Banach Theorem.
\begin{prop}\label{important'}
For any almost complex structure $J$, there is a natural isomorphism
\begin{equation}\bar \sigma^{1,1}: H^{1,1}_J(M)_{\mathbb R}=\frac{\mathbf
Z^{1,1}}{\mathbf B^{1,1}}\to (\frac{\overline{\pi_{1,1}\mathcal
Z}}{\overline{\pi_{1,1}\mathcal B}})^*.
\end{equation}
Similarly, there are natural isomorphisms $\bar \sigma^{(2,0),
(0,2)}, \bar\sigma_{1,1}, \bar \sigma_{(2,0), (0,2)}$.
\end{prop}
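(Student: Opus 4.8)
The plan is to realize $\bar\sigma^{1,1}$ as the canonical duality pairing between forms and currents, and to deduce bijectivity from Lemma \ref{closed-exact-bar} together with the Hahn--Banach theorem. Throughout I write $E=\Omega^{1,1}_J(M)_{\mathbb R}$ and $F=\mathcal E^J_{1,1}(M)_{\mathbb R}$; recall from the previous subsection that these Fr\'echet spaces form a dual pair, each being the topological dual of the other. Set $A=\overline{\pi_{1,1}\mathcal B}$ and $B=\overline{\pi_{1,1}\mathcal Z}$, closed subspaces of $F$ with $A\subseteq B$ (since $\mathcal B\subseteq\mathcal Z$ and closure preserves inclusion), so that $B/A$ is a Hausdorff locally convex space with a well-defined continuous dual.

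First I would define the map. A class in $\mathbf Z^{1,1}/\mathbf B^{1,1}$ is represented by a closed real $(1,1)$-form $\alpha$, which pairs with currents to give a continuous functional on $F$. By Lemma \ref{closed-exact-bar}, $\alpha$ being closed means exactly that it annihilates $A=\overline{\pi_{1,1}\mathcal B}$, so its restriction to $B$ descends to a functional on $B/A$; this is $\bar\sigma^{1,1}([\alpha])$, and it is natural since it is induced by the canonical pairing. Well-definedness on the quotient is the other half of Lemma \ref{closed-exact-bar}: if $\alpha\in\mathbf B^{1,1}$ is exact then it annihilates all of $B=\overline{\pi_{1,1}\mathcal Z}$, hence induces the zero functional on $B/A$.

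Next come injectivity and surjectivity. If $\bar\sigma^{1,1}([\alpha])=0$ then $\alpha$ annihilates $B$, so by Lemma \ref{closed-exact-bar} $\alpha$ is exact and $[\alpha]=0$; this gives injectivity. For surjectivity, given $\ell\in(B/A)^*$, precompose with the quotient map to get a continuous functional on $B$ vanishing on $A$; the Hahn--Banach theorem extends it to a continuous functional on all of $F$, which, since $F^*=E$, is the pairing with some $\alpha\in E=\Omega^{1,1}_J(M)_{\mathbb R}$. Because $\alpha$ annihilates $A$, Lemma \ref{closed-exact-bar} shows $\alpha$ is closed, and by construction $\bar\sigma^{1,1}([\alpha])=\ell$. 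Equivalently, one may argue abstractly: restriction $E\to B^*$ is surjective with kernel the annihilator of $B$, and intersecting with the functionals killing $A$ identifies $(B/A)^*$ with the quotient of the annihilator of $A$ by the annihilator of $B$, which is $\mathbf Z^{1,1}/\mathbf B^{1,1}$ precisely because $A\subseteq B$.

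The main obstacle, and the reason the closures appear in the statement, is precisely the Hahn--Banach extension step: to recognize an arbitrary functional on the quotient as coming from a genuine smooth form I must pair against \emph{closed} subspaces, which is why Lemma \ref{closed-exact-bar} (phrased with $\overline{\pi_{1,1}\mathcal B}$ and $\overline{\pi_{1,1}\mathcal Z}$) rather than Lemma \ref{closed-exact} is the correct input. Finally, the three remaining isomorphisms $\bar\sigma^{(2,0),(0,2)}$, $\bar\sigma_{1,1}$, $\bar\sigma_{(2,0),(0,2)}$ follow verbatim: the superscript $(2,0),(0,2)$ case uses the $(2,0),(0,2)$ part of Lemma \ref{closed-exact-bar}, while the two subscript cases simply interchange the roles of forms and currents, pairing $\mathcal Z_{1,1}/\mathcal B_{1,1}$ (respectively its $(2,0),(0,2)$ analogue) against the dual of $\overline{\pi^{1,1}\mathbf Z}/\overline{\pi^{1,1}\mathbf B}$ and invoking the current-side analogue of Lemma \ref{closed-exact-bar}, proved by the same argument with the roles of $\Omega^*(M)$ and $\mathcal E_*(M)$ reversed.
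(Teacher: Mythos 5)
Your proposal is correct and follows essentially the same route as the paper: the map is induced by the canonical form--current pairing, well-definedness and injectivity come from Lemma \ref{closed-exact}/\ref{closed-exact-bar}, and surjectivity is obtained by Hahn--Banach extension of a functional on the closed subspace $\overline{\pi_{1,1}\mathcal Z}$ to all of $\mathcal E_{1,1}(M)_{\mathbb R}$, using that this dual is $\Omega^{1,1}_J(M)_{\mathbb R}$. Your direct verification that the constructed form maps to the given functional even streamlines the paper's slightly garbled discussion of independence of the choice of extension.
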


\begin{proof}
By Lemma  \ref{closed-exact-bar},  closed $(1,1)-$forms vanish on
$\overline{\pi_{1,1}\mathcal B}$, thus there is a homomorphism
$\sigma':\mathbf Z^{1,1}\to (\frac{\overline{\pi_{1,1}\mathcal
Z}}{\overline{\pi_{1,1}\mathcal B}})^*$. The kernel of $\sigma'$ is
$\mathbf B^{1,1}$ by Lemma \ref{closed-exact}. Therefore
$\bar\sigma^{1,1}$ is well-defined and an injection.

The surjectivity is proved by an application of the Hahn-Banach
Theorem to construct a homomorphism in the reverse direction. An
element in $(\frac{\overline{\pi_{1,1}\mathcal
Z}}{\overline{\pi_{1,1}\mathcal B}})^*$ is the same as a functional
$L$ on $\overline{\pi_{1,1}\mathcal Z}$ vanishing on
$\overline{\pi_{1,1}\mathcal B}$. Since $\overline{\pi_{1,1}\mathcal
Z}$ is closed subspace of $\mathcal E_{1,1}(M)_{\mathbb R}$,  we can
extend $L$ to a functional $\mathcal L$ on $\mathcal
E_{1,1}(M)_{\mathbb R}$ vanishing on $\overline{\pi_{1,1}\mathcal
B}$. Such a $\mathcal L$ corresponds to a closed $(1,1)-$form by
Lemma \ref{closed-exact}. If $\mathcal L'$ is another extension of
$L$, then $\mathcal L'-\mathcal L$ vanishes on
$\overline{\pi_{1,1}\mathcal Z}$ by Lemma  \ref{closed-exact}. And
since We have shown that there is a homomorphism
$\tau:(\frac{\overline{\pi_{1,1}\mathcal
Z}}{\overline{\pi_{1,1}\mathcal B}})^*\to H^{1,1}_J(M)_{\mathbb R}.$
Since $\bar\sigma^{1,1} \tau=\rm{id}$, $\bar\sigma^{1,1}$ is also
surjective.
\end{proof}

If $J$ is closed, by Lemma \ref{sigma}, $\bar\sigma^{1,1}$ becomes
the  isomorphism $$\sigma^{1,1}: H^{1,1}_J(M)_{\mathbb
R}=\frac{\mathbf Z^{1,1}}{\mathbf B^{1,1}}\to
(\frac{\pi_{1,1}\mathcal Z}{\pi_{1,1}\mathcal B})^*.
$$
Together with Lemma \ref{injective}, we obtain
\begin{cor}\label{dim} Let $J$ be a closed and  pure almost complex
structure.  Then we have \begin{equation}\label{geq}\dim
H_J^{1,1}(M)_{\mathbb R}\geq \dim H^J_{1,1}(M)_{\mathbb R}.
\end{equation}  If $J$ is also full, then $H^J_{1,1}(M)_{\mathbb R}$ and
$H_J^{1,1}(M)_{\mathbb R}$ have the same dimension and are dual to
each other.
\end{cor}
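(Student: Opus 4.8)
\emph{Proof proposal.} The plan is to reduce the statement to a dimension count, combining the duality isomorphism $\sigma^{1,1}$ (available because $J$ is closed) with the injectivity of $\iota_{1,1}$ supplied by purity via Lemma \ref{injective}. First I would record that both $H_J^{1,1}(M)_{\mathbb R}$ and $H^J_{1,1}(M)_{\mathbb R}$ are finite dimensional, being respectively subspaces of $H^2(M;\mathbb R)$ and $H_2(M;\mathbb R)$. The whole argument then revolves around the intermediate finite-dimensional space
$$V:=\frac{\pi_{1,1}\mathcal Z}{\pi_{1,1}\mathcal B}.$$

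The key preliminary step is to show that $V$ is finite dimensional. I would do this by exhibiting a surjection from $H_2(M;\mathbb R)$: the composite $\mathcal Z_2\xrightarrow{\pi_{1,1}}\pi_{1,1}\mathcal Z\to V$ is surjective by construction, and it annihilates $\mathcal B_2$ because $\pi_{1,1}(\mathcal B_2)\subset\pi_{1,1}\mathcal B$. Hence it factors through $\mathcal Z_2/\mathcal B_2=H_2(M;\mathbb R)$ to give a surjection $H_2(M;\mathbb R)\twoheadrightarrow V$, so $\dim V\le b_2<\infty$. Moreover, since $J$ is closed, $\pi_{1,1}\mathcal B$ is closed inside the closed subspace $\pi_{1,1}\mathcal Z$ (Lemma \ref{sigma}), so $V$ is a finite-dimensional Hausdorff topological vector space; for such a space its continuous dual coincides with its algebraic dual and $\dim V^*=\dim V$.

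With this in hand the rest is formal. The isomorphism $\sigma^{1,1}\colon H_J^{1,1}(M)_{\mathbb R}\xrightarrow{\ \cong\ }V^*$ gives $\dim H_J^{1,1}(M)_{\mathbb R}=\dim V^*=\dim V$. By Lemma \ref{injective}, purity makes $\iota_{1,1}\colon H^J_{1,1}(M)_{\mathbb R}\to V$ injective, whence $\dim H^J_{1,1}(M)_{\mathbb R}\le \dim V=\dim H_J^{1,1}(M)_{\mathbb R}$, which is exactly \eqref{geq}. If in addition $J$ is full, then $\iota_{1,1}$ is also surjective, hence an isomorphism, and the two dimensions coincide. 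For the duality assertion I would compose the transpose $\iota_{1,1}^{*}\colon V^*\to (H^J_{1,1}(M)_{\mathbb R})^*$ (an isomorphism, since $\iota_{1,1}$ is) with $\sigma^{1,1}$ to obtain a natural isomorphism $H_J^{1,1}(M)_{\mathbb R}\cong (H^J_{1,1}(M)_{\mathbb R})^*$; concretely this is induced by the pairing $([\alpha],[T])\mapsto \alpha\cdot T$ of closed $(1,1)$-forms against closed $(1,1)$-currents, which is well defined by Lemma \ref{closed-exact}.

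I expect the only genuinely nontrivial point to be the finite-dimensionality of $V$ in the second step. Without it one could not pass from the isomorphism $\sigma^{1,1}\colon H_J^{1,1}\cong V^*$ to the equality $\dim H_J^{1,1}=\dim V$, since for a general infinite-dimensional topological vector space the continuous dual may have a strictly different (indeed smaller) dimension; the surjection $H_2(M;\mathbb R)\twoheadrightarrow V$ is what rules this pathology out. Everything else follows by unwinding the isomorphisms already established in Proposition \ref{important'} and Lemma \ref{injective}.
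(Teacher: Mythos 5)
Your argument is correct and follows the same route as the paper: the paper deduces the corollary directly from the isomorphism $\sigma^{1,1}\colon H_J^{1,1}(M)_{\mathbb R}\to\bigl(\pi_{1,1}\mathcal Z/\pi_{1,1}\mathcal B\bigr)^*$ (valid since $J$ is closed, via Lemma \ref{sigma}) combined with the injectivity/surjectivity of $\iota_{1,1}$ from Lemma \ref{injective}. Your explicit verification that the intermediate quotient $\pi_{1,1}\mathcal Z/\pi_{1,1}\mathcal B$ is finite dimensional and Hausdorff (via the surjection from $H_2(M;\mathbb R)$) is a detail the paper leaves implicit, and it is exactly what is needed to equate the dimension of that quotient with that of its continuous dual.
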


By Lemmas \ref {schaefer}, \ref{also},  and Proposition
\ref{important'}, we have

\begin{cor}\label{full}
Suppose  $\iota^{1,1}\,\, (\iota^{(2,0), (0,2)})$ is surjective,
 then \begin{equation}\label{geq(geq)}\dim H_J^{1,1}(M)_{\mathbb R}\geq \dim
H^J_{1,1}(M)_{\mathbb R}\,\,\,( \dim H_J^{(2,0), (0,2)}(M)_{\mathbb
R}\geq \dim H^J_{(2,0), (0,2)}(M)_{\mathbb R}).
\end{equation}
Suppose  $\iota_{1,1}\,\, (\iota_{(2,0), (0,2)})$ is surjective,
then the reverse inequality holds.
\end{cor}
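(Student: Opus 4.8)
The plan is to reduce the entire statement to the elementary fact that a surjective linear map between finite--dimensional vector spaces cannot increase dimension, with the dimensions supplied by the duality isomorphisms of Proposition \ref{important'}. Since $M$ is closed, every De Rham (co)homology group in sight is finite dimensional, so $\bar\sigma^{1,1}$ yields
$$\dim H_J^{1,1}(M)_{\mathbb R}=\dim\Bigl(\frac{\overline{\pi_{1,1}\mathcal Z}}{\overline{\pi_{1,1}\mathcal B}}\Bigr),$$
while $\bar\sigma_{1,1}$ yields
$$\dim H^J_{1,1}(M)_{\mathbb R}=\dim\Bigl(\frac{\overline{\pi^{1,1}\mathbf Z}}{\overline{\pi^{1,1}\mathbf B}}\Bigr).$$
The point to keep firmly in mind is that this duality \emph{crosses the two complexes}: the cohomology group built from forms is paired with the \emph{current} closure quotient, and the homology group built from currents is paired with the \emph{form} closure quotient.

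For the first inequality I would consider the composite
$$\phi^{1,1}\circ\iota^{1,1}:\ H_J^{1,1}(M)_{\mathbb R}=\frac{\mathbf Z^{1,1}}{\mathbf B^{1,1}}\longrightarrow \frac{\overline{\pi^{1,1}\mathbf Z}}{\overline{\pi^{1,1}\mathbf B}}.$$
By hypothesis $\iota^{1,1}$ is surjective, and $\phi^{1,1}$ is always surjective by Lemma \ref{schaefer}; hence the composite is surjective. A surjection of finite--dimensional spaces forces $\dim H_J^{1,1}(M)_{\mathbb R}\geq\dim\bigl(\overline{\pi^{1,1}\mathbf Z}/\overline{\pi^{1,1}\mathbf B}\bigr)$, and the right--hand side equals $\dim H^J_{1,1}(M)_{\mathbb R}$ by the first step. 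This is precisely the claimed inequality. The parenthetical $(2,0),(0,2)$ assertion follows verbatim, replacing $1,1$ by $(2,0),(0,2)$ throughout and invoking $\bar\sigma^{(2,0),(0,2)}$ and $\bar\sigma_{(2,0),(0,2)}$ in place of $\bar\sigma^{1,1}$ and $\bar\sigma_{1,1}$.

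For the reverse inequality I would run the symmetric argument on the current side. Assuming $\iota_{1,1}$ surjective and using Lemma \ref{also} for the surjectivity of $\phi_{1,1}$, the composite
$$\phi_{1,1}\circ\iota_{1,1}:\ H^J_{1,1}(M)_{\mathbb R}=\frac{\mathcal Z_{1,1}}{\mathcal B_{1,1}}\longrightarrow \frac{\overline{\pi_{1,1}\mathcal Z}}{\overline{\pi_{1,1}\mathcal B}}$$
is surjective, giving $\dim H^J_{1,1}(M)_{\mathbb R}\geq\dim\bigl(\overline{\pi_{1,1}\mathcal Z}/\overline{\pi_{1,1}\mathcal B}\bigr)=\dim H_J^{1,1}(M)_{\mathbb R}$ after applying $\bar\sigma^{1,1}$. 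I do not expect a genuine obstacle in this corollary, as it is a purely formal consequence of the preceding duality and surjectivity lemmas; the only step requiring real care is the bookkeeping of the cross--duality, namely checking that the surjection produced by $\phi\circ\iota$ lands in exactly the closure quotient whose dimension, via the appropriate $\bar\sigma$, matches the group appearing on the other side of the inequality being proved.
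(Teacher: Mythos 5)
Your argument is correct and is exactly the paper's intended proof: the paper's justification for this corollary is simply the citation of Lemma \ref{schaefer}, Lemma \ref{also} and Proposition \ref{important'}, and the composites $\phi^{1,1}\circ\iota^{1,1}$ and $\phi_{1,1}\circ\iota_{1,1}$ together with the cross-duality isomorphisms $\bar\sigma_{1,1}$ and $\bar\sigma^{1,1}$ are precisely how those three results combine. You have merely written out the bookkeeping the paper leaves implicit, including the (correct) observation that the closure quotients are finite dimensional because they are surjective images of the finite-dimensional spaces $\pi^{1,1}W$ and $\pi_{1,1}W$.
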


By Lemmas \ref{iota'} and \ref{injective} and Corollary \ref{full},
we obtain
\begin{cor}
Suppose  $J$ is $C^{\infty}$ full,
 then \begin{equation}\label{geqgeq}\begin{array}{lll}\dim H_J^{1,1}(M)_{\mathbb R}&\geq &\dim
H^J_{1,1}(M)_{\mathbb R}, \cr \dim H_J^{(2,0), (0,2)}(M)_{\mathbb
R}&\geq &\dim H^J_{(2,0), (0,2)}(M)_{\mathbb R}.
\end{array}\end{equation}
Suppose $J$ is full, then the reverse inequalities  hold.
\end{cor}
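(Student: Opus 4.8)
The plan is to deduce the corollary by directly combining the surjectivity statements for the comparison maps with the dimension inequalities already recorded in Corollary \ref{full}. The statement splits into two halves according to the hypothesis, and each half is handled in the same way, once on the cohomology (form) side and once on the homology (current) side.

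First I would treat the case where $J$ is $C^{\infty}$ full. By Lemma \ref{iota'}, this hypothesis forces both $\iota^{1,1}$ and $\iota^{(2,0),(0,2)}$ to be surjective. Feeding the surjectivity of $\iota^{1,1}$ into the first assertion of Corollary \ref{full} yields $\dim H_J^{1,1}(M)_{\mathbb R}\geq \dim H^J_{1,1}(M)_{\mathbb R}$, and feeding in the surjectivity of $\iota^{(2,0),(0,2)}$ yields $\dim H_J^{(2,0),(0,2)}(M)_{\mathbb R}\geq \dim H^J_{(2,0),(0,2)}(M)_{\mathbb R}$. These are exactly the two inequalities in \eqref{geqgeq}.

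For the reverse inequalities, I would instead assume $J$ is full and pass to the homology side. By the second part of Lemma \ref{injective}, fullness makes both $\iota_{1,1}$ and $\iota_{(2,0),(0,2)}$ surjective. Now the second assertion of Corollary \ref{full} applies: surjectivity of $\iota_{1,1}$ gives the reverse inequality $\dim H^J_{1,1}(M)_{\mathbb R}\geq \dim H_J^{1,1}(M)_{\mathbb R}$, and surjectivity of $\iota_{(2,0),(0,2)}$ gives $\dim H^J_{(2,0),(0,2)}(M)_{\mathbb R}\geq \dim H_J^{(2,0),(0,2)}(M)_{\mathbb R}$.

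There is essentially no analytic obstacle left at this stage: the substantive content --- the duality isomorphisms of Proposition \ref{important'} obtained via Hahn--Banach, together with the surjectivity of $\phi^{1,1}$, $\phi_{1,1}$ and their $(2,0),(0,2)$ analogues from Lemmas \ref{schaefer} and \ref{also} --- has already been absorbed into Corollary \ref{full}. The only point requiring genuine care is the bookkeeping of directions: one must keep track that surjectivity of a form/cohomology map $\iota^{S}$ controls the inequality $\dim H_J^S\geq \dim H^J_S$, whereas surjectivity of the current/homology map $\iota_S$ controls the opposite inequality, so that the two distinct hypotheses ($C^{\infty}$ full versus full) correctly produce the two opposite inequalities as claimed.
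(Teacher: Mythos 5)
Your proof is correct and follows exactly the route the paper takes: the corollary is stated there as an immediate consequence of Lemma \ref{iota'}, Lemma \ref{injective}, and Corollary \ref{full}, which is precisely the combination you spell out. The direction bookkeeping (surjectivity of $\iota^{S}$ versus $\iota_{S}$ giving opposite inequalities) is also handled correctly.
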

%This implies Corollary 3.8.

Notice that   $H^J_{1,1}(M)_{\mathbb R}$ pairs trivially with
$H_J^{(2,0), (0,2)}(M)_{\mathbb R}$, and $H_J^{1,1}(M)_{\mathbb R}$
pairs trivially with $H^J_{(2,0), (0,2)}(M)_{\mathbb R}$.
 Suppose
$e\in H^J_{1,1}(M)_{\mathbb R}\cap H^J_{(2,0), (0,2)}(M)_{\mathbb
R}$. Then $e$ pairs trivially with $H_J^{1,1}(M)_{\mathbb R}+
H_J^{(2,0), (0,2)}(M)_{\mathbb R}$. Since the pairing of
$H_2(M;\mathbb R)$ and $H^2(M;\mathbb R)$ is non-degenerate, we have

\begin{prop}\label{simple}
If $J$ is $C^{\infty}$ full, then it  is pure and we have
\eqref{geqgeq}.
 Similarly, if $J$ is full, then it is
$C^{\infty}$ pure and the reverse inequalities hold.

 In
particular, if $J$ is both full and $C^{\infty}$ full, then it is
pure and full, as well as $C^{\infty}$ pure and full.

If $J$  is pure and full as well as $C^{\infty}$ pure and full, then
the $J-$homology and cohomology decompositions of $H_2(M;\mathbb R)$
and $H^2(M;\mathbb R)$ are dual to each other.
\end{prop}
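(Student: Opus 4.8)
The plan is to derive the entire statement from two facts already in place above: the pairing $H_2(M;\mathbb{R})\times H^2(M;\mathbb{R})\to\mathbb{R}$ is non-degenerate, and the two mixed pairings vanish, so that $H^J_{1,1}(M)_{\mathbb R}$ annihilates $H_J^{(2,0),(0,2)}(M)_{\mathbb R}$ while $H_J^{1,1}(M)_{\mathbb R}$ annihilates $H^J_{(2,0),(0,2)}(M)_{\mathbb R}$. The dimension inequalities \eqref{geqgeq} and their reverses are supplied by Lemmas \ref{iota'} and \ref{injective} together with Corollary \ref{full}, so I only need to establish the pureness assertions and the duality clause; everything else is bookkeeping.

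For "$C^{\infty}$ full implies pure" I would take any $e\in H^J_{1,1}(M)_{\mathbb R}\cap H^J_{(2,0),(0,2)}(M)_{\mathbb R}\subseteq H_2(M;\mathbb{R})$. Since $e$ lies in $H^J_{1,1}(M)_{\mathbb R}$ it pairs trivially with $H_J^{(2,0),(0,2)}(M)_{\mathbb R}$, and since it lies in $H^J_{(2,0),(0,2)}(M)_{\mathbb R}$ it pairs trivially with $H_J^{1,1}(M)_{\mathbb R}$; hence $e$ annihilates $H_J^{1,1}(M)_{\mathbb R}+H_J^{(2,0),(0,2)}(M)_{\mathbb R}$. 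When $J$ is $C^{\infty}$ full this sum is all of $H^2(M;\mathbb{R})$, so non-degeneracy forces $e=0$ and $J$ is pure. The implication "full implies $C^{\infty}$ pure" is the verbatim mirror image: take $e$ in $H_J^{1,1}(M)_{\mathbb R}\cap H_J^{(2,0),(0,2)}(M)_{\mathbb R}\subseteq H^2(M;\mathbb{R})$, observe it annihilates $H^J_{1,1}(M)_{\mathbb R}+H^J_{(2,0),(0,2)}(M)_{\mathbb R}$, and invoke fullness to conclude $e=0$. Combining the two implications with the hypotheses yields the "in particular" clause immediately.

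For the duality clause I start from the two decompositions $H_2=H^J_{1,1}\oplus H^J_{(2,0),(0,2)}$ and $H^2=H_J^{1,1}\oplus H_J^{(2,0),(0,2)}$, which hold by assumption. The vanishing mixed pairings give the inclusions $H^J_{1,1}\subseteq (H_J^{(2,0),(0,2)})^{\perp}$ and $H^J_{(2,0),(0,2)}\subseteq (H_J^{1,1})^{\perp}$, the annihilators taken inside $H_2(M;\mathbb{R})$ with respect to the non-degenerate pairing. To promote these to equalities I use a dimension count: since $J$ is simultaneously full and $C^{\infty}$ full, both \eqref{geqgeq} and its reverse hold, so $\dim H_J^{1,1}=\dim H^J_{1,1}$ and $\dim H_J^{(2,0),(0,2)}=\dim H^J_{(2,0),(0,2)}$. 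Non-degeneracy then yields $\dim (H_J^{(2,0),(0,2)})^{\perp}=\dim H_2-\dim H_J^{(2,0),(0,2)}$; using $\dim H_2=\dim H^2=\dim H_J^{1,1}+\dim H_J^{(2,0),(0,2)}$ and $\dim H_J^{1,1}=\dim H^J_{1,1}$ this equals $\dim H^J_{1,1}$, and with the inclusion it forces $H^J_{1,1}=(H_J^{(2,0),(0,2)})^{\perp}$; the symmetric computation gives $H^J_{(2,0),(0,2)}=(H_J^{1,1})^{\perp}$. These annihilator identities are exactly the assertion that the homology and cohomology decompositions are dual.

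There is no analytic difficulty here, as the hard work (the closedness, pureness and fullness machinery) is already packaged in the cited lemmas. The one place demanding care is the duality clause: the trivial-pairing inclusions by themselves do \emph{not} force the two decompositions to be dual, since a priori an annihilator could be strictly larger than the relevant summand. It is precisely the equality of dimensions coming from having \emph{both} fullness hypotheses — not just one — that closes that gap, so I would make the dimension count explicit rather than try to read off duality from orthogonality alone.
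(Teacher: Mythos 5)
Your proposal is correct and follows essentially the same route as the paper: the pureness claims are proved there by exactly your argument, namely that a class in the intersection annihilates the sum $H_J^{1,1}(M)_{\mathbb R}+H_J^{(2,0),(0,2)}(M)_{\mathbb R}$ (respectively $H^J_{1,1}(M)_{\mathbb R}+H^J_{(2,0),(0,2)}(M)_{\mathbb R}$), which fullness makes all of $H^2(M;\mathbb R)$ (resp.\ $H_2(M;\mathbb R)$), so non-degeneracy of the pairing forces it to vanish. Your explicit dimension count for the duality clause fills in a step the paper leaves implicit, and it is carried out correctly.
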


\begin{remark} \label {ft} It is  observed in \cite{FT} that if $J$ is $C^{\infty}$ pure
and full, then it is pure. Fino and Tomassini  further show that if
$J$ is a $C^{\infty}$ pure and full almost complex structure on a
$2n-$dimensional manifold, then $J$ is also pure and full under any
of the following three conditions:
\begin{itemize}
\item $n=2$, or

\item there is a metric $g$ compatible with $J$ such that any
cohomology class in $H_J^{1,1}(M)_{\mathbb R}$ ($H_J^{(2,0),
(0,2)}(M)_{\mathbb R}$) has a $g-$harmonic representative in
$\mathcal Z_J^{1,1}$ ($\mathcal Z^{(2,0), (0,2)}$) respectively,  or

\item $J$ is compatible with a symplectic form $\omega$  of Lefschetz
type.
\end{itemize}

The first condition implies that when $n=2$ any $J$ is pure and full
(see Remark \ref{last}), while the first and the third conditions
imply Proposition \ref{complex3}. Many families of pure and full
(non-K\"ahler) almost complex structures on compact nilmanifolds and
solvmanifolds are also constructed this way in \cite{FT}.
\end{remark}

\section{Symplectic forms versus complex cycles}
\subsection{Complex cycles and tamed symplectic forms}
 In this
subsection we describe  Sullivan's approach to tamed symplectic
forms.
\subsubsection{Structure cycles and closed transversal forms}

Let us first review some general basic concepts in \cite{S}.
 A compact convex cone
$\mathcal C$ in a locally convex topological space over $\mathbb R$
is a convex cone which for some continuous linear functional $L$
satisfies $L(w)>0$ for $w\ne 0$ in $\mathcal C$ and $L^{-1}(1)\cap
\mathcal C$ is compact. The latter set is called the base for the
cone.

A cone structure (of $2-$directions) on $M$ is a continuous field of
compact convex cones $\{\mathcal C_x\}$ in the vector space
$\Lambda_2(x)=\Lambda^2T_xM$ of tangent $2-$vectors on $M$. Such a
cone structure is called ample if at each point $x$ the cone
$\mathcal C_x$ intersects the linear span of the Schubert variety
$S_{\tau}$ of every $2-$plane $\tau$ at $x$ ($S_{\tau}$ is the set
of $2-$planes which intersect $\tau$ in at least a line).

A smooth $2-$form $\omega$ on $M$ is \textit{transversal} to the cone
structure $\mathcal C$ if $\omega(v)>0$ for each $v\ne 0$ in
$\mathcal C_x\subset \Lambda_2(x), x\in M$. Using a partition of
unity it is easy to see that such transversal forms always exist.

A Dirac current is one determined by the evaluation of $2-$forms on
a single $2-$vector at one point. The cone of structure currents
$\mathfrak C$ associated to the cone structure $\mathcal C$ is the
closed convex cone of currents generated by the Dirac currents
associated to the elements of $\mathcal C_x, x\in M$. It is easy to
see that if $M$ is compact then $\mathfrak C$ is a compact convex
cone.

The structure cycles of $\mathcal C$ are the structure currents of
$\mathcal C$ which are closed as currents. Let $Z\mathfrak C$ be the
cone of structure cycles and  let $H\mathfrak C$ be the cone of
homology classes of structure cycles in $H_2(M;\mathbb R)$. Let
$\breve {H\mathfrak C}\subset H^2(M;\mathbb R)$ be the dual cone
defined by $(\breve {H\mathfrak C}, H\mathfrak C)\geq 0$. Notice
that the interior of $\breve {H\mathfrak C}$ is characterized by
$(\hbox{Int} \breve {H\mathfrak C}, H\mathfrak C)> 0$ when $H\mathfrak
C\neq \{0\}$. That is because if we have an element $a$ in the
interior of $\breve {H\mathfrak C}$, pair with some element $s \in
H\mathfrak C$ is 0, then there exists $a'$ such that $(a',s)<0$.
Thus $(a+ta',s)<0$ for small positive $t$. That is a contradiction.

Let $\mathcal E_p(M)_{\mathbb R}$ be the space of real
$p-$dimensional currents. Let $\mathcal B\subset \mathcal
E_2(M)_{\mathbb R}$ be the subspace of
  boundaries and $\mathcal Z\subset \mathcal
E_2(M)_{\mathbb R}$ be the subspace of
 cycles.

Sullivan made the following beautiful observation using the duality
between forms and currents, the Hahn-Banach theorem, and the
compactness.

\begin{theorem}(\cite{S})\label{ample} Let $\mathcal C$ be an ample cone
structure. Then $\mathcal C$ has non-trivial cycles. And if no
non-trivial  structure cycle is homologous to zero, i.e.
$$Z\mathfrak C\cap \mathcal B=\{0\},$$
 then $M$ admits a
closed $2-$form transverse to the ample $2-$direction structure.
Moreover, $H\mathfrak C$ is a compact convex cone and the interior
of $\breve {H\mathfrak C}\subset H^2(M;\mathbb R)$ consists
precisely of the classes of closed forms transverse to $\mathcal C$.
\end{theorem}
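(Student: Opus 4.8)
The plan is to exploit exactly the three tools the discussion highlights: the duality between forms and currents (smooth $2$-forms are precisely the continuous functionals on $\mathcal E_2(M)$), the Hahn--Banach separation theorem, and the compactness of the structure cone $\mathfrak C$. The dictionary I would set up at the outset is: a $2$-form is transverse to $\mathcal C$ exactly when, viewed as a functional, it is strictly positive on $\mathfrak C\setminus\{0\}$; by Theorem 17' of \cite{DR} a form is closed iff it vanishes on $\mathcal B$ and exact iff it vanishes on $\mathcal Z$. Since $\mathcal B\subset\mathcal Z$, the hypothesis $Z\mathfrak C\cap\mathcal B=\{0\}$ coincides with $\mathfrak C\cap\mathcal B=\{0\}$. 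Throughout I fix a compact base $B=\{T\in\mathfrak C:\ell(T)=1\}$ of $\mathfrak C$, where $\ell$ is continuous and positive on $\mathfrak C\setminus\{0\}$; then $B$ is compact, convex, and avoids $0$.

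\emph{Existence of a closed transverse form.} Assuming $\mathfrak C\cap\mathcal B=\{0\}$, the compact convex set $B$ is disjoint from the closed subspace $\mathcal B$. Hahn--Banach yields a continuous functional $L$ with $\inf_B L>\sup_{\mathcal B}L$; because $\mathcal B$ is a linear subspace, $L$ must vanish on it and the infimum over $B$ is strictly positive. Thus $L$ is a smooth $2$-form $\omega$ that kills $\mathcal B$ (so $\omega$ is closed) and is positive on $B$, hence on all of $\mathfrak C\setminus\{0\}$ by scaling, i.e. transverse to $\mathcal C$. This gives the second assertion.

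\emph{Non-triviality of cycles.} Suppose, for contradiction, that $Z\mathfrak C=\{0\}$, i.e. $\mathfrak C\cap\mathcal Z=\{0\}$. Separating $B$ from the closed subspace $\mathcal Z$ as above produces a transverse form vanishing on $\mathcal Z$, which by Theorem 17' is exact. Everything thus reduces to showing that an ample cone structure admits no exact transverse $2$-form. This is the step where ampleness is indispensable and is the main obstacle: the Schubert-variety condition is engineered precisely so that a form transverse to $\mathcal C_x$ is forced to be positive along enough of the directions spanning $S_\tau$ that it cannot globally be a total derivative. Concretely I would reduce ``no exact transverse form'' to a fibrewise statement that the dual cone of $\mathcal C_x$ meets the exact directions trivially, and then globalize; this is the content Sullivan extracts from ampleness. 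Granting it contradicts $Z\mathfrak C=\{0\}$, so $\mathcal C$ has non-trivial cycles.

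\emph{The cone $H\mathfrak C$ and its dual.} The cone $Z\mathfrak C=\mathfrak C\cap\mathcal Z$ is a closed subcone of the compact cone $\mathfrak C$, hence a compact convex cone with compact base $B\cap\mathcal Z$. Under $Z\mathfrak C\cap\mathcal B=\{0\}$ the homology map $h:\mathcal Z\to H_2(M;\mathbb R)$ is injective on $Z\mathfrak C$, so it carries this base to a compact convex set avoiding $0$; therefore $H\mathfrak C=h(Z\mathfrak C)$ is a compact convex cone. For the interior of $\breve{H\mathfrak C}$: if $\omega$ is a closed transverse form and $s=[T]\in H\mathfrak C\setminus\{0\}$ with $T\in Z\mathfrak C$, then $([\omega],s)=\omega(T)>0$, so $[\omega]\in\mathrm{Int}\,\breve{H\mathfrak C}$ by the characterization of the interior recalled above. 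Conversely, given $a\in\mathrm{Int}\,\breve{H\mathfrak C}$, choose any closed representative $\omega_0$; since the pairing with a cycle depends only on its class, $\omega_0>0$ on $Z\mathfrak C\setminus\{0\}$. A final Hahn--Banach separation—now correcting $\omega_0$ by an exact form, which leaves all pairings with cycles unchanged, by separating the compact convex image of $B$ from the negative axis in $\mathbb R\times(\mathcal E_2(M)/\mathcal Z)$—produces a cohomologous closed form positive on all of $\mathfrak C\setminus\{0\}$, i.e. a closed transverse representative of $a$. This identifies $\mathrm{Int}\,\breve{H\mathfrak C}$ with the classes of closed transverse forms and completes the plan.
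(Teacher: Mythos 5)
This theorem is quoted from Sullivan's paper; the authors give no proof of their own beyond naming the three ingredients (form--current duality, Hahn--Banach, compactness), so your proposal has to be judged against Sullivan's argument. Most of it is sound and is indeed the standard argument: separating the compact base $B$ of $\mathfrak C$ from the closed subspace $\mathcal B$ (resp.\ $\mathcal Z$) and using de Rham's theorem to read off closedness (resp.\ exactness) of the separating form gives the second assertion; the ``moreover'' part via the image of $B\cap\mathcal Z$ in $H_2(M;\mathbb R)$ and a second separation in $\mathbb R\times(\mathcal E_2(M)/\mathcal Z)$ is also essentially right (two small points: $h$ need not be injective on the cone $Z\mathfrak C$ --- what you actually need and what actually follows from $Z\mathfrak C\cap\mathcal B=\{0\}$ is only that $h(B\cap\mathcal Z)$ is compact, convex and avoids $0$; and in the final separation you must rule out the degenerate case where the separating functional has no component in the $\mathbb R$ factor, which again uses $Z\mathfrak C\neq\{0\}$).

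The genuine gap is the first assertion, which you explicitly defer (``Granting it\dots''): you must show that an ample cone structure admits no \emph{exact} transverse $2$-form, and your proposed route --- a ``fibrewise statement that the dual cone of $\mathcal C_x$ meets the exact directions trivially, then globalize'' --- cannot work, because exactness is not a pointwise condition and has no fibrewise avatar. The actual mechanism is different and is the one place ampleness and the closedness of $M$ do real work. First, a form $\omega$ transverse to an ample cone structure is nondegenerate at every point: if $\omega_x$ were degenerate, its radical would have dimension at least $2$ (the rank of a skew form is even), so the radical contains a $2$-plane $\tau=\mathrm{span}(v,w)$; then $\omega_x$ vanishes identically on $\mathrm{span}(S_\tau)=v\wedge T_xM+w\wedge T_xM$, while ampleness provides a nonzero element of $\mathcal C_x$ in that span on which transversality forces $\omega_x>0$ --- a contradiction. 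Second, a nondegenerate $2$-form on a closed $2n$-manifold cannot be exact, since $\int_M(d\alpha)^n=0$ by Stokes while $(d\alpha)^n$ would be a nowhere-vanishing top form. (This is exactly the step the paper invokes later in the special case $\mathcal C=\mathcal C(J)$: ``an almost symplectic form on a closed manifold cannot be exact.'') Without this argument the non-triviality of $Z\mathfrak C$ --- and with it the exclusion of the degenerate case in your last separation --- is unproved.
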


\subsubsection{Complex cycles} Let $J$ be an
% $C^0$
 almost complex structure on $M$. Let $\mathcal
C(J)$ be the cone structure of complex lines. As for any plane
$\tau$ and any nonzero vector $v$ in $\tau$ the complex line $(v,
Jv)$ intersects $\tau$ at least in $v$, $\mathcal C(J)$ is an ample
cone structure.

 In particular, a $2-$dimensional current is of
bidimension $(1,1)$  if it can be locally expressed as
$$T=\sum_{j,k}T^{jk}\frac{\sqrt{-1}}{2}X_j\wedge \bar X_k,$$
where $T^{jk}$ is a distribution and $\{X_1, X_2, \dots\}$ is a
basis of type $(1,0)$ vectors. Such a current $T$ is said to be
positive if $\sum T^{jk}w_j\bar w_k$ is a non-negative measure for
each $w\in \mathbb C^n$. Using geometric measure theory it is
observed in \cite{HL} that

\begin{lemma}\label{positive}
A structure current associated to $\mathcal C(J)$ is a positive
current of bidimension $(1,1)$.
\end{lemma}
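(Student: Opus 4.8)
The plan is to exploit the very definition of the structure cone: by construction $\mathfrak C$ is the closed convex cone generated by the Dirac currents attached to the complex lines $v\wedge Jv$, with $v\in T_xM\setminus\{0\}$ and $x\in M$. Writing $\mathcal P$ for the set of positive currents of bidimension $(1,1)$, it therefore suffices to prove two things: (a) each generating Dirac current lies in $\mathcal P$; and (b) $\mathcal P$ is a convex cone that is closed in the topology of $\mathcal E_2(M)$. Granting these, $\mathcal P$ contains every generator of $\mathfrak C$ and, being a closed convex cone, contains their closed convex hull, i.e. $\mathfrak C\subset\mathcal P$, which is exactly the assertion.

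For (a) I would argue pointwise. Fix $x$ and $0\neq v\in T_xM$; since a Dirac current is the evaluation of $2$-forms on the single $2$-vector $v\wedge Jv$ at $x$, it is enough to see that $v\wedge Jv$ is a positive $(1,1)$ $2$-vector. Put $X=\tfrac12(v-\sqrt{-1}Jv)$, the type $(1,0)$ part of $v$, so that $v=X+\bar X$ and $Jv=\sqrt{-1}(X-\bar X)$. A direct expansion then gives
\[
v\wedge Jv=(X+\bar X)\wedge\sqrt{-1}(X-\bar X)=-2\sqrt{-1}\,X\wedge\bar X=2\sqrt{-1}\,\bar X\wedge X .
\]
This is precisely the rank-one expression $\sum_{j,k}T^{jk}\tfrac{\sqrt{-1}}{2}X_j\wedge\bar X_k$ with a rank-one Hermitian coefficient matrix of the correct sign, i.e. a non-negative multiple of the rank-one generator of the positive $(1,1)$ cone in the normalization recalled before the statement. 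Hence the Dirac current is positive of bidimension $(1,1)$, and letting $x$ and $v$ vary preserves these pointwise properties.

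For (b), convexity is immediate: the bidimension $(1,1)$ condition is linear, and a non-negative combination of currents whose local coefficients $\sum T^{jk}w_j\bar w_k$ are non-negative measures again enjoys this property, so $\mathcal P$ is a convex cone. For closedness I would realize $\mathcal P$ as an intersection of closed conditions in the topology of currents. Bidimension $(1,1)$ is the vanishing of the continuous projections of a $2$-current onto the complementary bidimension components, hence a closed linear subspace. Positivity is equivalent to the requirement that $T$ pair non-negatively against every positive $(1,1)$-form; each such pairing $T\mapsto T\cdot\phi$ is continuous, so positivity is an intersection of closed half-spaces and is therefore closed. Thus $\mathcal P$ is closed and convex.

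The one genuinely nontrivial point — and the place where the geometric measure theory of \cite{HL} enters — is the closedness of the positivity condition under limits of currents, namely that a limit of positive $(1,1)$ currents is still represented by non-negative measures $\sum T^{jk}w_j\bar w_k$. Once positivity is recognized as non-negativity against a fixed cone of continuous functionals (the positive $(1,1)$-forms), this reduces to the standard fact that an intersection of closed half-spaces is closed; the substance of \cite{HL} is to make this correspondence concrete, identifying the structure currents of $\mathcal C(J)$ with positive $(1,1)$ currents. With (a) and (b) established the inclusion $\mathfrak C\subset\mathcal P$ follows, completing the argument.
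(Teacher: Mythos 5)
Your proof is correct, but it takes a genuinely different route from the paper's. The paper follows Harvey--Lawson and argues by geometric measure theory: a structure current $T$ has finite mass and hence a total variation decomposition $T=\|T\|\vec T$, with $\|T\|$ a non-negative Radon measure and $\vec T$ a $\|T\|$-measurable $2$-vector field, and a $2$-current is positive of bidimension $(1,1)$ precisely when $\vec T(x)$ lies in $\mathcal C(J)_x$; since structure currents are exactly those whose direction $\vec T$ is valued in $\mathcal C(J)$, the lemma follows. You instead argue by soft duality: the generating Dirac currents are positive by the pointwise identity $v\wedge Jv=-2\sqrt{-1}\,X\wedge\bar X=2\sqrt{-1}\,\bar X\wedge X$, and the set $\mathcal P$ of positive bidimension-$(1,1)$ currents is a closed convex cone because it is the intersection of the closed subspace $\{T:\pi_{(2,0),(0,2)}T=0\}$ with the closed half-spaces $\{T:T(\phi)\ge 0\}$ as $\phi$ ranges over (strongly) positive $(1,1)$-forms, so the closed convex cone $\mathfrak C(J)$ generated by the Dirac currents lands in $\mathcal P$. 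Your approach is more elementary---it never needs the mass decomposition---and it proves exactly the stated inclusion $\mathfrak C(J)\subset\mathcal P$. What the geometric measure theory argument buys is the converse identification as well: every positive current of bidimension $(1,1)$ arises this way, and it is this two-way correspondence, not just the inclusion, that is used later (e.g.\ in Theorem \ref{sullivan}, where nonemptiness of $\mathcal K_J^t$ is tested against positive $(1,1)$ boundaries). Two small points to make explicit in your write-up: (i) the equivalence between the coefficient definition of positivity ($\sum T^{jk}w_j\bar w_k$ a non-negative measure for every $w$) and non-negativity of $T(\phi)$ for all positive $(1,1)$-forms $\phi$ is what legitimizes your half-space description, and it should be stated (for bidimension $(1,1)$ the positive and strongly positive cones coincide, so this is harmless); (ii) with the normalization $\sum T^{jk}\tfrac{\sqrt{-1}}{2}X_j\wedge\bar X_k$ as literally printed in the paper your coefficient comes out as $-4$, so the positivity convention must be read with the conjugate ordering $\bar X\wedge X$, exactly as your final displayed expression indicates; this is a convention mismatch, not a gap.
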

The argument is as follows.
 Firstly, there exists a
non-negative Radon measure $\|T\|$ called the total variation
measure of $T$ and a $2-$vector $\vec T$, which is $\|T\|$
measurable, such that $T=\|T\|\vec T$. Secondly, a $2-$dimensional
current $T$ is positive if and only if $\vec T(x)$ belongs to
$\mathcal C(J)_{x}$ for each $x\in M$.

In this case a structure cycle is called a complex cycle.

%By Lemma \ref{positive},   $\mathfrak C(J)$ is an open subset of
%$\mathcal E^J_{1,1}(M)_{\mathbb R}$, hence the cone of complex
%cycles $Z\mathfrak C(J)$ is open in $\mathcal Z^J_{1,1}$. Thus we
%have

%\begin{lemma}\label{open2}
%$H\mathfrak C(J)$ is an open subset of $H^J_{1,1}(M)_\mathbb{R}$.
%\end{lemma}

More generally, we can consider the cone of complex cycles of
$\mathbb C-$dimension $p$ and denote it by $C_p$. In particular,
$C_1=Z\mathfrak C(J)$. Let $D_p\subset  C_p$ denote the subcone of
diffuse complex cycles, which consists of  currents in $ C_p$ given
by closed $2n-2p$ forms.

The following fact was   noted in \cite{S}.

\begin{lemma}
A $J-$compatible form $\alpha$ gives rises to  a diffuse complex
cycle in $D_{n-1}$. Conversely, a diffuse cycle in $D_{n-1}$ whose
interior support is all of $M$ is a $J-$compatible form.
\end{lemma}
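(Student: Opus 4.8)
The plan is to reduce both assertions to the linear algebra of positive $(1,1)$-forms, using for the cone structure of complex $(n-1)$-planes the exact analogue of Lemma \ref{positive}. Recall that an element of $C_{n-1}$ is a closed positive current of bidimension $(n-1,n-1)$, and that a diffuse one in $D_{n-1}$ is represented by a smooth closed $2$-form $\beta$ through $T_\beta(\phi)=\int_M\beta\wedge\phi$. Since $T_\beta$ has pure bidimension $(n-1,n-1)$ precisely when $\beta$ is of type $(1,1)$, the first step is to record the dictionary: a diffuse cycle in $D_{n-1}$ is the same datum as a closed real $(1,1)$-form $\beta$ whose current $T_\beta$ is positive, and, just as in Lemma \ref{positive}, $T_\beta$ is a positive $(n-1,n-1)$-current if and only if $\beta$ is a (weakly) positive $(1,1)$-form, i.e. $\beta(v,Jv)\ge 0$ for all $v$. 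This is the standard duality between positive $(1,1)$-forms and strongly positive $(n-1,n-1)$-forms (see \cite{HL}): pairing $T_\beta$ against the Dirac current of an oriented complex $(n-1)$-plane is, pointwise, the evaluation of $\beta$ on a strongly positive $(n-1,n-1)$-vector, and nonnegativity of all such evaluations is exactly weak positivity of $\beta$; here the two notions of positivity coincide because the bidegrees involved are $(1,1)$ and $(n-1,n-1)$.

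Granting the dictionary, the forward implication is immediate. A $J$-compatible form $\alpha$ is closed and $J$-invariant, hence a closed real $(1,1)$-form, and it is tamed, so $\alpha(v,Jv)>0$ for $v\ne 0$; thus $\alpha$ is strictly positive and $T_\alpha$ is a positive current of bidimension $(n-1,n-1)$. As $d\alpha=0$ the current $T_\alpha$ is closed, and as $\alpha$ is smooth it is diffuse; therefore $T_\alpha\in D_{n-1}$.

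For the converse I would start from a diffuse cycle $T_\beta\in D_{n-1}$ and use the dictionary to write $\beta$ as a closed real $(1,1)$-form which is weakly positive. The only remaining point is to upgrade weak positivity to nondegeneracy. This is where the hypothesis that the interior support of $T_\beta$ is all of $M$ enters: I read it as the statement that $\vec T_\beta(x)$ lies in the interior of the cone $\mathcal C_{n-1}(J)_x$ for every $x$ — equivalently that $\beta_x$ is positive definite, rather than merely positive semidefinite with a nontrivial kernel, at each point. Granting this, $g_\beta(v,w)=\beta(v,Jw)$ is a Riemannian metric, so $\beta$ is nondegenerate; being in addition closed, $J$-invariant and tamed, $\beta$ is a $J$-compatible symplectic form.

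The only non-formal ingredient, and the place I expect the real work to sit, is the positivity dictionary together with the correct interpretation of ``interior support''. The dictionary itself is pointwise linear algebra and is classical once one notes that weak and strong positivity agree in bidegrees $(1,1)$ and $(n-1,n-1)$; the subtle half of the lemma is the converse, where semi-positivity of a diffuse cycle must be promoted to the positive-definiteness that a symplectic form requires, and it is exactly the ``interior support equals $M$'' condition — forbidding the direction $\vec T_\beta(x)$ from lying on the boundary of $\mathcal C_{n-1}(J)_x$ — that supplies this.
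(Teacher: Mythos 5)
Your argument is correct and is essentially an expanded version of what the paper does: the paper only illustrates the pointwise linear algebra (the $n=2$ computation identifying $dx_1\wedge dy_1+dx_2\wedge dy_2$ with the sum of the two coordinate complex lines), which is exactly your positivity dictionary between positive real $(1,1)$-forms and strongly positive $(n-1,n-1)$-vectors. Your reading of ``interior support equals $M$'' as $\vec T_\beta(x)$ lying in the interior of $\mathcal C_{n-1}(J)_x$ for every $x$, i.e.\ pointwise positive-definiteness of $\beta$, is the intended (Sullivan's) meaning and is precisely what upgrades semi-positivity to nondegeneracy in the converse direction.
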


To illustrate why this is true we look at the case $n=2$.  In a
basis $\{z_1=x_1+\sqrt {-1}y_1,z_2=x_2+\sqrt {-1}y_2\}$ of $\mathbb
C^2$ the canonical form
$$dx_1\wedge dy_1+
dx_2\wedge dy_2$$ on $\mathbb R^4$ corresponds to the ray given by
the sum of the coordinate complex lines
$$\{\frac{\partial}{\partial
x_1}, \frac{\partial}{\partial y_1}=\sqrt{-1}
\frac{\partial}{\partial x_1}\}\quad \hbox{and} \quad
\{\frac{\partial}{\partial x_2}, \frac{\partial}{\partial
y_2}=\sqrt{-1} \frac{\partial}{\partial x_2}\}$$
 of $\mathbb
C^2$ in $\Lambda _2$.

 It was
also observed in \cite{S} that the  natural intersection pairing
between forms and currents satisfies $D_p\cdot C_q\subset
C_{p+q-n}$.
 In particular,  the diffuse complex cycles form a semi-ring under intersection.
 This implies that
if $\alpha_i, 1\leq i\leq n$ are $n$ $J-$compatible forms, then
$\prod_{i=1}^n\alpha_i>0$.
 This is true because, at each point $x$, the form $\wedge_{i=1}^n\alpha_i>0$ is
positive on a complex basis of the form $\{v_i, Jv_i\}$.

Finally, we write down the geometric part of $H\mathfrak C(J)$ for a
complex structure $J$.
 When $J$ is a complex structure,
a $p-$dimensional analytic subset of $M$ is in $\mathbb C_p$. Hence
 there are the following
geometric objects in $Z\mathfrak C(J)$: If $Y$ is a $p-$dimensional
analytic subset of $M$ and  $\omega$ is a K\"ahler form, then
$Y\wedge \omega^{p-1}\in Z\mathfrak C(J)$. Define
\begin{equation}\label{T}
T= \hbox{PD}((\mathcal K_J^c)^{n-1}) +HC\mathfrak Y^{1}\cdot
\hbox{PD}((\mathcal K_J^c)^{n-2})+ \cdots +HC\mathfrak Y^{n-1},
\end{equation}
 where $HC\mathfrak Y^{i}\subset
H_J^{i,i}(M)_{\mathbb R}$ is the cone generated by classes of
$(n-i)-$dimensional irreducible analytic subsets of $M$. $T\subset
H\mathfrak C(J)$ is considered to be the geometric part of
$H\mathfrak C(J)$.

We might be able to  define the analogue of $T$ for a general almost
complex structure replacing analytic subsets by either the zero sets
or the  images of locally pseudo-holomorphic maps.

\subsubsection{Tamed symplectic forms} \label{2.2}

 Notice
that a closed  form transverse to $\mathcal C(J)$ is nothing but a
$J-$tamed symplectic form. As an almost symplectic form on a closed
manifold cannot be exact, it was observed in \cite{S} that the cone
of complex cycles $Z\mathfrak C(J)$ is non-empty. Moreover,
$Z\mathfrak C(J)$ is a compact convex cone in the space of
$2-$currents.
%Points on extreme rays of the compact cone
%$Z\mathfrak C(J)$ are called irreducible complex cycles.
As a consequence of Theorem \ref{ample} we have

\begin{theorem} (\cite{S}) \label{sullivan}
Let $(M, J)$ be an almost complex $2n-$manifold. Then  $\mathcal
K_J^t$ is non-empty if and only if there is no non-trivial positive
current of bidimension $(1,1)$ which is a boundary. Moreover, under
the (non-degenerate) pairing between  $H^2(M;\mathbb R)$ and
$H_2(M;\mathbb R)$, $\mathcal K_J^t\subset H^2(M;\mathbb R)$ is the
interior of the dual cone of $H\mathfrak C(J)\subset H_2(M;\mathbb
R)$.
\end{theorem}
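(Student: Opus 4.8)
The plan is to derive this statement directly from Sullivan's general Theorem \ref{ample}, applied to the ample cone structure $\mathcal{C}(J)$ of complex lines. All of the substantive analysis — the existence of non-trivial structure cycles, the Hahn--Banach separation producing transverse closed forms, and the compactness of $H\mathfrak{C}$ — is already packaged inside Theorem \ref{ample}; what remains is to set up the dictionary translating its abstract language into the complex-geometric terms of the statement. Three translations are needed: (i) $\mathcal{C}(J)$ is ample, which was already observed; (ii) by Lemma \ref{positive} together with the characterization recalled just after it (a $2$-current $T=\|T\|\vec{T}$ is positive exactly when $\vec{T}(x)\in\mathcal{C}(J)_x$), the cone $\mathfrak{C}$ of structure currents coincides with the cone of positive currents of bidimension $(1,1)$; and (iii) a closed $2$-form is transverse to $\mathcal{C}(J)$ precisely when it tames $J$, so that the classes of closed transverse forms are exactly $\mathcal{K}_J^t$ (a $J$-tamed closed form is automatically non-degenerate, hence symplectic).

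For the first assertion I would argue both directions. For the forward direction, suppose $\omega$ is $J$-tamed and let $T$ be a non-trivial positive $(1,1)$-current that is a boundary, say $T=dS$. Since $\omega$ is closed, the duality pairing gives $\omega\cdot T=\omega\cdot dS=d\omega\cdot S=0$; but transversality makes $\omega$ strictly positive on the non-zero elements of $\mathfrak{C}$, whence $\omega\cdot T>0$, a contradiction, so no such $T$ exists. For the converse, I note that a boundary is automatically a cycle, so by translation (ii) a positive $(1,1)$-current which is a boundary is exactly a structure cycle lying in $\mathcal{B}$. Thus the hypothesis ``no non-trivial such current'' is literally the condition $Z\mathfrak{C}(J)\cap\mathcal{B}=\{0\}$ appearing in Theorem \ref{ample}, which then produces a closed form transverse to $\mathcal{C}(J)$, i.e. a $J$-tamed symplectic form, so $\mathcal{K}_J^t$ is non-empty.

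For the ``moreover'' part I would read off the final sentence of Theorem \ref{ample}: the interior of the dual cone $\breve{H\mathfrak{C}}\subset H^2(M;\mathbb{R})$ consists precisely of the classes of closed forms transverse to $\mathcal{C}(J)$. By translation (iii) these classes are exactly $\mathcal{K}_J^t$, while $\breve{H\mathfrak{C}}$ is by definition the dual cone of $H\mathfrak{C}(J)$ under the non-degenerate pairing between $H^2(M;\mathbb{R})$ and $H_2(M;\mathbb{R})$; this yields $\mathcal{K}_J^t=\hbox{Int}\,\breve{H\mathfrak{C}}$, as claimed. The only genuinely non-formal ingredient — and hence the main obstacle, once Theorem \ref{ample} is granted — is the identification in translation (ii) that every positive current of bidimension $(1,1)$ is itself a structure current of $\mathcal{C}(J)$, so that the two cones agree. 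This rests on the geometric-measure-theoretic decomposition $T=\|T\|\vec{T}$ and the pointwise positivity criterion recalled after Lemma \ref{positive}. Once this identification and the elementary remark that boundaries are closed are in hand, the theorem is a faithful restatement of Theorem \ref{ample}, and no further estimate is required.
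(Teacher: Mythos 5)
Your proposal is correct and follows essentially the same route as the paper, which states this theorem as a direct consequence of Theorem \ref{ample} applied to the ample cone structure $\mathcal C(J)$, using exactly the dictionary you describe (closed transverse forms are $J$-tamed symplectic forms, and structure currents are the positive currents of bidimension $(1,1)$ via Lemma \ref{positive} and the pointwise criterion on $\vec T$). Your explicit pairing argument for the forward direction ($\omega\cdot dS = d\omega\cdot S = 0$) supplies the one implication not literally contained in the statement of Theorem \ref{ample}, which the paper leaves implicit.
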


\subsection{Complex cycles and compatible symplectic forms}
Notice that a $2-$form $\omega$ is a $J-$compatible symplectic form
if and only if it is of type $(1,1)$, closed and tamed by $J$.

A $(1,1)-$form corresponds to a functional on $\mathcal
E_{1,1}(M)_{\mathbb R}$. By Lemma \ref{closed-exact}  a functional
on $\mathcal E_{1,1}(M)_{\mathbb R}$ gives rise to a closed form if
and only if it vanishes on $\pi_{1,1}\mathcal B$. By definition, a
functional on $\mathcal E_{1,1}(M)_{\mathbb R}$ gives rises to a
$J-$tamed form if and only if it is positive on $\mathcal
C(J)\backslash \{0\}$. Thus we have

\begin{lemma}\label{bij}
There is a bijection between  $J-$compatible symplectic forms and
functionals on $\mathcal E_{1,1}(M)_{\mathbb R}$ vanishing on
$\overline{\pi_{1,1}\mathcal B}$ and positive on $\mathcal
C(J)\backslash \{0\}$.
\end{lemma}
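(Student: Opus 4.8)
The plan is to read off the three defining properties of a $J$-compatible symplectic form one at a time, through the duality between $\Omega^{1,1}_J(M)_{\mathbb R}$ and $\mathcal E_{1,1}(M)_{\mathbb R}$, and to match each property with the corresponding condition on functionals. Recall from the opening of this subsection that $\omega$ is a $J$-compatible symplectic form precisely when it is a real form of type $(1,1)$, closed, and tamed by $J$. Since $\Omega^{1,1}_J(M)_{\mathbb R}$ is the topological dual of $\mathcal E_{1,1}(M)_{\mathbb R}$ and, by the reflexivity noted in the Forms-and-currents subsection, conversely $\mathcal E_{1,1}(M)_{\mathbb R}$ has $\Omega^{1,1}_J(M)_{\mathbb R}$ as its dual, a real $(1,1)$-form $\omega$ is the same datum as a continuous linear functional $L_\omega$ on $\mathcal E_{1,1}(M)_{\mathbb R}$ given by pairing. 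Thus the type-$(1,1)$ requirement is already absorbed into the ambient spaces: passing from forms to functionals on $\mathcal E_{1,1}(M)_{\mathbb R}$ automatically isolates the $(1,1)$ part.

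Next I would translate the remaining two conditions and check that the resulting assignments are mutually inverse. By Lemma \ref{closed-exact-bar}, a real $(1,1)$-form is closed exactly when the associated functional vanishes on $\overline{\pi_{1,1}\mathcal B}$; this accounts for the closedness condition. For tameness, I would use that a $J$-line $\mathrm{span}(v,Jv)$ determines a Dirac current which, by Lemma \ref{positive}, lies in $\mathcal C(J)\subset \mathcal E_{1,1}(M)_{\mathbb R}$, so that pairing against $L_\omega$ is meaningful; then $\omega$ is positive on that $J$-line if and only if $L_\omega$ is positive on the corresponding element of $\mathcal C(J)\setminus\{0\}$. Since the Dirac currents of the complex lines generate $\mathcal C(J)$, tameness of $\omega$ is equivalent to positivity of $L_\omega$ on all of $\mathcal C(J)\setminus\{0\}$. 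The two assignments $\omega\mapsto L_\omega$ and $L\mapsto \omega_L$ are visibly inverse to one another, since both are induced by the same nondegenerate pairing.

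The one point requiring genuine care — and the main obstacle, mild as it is — is confirming that a functional satisfying the two conditions really does produce a $J$-compatible \emph{symplectic} form rather than merely a closed $(1,1)$-form. Here I would observe that a real $(1,1)$-form is $J$-invariant, and that positivity on every $J$-line gives $\omega(v,Jv)>0$ for all $v\neq 0$; this forces nondegeneracy (if $\omega(v,\cdot)=0$ then $\omega(v,Jv)=0$, a contradiction), whence $\omega$ is a symplectic form tamed by and invariant under $J$, i.e. compatible with $J$. Thus the only substantive checks are this nondegeneracy/compatibility step and the identification of abstract positivity ``on $\mathcal C(J)\setminus\{0\}$'' with pointwise tameness; once Lemma \ref{positive} guarantees that the structure currents of $\mathcal C(J)$ are genuinely of bidimension $(1,1)$, the remainder is bookkeeping.
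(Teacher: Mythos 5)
Your proposal is correct and follows essentially the same route as the paper: identify real $(1,1)$-forms with functionals on $\mathcal E_{1,1}(M)_{\mathbb R}$ via duality, use Lemma \ref{closed-exact} (equivalently \ref{closed-exact-bar}) to translate closedness into vanishing on $\overline{\pi_{1,1}\mathcal B}$, and translate tameness into positivity on $\mathcal C(J)\setminus\{0\}$ via the Dirac currents of complex lines. Your explicit nondegeneracy check at the end is left implicit in the paper but is the standard observation that tameness forces nondegeneracy.
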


When $J$ is integrable there is the following K\"ahler criterion in
\cite{HL}.

\begin{theorem} \label{HL} Suppose $(M,J)$ is a compact complex manifold.
Then there exists a K\"ahler metric for $M$ if and only if
\begin{equation}\label{kahler}
\mathfrak C(J)\cap \pi_{1,1}\mathcal B={0}.
\end{equation}
\end{theorem}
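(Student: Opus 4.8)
The plan is to derive the K\"ahler criterion as an application of Sullivan's Theorem \ref{sullivan} combined with the duality results (Proposition \ref{important'}) and the closedness of complex structures (Proposition \ref{closed}). First I would observe that for an integrable $J$, Lemma \ref{positive} identifies the structure currents of $\mathcal C(J)$ with positive $(1,1)$-currents, so the cone $\mathfrak C(J)$ lives in $\mathcal E_{1,1}(M)_{\mathbb R}$. A K\"ahler metric is exactly a $J$-compatible symplectic form, so by Lemma \ref{bij} such a metric exists if and only if there is a functional on $\mathcal E_{1,1}(M)_{\mathbb R}$ vanishing on $\overline{\pi_{1,1}\mathcal B}$ and strictly positive on $\mathcal C(J)\setminus\{0\}$. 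Thus the theorem reduces to showing that such a separating functional exists precisely when $\mathfrak C(J)\cap\pi_{1,1}\mathcal B=\{0\}$.

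The forward direction is easy: if a K\"ahler form $\omega$ exists, it pairs positively with every nonzero element of $\mathfrak C(J)$ while vanishing on $\overline{\pi_{1,1}\mathcal B}\supset\pi_{1,1}\mathcal B$, so no nonzero element of $\mathfrak C(J)$ can lie in $\pi_{1,1}\mathcal B$, giving $\mathfrak C(J)\cap\pi_{1,1}\mathcal B=\{0\}$. For the converse I would argue by a Hahn-Banach separation in the locally convex space $\mathcal E_{1,1}(M)_{\mathbb R}$. Here is where Proposition \ref{closed} is crucial: since $J$ is a complex structure it is closed, so $\pi_{1,1}\mathcal B$ is a \emph{closed} subspace. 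Working in the quotient $\mathcal E_{1,1}(M)_{\mathbb R}/\pi_{1,1}\mathcal B$, the hypothesis $\mathfrak C(J)\cap\pi_{1,1}\mathcal B=\{0\}$ means the image of $\mathfrak C(J)$ is a convex cone not containing the origin in its closure. Because $M$ is compact, $\mathfrak C(J)$ has a compact base (it is a compact convex cone as noted after Lemma \ref{positive}), and its image in the Hausdorff quotient retains a compact base disjoint from $0$. Hahn-Banach then yields a continuous functional on the quotient that is strictly positive on the image of $\mathcal C(J)\setminus\{0\}$; pulling it back gives a functional on $\mathcal E_{1,1}(M)_{\mathbb R}$ vanishing on $\pi_{1,1}\mathcal B$ and positive on $\mathcal C(J)\setminus\{0\}$. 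By Lemma \ref{bij} this functional is a $J$-compatible symplectic form, i.e.\ a K\"ahler metric.

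The main obstacle, and the reason the integrability hypothesis is genuinely used, is the closedness of $\pi_{1,1}\mathcal B$. Without it, the separating functional produced by Hahn-Banach would only vanish on $\pi_{1,1}\mathcal B$ after passing to its closure $\overline{\pi_{1,1}\mathcal B}$, and one could not guarantee that $\mathfrak C(J)$ stays away from $\overline{\pi_{1,1}\mathcal B}$ rather than merely from $\pi_{1,1}\mathcal B$ itself—exactly the subtlety flagged in the Remark after Lemma \ref{45}. For an integrable $J$ Proposition \ref{closed} removes this gap, so that compactness of the base of $\mathfrak C(J)$ together with closedness of $\pi_{1,1}\mathcal B$ makes the Hahn-Banach separation valid with a functional that vanishes on the un-closed subspace. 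The remaining steps—verifying compactness of the base and transcribing the separating functional into a genuine closed positive $(1,1)$-form via Lemmas \ref{closed-exact} and \ref{bij}—are routine given the framework already established.
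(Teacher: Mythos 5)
The paper does not actually prove Theorem \ref{HL} --- it is quoted from Harvey--Lawson \cite{HL} --- but your argument is essentially the original one, and it is precisely the strategy the paper itself adapts to prove its generalization, Theorem \ref{generalization}: use Hahn--Banach to separate the compact base $\overline{\mathfrak C}(J)$ from the closed subspace $\pi_{1,1}\mathcal B$, and read off the separating functional as a closed positive $(1,1)$-form via Lemmas \ref{closed-exact} and \ref{bij}. Your identification of the closedness of $\pi_{1,1}\mathcal B$ (Proposition \ref{closed}, resting on the Harvey--Lawson closed-range result for $d:\Omega^{1,1}(M)_{\mathbb R}\to\Omega^3(M)$) as the point where integrability genuinely enters is also correct, so the proposal is sound and matches the intended proof.
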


By Lemma \ref{bij}, for any  almost K\"ahler complex structure,
 \eqref{kahler} is replaced by
\begin{equation}\label{almost-kahler}
\mathfrak C(J)\cap \overline{\pi_{1,1}\mathcal B}={0}.
\end{equation}
  We can further characterize $\mathcal
K_J^c$.
\begin{theorem} \label{generalization} Suppose $(M, J)$ is an almost complex manifold.
 If $\mathcal K_J^c$ is non-empty
then
 $\mathcal K_J^c\subset H_J^{1,1}(M)_{\mathbb R}$ is the interior
of the dual cone of $H\mathfrak C(J)\subset H^J_{1,1}(M)_{\mathbb
R}$.
\end{theorem}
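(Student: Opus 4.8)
The plan is to establish the statement as the $J$-compatible counterpart of Theorem \ref{sullivan}, by replaying Sullivan's duality argument (Theorem \ref{ample}) inside the dual pair consisting of the real $(1,1)$-currents $\mathcal E_{1,1}(M)_{\mathbb R}$ and the real $(1,1)$-forms $\Omega^{1,1}_J(M)_{\mathbb R}$, rather than in all $2$-currents and $2$-forms. Two translations drive the argument. First, by Lemma \ref{bij}, a $J$-compatible form is exactly a continuous functional on $\mathcal E_{1,1}(M)_{\mathbb R}$ that vanishes on $\overline{\pi_{1,1}\mathcal B}$ and is strictly positive on $\mathcal C(J)\setminus\{0\}$. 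Second, by Proposition \ref{important'} the space $H_J^{1,1}(M)_{\mathbb R}$ is dual to $\frac{\overline{\pi_{1,1}\mathcal Z}}{\overline{\pi_{1,1}\mathcal B}}$, into which the cone $H\mathfrak C(J)$ of classes of complex cycles maps, the induced pairing with a class $a$ recovering $\alpha\cdot Z$ for any representatives $\alpha,Z$. Since $\mathcal K_J^c$ is non-empty, $J$ is almost K\"ahler, so the criterion \eqref{almost-kahler}, namely $\mathfrak C(J)\cap\overline{\pi_{1,1}\mathcal B}=\{0\}$, holds; this is the precise analogue of Sullivan's hypothesis $Z\mathfrak C\cap\mathcal B=\{0\}$, and by the same compactness argument it shows that $H\mathfrak C(J)$ is a compact convex cone.

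I would first dispose of the inclusion $\mathcal K_J^c\subseteq\mathrm{Int}\,\breve{H\mathfrak C(J)}$. If $\omega$ is $J$-compatible then $\omega\cdot Z>0$ for every nonzero complex cycle $Z$ by positivity on $\mathcal C(J)$, and by \eqref{almost-kahler} no nonzero complex cycle lies in $\overline{\pi_{1,1}\mathcal B}$ (in particular none is a $(1,1)$-boundary), so this descends to $(a,s)>0$ for all $s\in H\mathfrak C(J)\setminus\{0\}$, where $a=[\omega]$. A fixed compatible form provides a class pairing uniformly positively with the compact base of $H\mathfrak C(J)$, so the characterization of the interior of the dual cone noted earlier applies verbatim and yields $a\in\mathrm{Int}\,\breve{H\mathfrak C(J)}$.

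The substance is the reverse inclusion. Given $a\in\mathrm{Int}\,\breve{H\mathfrak C(J)}$, I would choose a closed $(1,1)$-form $\alpha$ representing it and correct it by an exact $(1,1)$-form so that the result becomes positive on $\mathcal C(J)\setminus\{0\}$; by Lemma \ref{bij} this produces a $J$-compatible form in the class $a$. Interiority together with \eqref{almost-kahler} guarantees that $\alpha$ is already strictly positive on the compact base of the cone $Z\mathfrak C(J)$ of complex cycles, and the remaining task is to push $\alpha$ up to be positive on the full compact base of $\mathfrak C(J)$, i.e. on all structure currents and not only the cycles. This is exactly Sullivan's Hahn--Banach step: separate the compact convex set $\{T\in B:\alpha\cdot T\le 0\}$ from the cycles and feed the separating functional back as the exact correction, using the compactness of $\mathfrak C(J)$.

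I expect the main obstacle to be carrying out this separation so that the correcting functional is genuinely \emph{exact}, that is, vanishes on the closure $\overline{\pi_{1,1}\mathcal Z}$ and not merely on the smaller space $\mathcal Z_{1,1}$ of closed $(1,1)$-currents, which is precisely what preserves the cohomology class $a$. This is where the passage to the closures $\overline{\pi_{1,1}\mathcal B}$ and $\overline{\pi_{1,1}\mathcal Z}$ and the identification of Proposition \ref{important'} become indispensable; reconciling the three nested spaces $\mathcal Z_{1,1}\subseteq\pi_{1,1}\mathcal Z\subseteq\overline{\pi_{1,1}\mathcal Z}$ is the technical heart of the argument.
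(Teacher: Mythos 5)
Your overall framework coincides with the paper's: both inclusions are routed through Lemma \ref{bij} and Proposition \ref{important'}, inside the duality between $\mathcal E_{1,1}(M)_{\mathbb R}$ and real $(1,1)$-forms, with the compactness of the base of $\mathfrak C(J)$ and a Hahn--Banach separation doing the work; you also correctly isolate the key technical point that the correcting functional must annihilate all of $\overline{\pi_{1,1}\mathcal Z}$ and not merely the closed $(1,1)$-currents. The easy inclusion is fine.

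The gap is in the mechanism you propose for the hard inclusion. You want to separate the compact convex set $K=\{T\in \overline{\mathfrak C}(J):\alpha\cdot T\le 0\}$ from $\overline{\pi_{1,1}\mathcal Z}$, obtaining an exact $(1,1)$-form $\beta$ with $\beta\ge\epsilon$ on $K$, and then pass to $\alpha+t\beta$ for $t$ large. That last step does not close: $\beta$ may be negative on part of $\overline{\mathfrak C}(J)\setminus K$, and since the base is infinite dimensional there can be structure currents $T_n$ with $\alpha\cdot T_n\to c>0$ while $\beta\cdot T_n\sim -c/n$, so no finite $t$ makes $\alpha+t\beta$ positive on the whole base; compactness bounds $\alpha$ and $\beta$ individually but gives no control on the ratio $(\alpha\cdot T)/(-\beta\cdot T)$ where $\beta<0<\alpha$. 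The paper sidesteps this by performing a single separation rather than a correction: it takes the kernel hyperplane $L\subset\overline{\pi_{1,1}\mathcal Z}$ of the functional $\bar\sigma^{1,1}(e)$ --- a closed subspace of $\mathcal E_{1,1}(M)_{\mathbb R}$ containing $\overline{\pi_{1,1}\mathcal B}$ and disjoint from $\overline{\mathfrak C}(J)$ --- and separates this subspace from the entire compact base at once. The separating functional then automatically kills $\overline{\pi_{1,1}\mathcal B}$ (so it is a closed $(1,1)$-form), is positive on all of $\mathcal C(J)\setminus\{0\}$ (so it is compatible), and lies in the class $e$ up to positive rescaling, since its restriction to $\overline{\pi_{1,1}\mathcal Z}$ shares the kernel of $\bar\sigma^{1,1}(e)$. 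Your two-step scheme can be repaired, but essentially only by replacing the large-multiple argument with this one-shot separation (equivalently, separating the affine family $\alpha+\beta$, $\beta$ exact of type $(1,1)$, from the currents in $\mathfrak C(J)\cap\overline{\pi_{1,1}\mathcal Z}$), at which point it becomes the paper's proof.
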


  \begin{proof}

First of all, under the (possibly degenerate) pairing between
$H_J^{1,1}(M)_{\mathbb R}$ and $H^J_{1,1}(M)_{\mathbb R}$, $\mathcal
K_J^c$ is contained in the interior of the dual cone of $H\mathfrak
C(J)$ by the definition of the positive current and lemma
\ref{positive}.

It remains to prove that if $e\in H_J^{1,1}(M)_{\mathbb R}$ is
positive on $H\mathfrak C(J)$, then it is represented by a
$J-$compatible form. Consider the element $\bar\sigma(e)$  in
$(\frac{\overline{\pi_{1,1}\mathcal Z}}{\overline{\pi_{1,1}\mathcal
B}})^*$, which pulls back to a functional $L$ on
$\overline{\pi_{1,1}\mathcal Z}$ vanishing on
$\overline{\pi_{1,1}\mathcal B}$. Denote the kernel hyperplane of
$L$ in $\overline{\pi_{1,1}\mathcal Z}$ also by $L$. By our choice
of $e$, as subsets of $\mathcal E_{1,1}(M)_{\mathbb R}$, $L$ and
$\mathfrak C(J)\backslash \{0\}$ are disjoint.

Since $\overline{\pi_{1,1}\mathcal Z}$ is a closed subspace of
$\mathcal E_{1,1}(M)_{\mathbb R}$, any kernel hyperplane in
$\overline{\pi_{1,1}\mathcal Z}$ containing
$\overline{\pi_{1,1}\mathcal B}$ is also a closed subspace of
$\mathcal E_{1,1}(M)_{\mathbb R}$.

Choose a Hermitian metric $h$ and let $\psi$ be the
 associated real $(1,1)$ form. Set
 $$\overline {\mathfrak C}(J)=\{T\in \mathfrak C(J)| T(\psi)=1\}.$$
 It was shown in \cite{S} that $\overline {\mathfrak C}(J)$ is
%(weakly)
 compact in $\mathcal E_{1,1}(M)_{\mathbb R}$.

By a variation of the ``second separation theorem" (Schaefer
\cite{Sch} p65), we get a hyperplane $\mathcal L$ in
$\mathcal E_{1,1}(M)_{\mathbb R}$  containing $L$ and disjoint from
$\overline {\mathfrak C}(J)$. The hyperplane $\mathcal L$ determines
a functional $\alpha$ vanishing on $L$ and being positive on
$\overline {\mathfrak C}(J)$. By Lemma \ref{bij}, $\alpha$ is a
$J-$compatible symplectic form. Moreover, by construction we have
$[\alpha]=e$.
\end{proof}

Since $\mathcal K_J^c$ is open in $H_J^{1,1}(M)_{\mathbb R}$, via
the pairing between $H_J^{1,1}(M)_{\mathbb R}$ and
$H^J_{1,1}(M)_{\mathbb R}$,  $H_J^{1,1}(M)_{\mathbb R}$ injects into
the dual space of $H^J_{1,1}(M)_{\mathbb R}$. Thus we have
\begin{cor} Suppose $(M, J)$ is an almost complex manifold with $J$ almost
K\"ahler, i.e.  $\mathcal K_J^c\ne \emptyset$. Then $$\dim
H_J^{1,1}(M)_{\mathbb R}\leq \dim H^J_{1,1}(M)_{\mathbb R}.$$
\end{cor}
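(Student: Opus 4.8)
The plan is to recast the stated inequality as the injectivity of a pairing map into a dual space. The Poincar\'e pairing $H^2(M;\mathbb R)\times H_2(M;\mathbb R)\to\mathbb R$ is non-degenerate, and restricting its first argument to $H_J^{1,1}(M)_{\mathbb R}$ and its second to $H^J_{1,1}(M)_{\mathbb R}$ gives a bilinear form, hence a linear map $\Phi:H_J^{1,1}(M)_{\mathbb R}\to (H^J_{1,1}(M)_{\mathbb R})^*$. Once $\Phi$ is injective, the inequality $\dim H_J^{1,1}(M)_{\mathbb R}\le\dim (H^J_{1,1}(M)_{\mathbb R})^*=\dim H^J_{1,1}(M)_{\mathbb R}$ is immediate, so the whole statement reduces to $\ker\Phi=0$. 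The hypothesis $\mathcal K_J^c\neq\emptyset$ enters through openness: by Lemma \ref{opensubset}, $\mathcal K_J^c$ is a nonempty open subset of $H_J^{1,1}(M)_{\mathbb R}$, so for any $e\in\ker\Phi$ and any $\omega_0\in\mathcal K_J^c$ the perturbations $\omega_0+te$ remain compatible for all small $t$. Since $\Phi$ is linear, $\Phi(\omega_0+te)=\Phi(\omega_0)$, so proving $\ker\Phi=0$ is equivalent to showing that distinct classes of $\mathcal K_J^c$ induce distinct functionals on $H^J_{1,1}(M)_{\mathbb R}$; it is this injectivity on the open cone that the geometry should supply.

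To detect a hypothetical kernel element I would use the diffuse complex cycles of Section 2.2. Each class in $\mathcal K_J^c$ has a $J$-compatible representative $\omega$, and $\omega^{n-1}$, viewed through integration $\phi\mapsto\int_M\omega^{n-1}\wedge\phi$, defines a positive closed current of bidimension $(1,1)$; its class $[\omega^{n-1}]$ therefore lies in $H^J_{1,1}(M)_{\mathbb R}$ (indeed in $H\mathfrak C(J)$) and self-detects $\omega$, since $\langle\omega,[\omega^{n-1}]\rangle=\int_M\omega^n>0$. Now suppose $e\in\ker\Phi$ and fix $\omega_0\in\mathcal K_J^c$. For small $t$ the class $\omega_0+te$ is again compatible, so $\langle e,[(\omega_0+te)^{n-1}]\rangle=0$; expanding in $t$ yields $\int_M e\wedge\omega_0^{n-1}=0$, together with the higher mixed integrals. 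Letting $\omega_0$ range over the open cone $\mathcal K_J^c$ shows that $e$ annihilates an open family of structure cycles sweeping out a piece of $H\mathfrak C(J)\subset H^J_{1,1}(M)_{\mathbb R}$.

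The main obstacle is the last step: upgrading ``$e$ annihilates this positivity-generated family of $(1,1)$-cycles'' to ``$e=0$,'' which is exactly the non-degeneracy of the induced form on $H_J^{1,1}(M)_{\mathbb R}$. I expect this to be the crux, because the dual-cone description of Theorem \ref{generalization} does not by itself force it (a pairing may have a dual cone with nonempty interior yet be degenerate in its first slot); it is the openness of $\mathcal K_J^c$ combined with the self-detecting cycles above that must be exploited. In the cases of primary interest the difficulty dissolves cleanly. When $n=2$, the decomposition $H^2(M;\mathbb R)=H_J^{1,1}\oplus H_J^{(2,0),(0,2)}$ of \cite{DLZ} (see Remark \ref{last}) is cup-orthogonal, since a product of type $(1,1)$ with type $(2,0)+(0,2)$ vanishes in complex dimension two; hence the cup product is non-degenerate on $H_J^{1,1}(M)_{\mathbb R}$, and as a real $(1,1)$-form already is a closed bidimension-$(1,1)$ current one has $\mathrm{PD}\,H_J^{1,1}\subset H^J_{1,1}$, so $e\in\ker\Phi$ forces $\int_M e\wedge f=0$ for all $f\in H_J^{1,1}$, putting $e$ in the radical of the cup form, whence $e=0$. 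In the K\"ahler case the same vanishing follows from the Hodge--Riemann and Khovanskii--Teissier positivity. In each situation $\Phi$ is injective and the dimension inequality follows.
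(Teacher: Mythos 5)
Your reduction of the corollary to the injectivity of the pairing map $\Phi\colon H_J^{1,1}(M)_{\mathbb R}\to (H^J_{1,1}(M)_{\mathbb R})^{*}$ is exactly the paper's route: the proof in the text consists of the single assertion that, $\mathcal K_J^c$ being a nonempty open subset of $H_J^{1,1}(M)_{\mathbb R}$ described through the pairing with $H^J_{1,1}(M)_{\mathbb R}$ (Lemma \ref{opensubset} and Theorem \ref{generalization}), the space $H_J^{1,1}(M)_{\mathbb R}$ injects into the dual of $H^J_{1,1}(M)_{\mathbb R}$. So your framing, and your identification of where the hypothesis $\mathcal K_J^c\neq\emptyset$ enters, match the paper. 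The trouble is that you stop precisely at the decisive step and say so. What must be shown is that a nonzero class in $H_J^{1,1}(M)_{\mathbb R}$ is detected by some honestly closed current of bidimension $(1,1)$, i.e.\ by an element of $\mathcal Z_{1,1}$, not merely by the $(1,1)$-component of a closed $2$-current: Lemma \ref{closed-exact} only supplies the latter, since a non-exact closed $(1,1)$-form fails to vanish on $\pi_{1,1}\mathcal Z$, but an element of $\pi_{1,1}\mathcal Z$ need not itself be closed and so need not define a class in $H^J_{1,1}(M)_{\mathbb R}$. Your detecting family $[\omega_t^{n-1}]$ only yields the vanishing of the mixed numbers $\int_M e^{k}\wedge\omega_0^{n-k}$ for $k\geq 1$, and you acknowledge that you cannot conclude $e=0$ from this.

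The two cases you do close --- $n=2$ via the cup-orthogonal decomposition of \cite{DLZ}, and integrable K\"ahler via Hodge--Riemann --- are strictly weaker than the statement, which concerns an arbitrary almost K\"ahler $J$ in any dimension; for non-integrable $J$ with $n\geq 3$ neither hard Lefschetz nor the Hodge--Riemann relations are available, so neither of your closing arguments applies there. The ingredient the paper has set up for exactly this purpose, and which your proposal never invokes, is the Hahn--Banach duality of Proposition \ref{important'}, the isomorphism $\bar\sigma^{1,1}\colon H_J^{1,1}(M)_{\mathbb R}\to\bigl(\overline{\pi_{1,1}\mathcal Z}/\overline{\pi_{1,1}\mathcal B}\bigr)^{*}$, which is what the openness of $\mathcal K_J^c$ and the dual-cone description of Theorem \ref{generalization} are meant to be combined with; this is how the paper passes from functionals on spaces of currents to the finite-dimensional pairing. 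As written, your argument leaves the general case of the corollary unproved.
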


The natural questions are whether $H\mathfrak C(J)$ is also dual to
$\mathcal K_J^c$. If it is true, then when $H\mathfrak C(J)$ is open
in $H^J_{1,1}(M)_{\mathbb R}$, we have the equality. This is the
case for a K\"ahler structure.

 \subsection{Comparing  $\mathcal K_J^c$ and $ \mathcal K_J^t$}

 We start with

\begin{prop}\label{general} Suppose $(M, J)$ is an almost complex manifold with $J$ almost
K\"ahler, i.e.  $\mathcal K_J^c\ne \emptyset$. Then  we have
\begin{equation}\label{ct'}
\mathcal K^t_J \cap H_J^{1,1}(M)_{\mathbb R}=\mathcal K^c_J,
\end{equation}
and
\begin{equation}\label{sup'}\mathcal
K_J^c+H_J^{(2,0),(0,2)}(M)_{\mathbb R}\subset \mathcal K_J^t.
\end{equation}
\end{prop}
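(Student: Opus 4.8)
The plan is to read off both inclusions from the dual--cone descriptions of the two cones already established in Theorems \ref{sullivan} and \ref{generalization}, together with the elementary fact that a form of type $(2,0)+(0,2)$ annihilates every $J$--complex line. Throughout I would use that, since $\mathcal K_J^c\neq\emptyset$, the tamed cone $\mathcal K_J^t$ is also non-empty, so by Theorem \ref{sullivan} no nontrivial complex cycle bounds; as $Z\mathfrak C(J)\neq\{0\}$ (an almost symplectic form on a closed manifold cannot be exact), this forces $H\mathfrak C(J)\neq\{0\}$. In particular the interior of the dual cone of $H\mathfrak C(J)$ is the set of strictly positive classes, exactly as in the remark preceding Theorem \ref{ample}.

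For \eqref{ct'}, the inclusion $\mathcal K_J^c\subset \mathcal K_J^t\cap H_J^{1,1}(M)_{\mathbb R}$ is immediate: a $J$--compatible form is $J$--tamed, so $\mathcal K_J^c\subset\mathcal K_J^t$, while Lemma \ref{opensubset} already gives $\mathcal K_J^c\subset H_J^{1,1}(M)_{\mathbb R}$. For the reverse inclusion I would take $a\in\mathcal K_J^t\cap H_J^{1,1}(M)_{\mathbb R}$. By Theorem \ref{sullivan}, $a$ lies in the interior of the dual cone of $H\mathfrak C(J)$ in $H^2(M;\mathbb R)$, hence $\langle a,s\rangle>0$ for every $s\in H\mathfrak C(J)\setminus\{0\}$. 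Because $H\mathfrak C(J)\subset H^J_{1,1}(M)_{\mathbb R}$ by Lemma \ref{positive}, this pairing is the restriction of the full pairing between $H^2(M;\mathbb R)$ and $H_2(M;\mathbb R)$, so $a\in H_J^{1,1}(M)_{\mathbb R}$ is strictly positive on $H\mathfrak C(J)\setminus\{0\}$. The constructive half of the proof of Theorem \ref{generalization} then produces a $J$--compatible form representing $a$, i.e. $a\in\mathcal K_J^c$. Equivalently, one can simply intersect the description $\mathcal K_J^t=\{a\in H^2(M;\mathbb R)\,|\,\langle a,s\rangle>0\ \hbox{for all}\ s\in H\mathfrak C(J)\setminus\{0\}\}$ with $H_J^{1,1}(M)_{\mathbb R}$ and compare with the corresponding description of $\mathcal K_J^c$ from Theorem \ref{generalization}.

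For \eqref{sup'}, I would fix $a\in\mathcal K_J^c$ and $b\in H_J^{(2,0),(0,2)}(M)_{\mathbb R}$, choose a $J$--compatible form $\omega$ with $[\omega]=a$, and choose a closed real form $\psi\in\mathbf Z_J^{(2,0),(0,2)}$ with $[\psi]=b$, and claim that $\omega+\psi$ is a $J$--tamed symplectic form representing $a+b$. It is closed, so only tameness need be checked. The key point is that $\psi\in\Omega_J^{(2,0),(0,2)}(M)_{\mathbb R}=\Omega_J^{2,-}(M)$ is $J$--anti-invariant: for any nonzero $v$ one has $\psi(v,Jv)=\psi(Jv,J(Jv))\cdot(-1)^{-1}$, which upon applying anti-invariance and antisymmetry collapses to $\psi(v,Jv)=-\psi(v,Jv)$, hence $\psi(v,Jv)=0$. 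Thus $\psi$ vanishes on every $J$--complex line, and $(\omega+\psi)(v,Jv)=\omega(v,Jv)>0$ for all $v\neq 0$; so $\omega+\psi$ is tamed, in particular nondegenerate and symplectic, giving $a+b=[\omega+\psi]\in\mathcal K_J^t$.

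The genuinely substantive content is carried entirely by Theorems \ref{sullivan} and \ref{generalization}; granting those, the only delicate point in \eqref{ct'} is the matching of the two ``interior of the dual cone'' descriptions across the subspace $H_J^{1,1}(M)_{\mathbb R}$, which is why one must invoke the strict--positivity characterization of the interior and verify $H\mathfrak C(J)\neq\{0\}$. I expect this bookkeeping, rather than any analytic difficulty, to be the main thing to get right. Statement \eqref{sup'} is then essentially formal once one records that anti-invariant $2$--forms die on complex lines.
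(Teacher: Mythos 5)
Your proposal is correct and follows essentially the same route as the paper: \eqref{ct'} is deduced by matching the dual-cone descriptions of $\mathcal K_J^t$ and $\mathcal K_J^c$ from Theorems \ref{sullivan} and \ref{generalization}, and \eqref{sup'} by adding a closed anti-invariant form $\psi$ to a compatible form and using $\psi(v,Jv)=0$. You merely supply more detail than the paper (which states \eqref{ct'} as a ``direct consequence''), and your positivity bookkeeping and the vanishing computation, though slightly garbled in the intermediate display, land on the right conclusions.
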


\begin{proof}
\eqref{ct'} is a direct consequence of Theorem \ref{generalization}
and Theorem \ref{sullivan}.

 For the inclusion \eqref{sup'} we can actually explicitly find
$J-$tamed symplectic forms in
$$\mathcal
K_J^c+H_J^{(2,0),(0,2)}(M)_{\mathbb R}.$$
 This is
discussed in a slightly different context in \cite{Pe} and
\cite{Dr}. The point is,  $\beta\in \Omega_J^{(2,0),
(0,2)}(M)_{\mathbb R}$ vanishes on any complex line, i.e. at any
point $x\in M$, $\beta(v, Jv)=0$ for $v\in T_xM$. Thus for
 an almost K\"ahler form  $\alpha\in \Omega_J^{1,1}(M)_{\mathbb R}$,
 the form
$\alpha+t\beta$, restricted to a complex line,  is equal to $\alpha$
and hence is positive.  If $\beta$ is also closed, then, as already
observed in \cite{Dr2}, $\alpha+t\beta$ is a $J-$tamed symplectic
form for any $t\in \mathbb R$.

\end{proof}

As mentioned in Remark \ref {last},  such a $J$ is actually
$C^{\infty}$ pure.

We are ready to prove Theorem \ref{extension}.
\begin{proof}
By Proposition \ref{general}, it suffices to prove the reverse
inclusion
\begin{equation}\label{sub'}\mathcal
K_J^c+H_J^{(2,0),(0,2)}(M)_{\mathbb R}\supset \mathcal K_J^t.
\end{equation}
Since  $J$ is assumed to be $C^{\infty}$ full, by Theorem
\ref{generalization}, we  have
$$\hbox{Int } \breve{H\mathfrak C(J)} \subset \mathcal
K_J^c+H_J^{(2,0),(0,2)}(M)_{\mathbb R}.$$  In addition, by Theorem
\ref{sullivan}, $\hbox{Int } \breve{H\mathfrak C(J)}=
 \mathcal K_J^t$. Thus we have the inclusion \eqref{sub'}.
\end{proof}

As a consequence of Theorem \ref{extension} and Proposition
\ref{complex1}  we have

\begin{cor} \label{first} Suppose J is a complex  structure on $M$. If
 $\mathcal K_J^c\ne \emptyset$, i.e. $J$ is K\"ahler,  then
\begin{equation}\label{tc}
\mathcal K_J^t= \mathcal K_J^c+(H_{\bar\partial}^{2,0}(M)\oplus
H_{\bar\partial}^{0,2}(M))_{\mathbb R}.
\end{equation}
In particular, if $\mathcal K_J^c\ne \emptyset$, then
\begin{equation}\label{ct}
\mathcal K_J^t\cap H_{\bar\partial}^{1,1}(M)_{\mathbb{R}}=\mathcal
K_J^c.
\end{equation}
\end{cor}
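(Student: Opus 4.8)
The plan is to derive both \eqref{tc} and \eqref{ct} formally from the two deep results already in hand, Theorem \ref{extension} and Proposition \ref{complex1}; the corollary itself requires no new geometric analysis. First I would unwind the hypothesis: $\mathcal K_J^c\ne\emptyset$ says precisely that the complex structure $J$ is K\"ahler. By Proposition \ref{complex1}, a K\"ahler complex structure is $C^{\infty}$ pure and full, and moreover its De Rham--type subgroups are identified with the real Dolbeault groups,
$$H_J^{1,1}(M)_{\mathbb R}=H_{\bar\partial}^{1,1}(M)_{\mathbb R},\qquad H_J^{(2,0),(0,2)}(M)_{\mathbb R}=(H_{\bar\partial}^{2,0}(M)\oplus H_{\bar\partial}^{0,2}(M))_{\mathbb R}.$$

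Next, since $J$ is in particular $C^{\infty}$ full and $\mathcal K_J^c$ is non-empty, Theorem \ref{extension} applies verbatim and gives $\mathcal K_J^t=\mathcal K_J^c+H_J^{(2,0),(0,2)}(M)_{\mathbb R}$. Substituting the second identification from Proposition \ref{complex1} into the right-hand side yields \eqref{tc} at once. This is the entire content of the first assertion.

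For the refinement \eqref{ct} I would use the $C^{\infty}$ pure and full property together with \eqref{tc}. The inclusion $\mathcal K_J^c\subset\mathcal K_J^t\cap H_{\bar\partial}^{1,1}(M)_{\mathbb R}$ is immediate, since a $J$-compatible form is $J$-tamed and of type $(1,1)$, so its class lies in both factors (recall $\mathcal K_J^c\subset H_J^{1,1}(M)_{\mathbb R}=H_{\bar\partial}^{1,1}(M)_{\mathbb R}$ by Lemma \ref{opensubset}). For the reverse inclusion, take $[\omega]\in\mathcal K_J^t\cap H_{\bar\partial}^{1,1}(M)_{\mathbb R}$ and use \eqref{tc} to write $[\omega]=a+b$ with $a\in\mathcal K_J^c\subset H_{\bar\partial}^{1,1}(M)_{\mathbb R}$ and $b\in(H_{\bar\partial}^{2,0}(M)\oplus H_{\bar\partial}^{0,2}(M))_{\mathbb R}$. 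Then $b=[\omega]-a$ also lies in $H_{\bar\partial}^{1,1}(M)_{\mathbb R}$, so by the direct sum decomposition of Lemma \ref{fptd} (the $C^{\infty}$ pureness of $J$) we get $b=0$ and hence $[\omega]=a\in\mathcal K_J^c$. Alternatively, \eqref{ct} drops out directly by combining \eqref{ct'} of Proposition \ref{general} with the identification $H_J^{1,1}(M)_{\mathbb R}=H_{\bar\partial}^{1,1}(M)_{\mathbb R}$.

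I do not expect any genuine obstacle, because all the substantive work is delegated to the cited results: Proposition \ref{complex1} rests on Hodge theory and the $\partial\bar\partial$-lemma to identify the groups $H_J^{\bullet}(M)_{\mathbb R}$ with the Dolbeault groups and to supply the direct sum decomposition of $H^2(M;\mathbb R)$, while Theorem \ref{extension} rests on Sullivan's duality between the tamed cone and cones of structure cycles. Given these, the corollary is purely formal; the one point to state carefully is that the standing hypothesis $\mathcal K_J^c\ne\emptyset$ is exactly what licenses invoking the K\"ahler case of Proposition \ref{complex1} (rather than only the $n=2$ case), so that both the $C^{\infty}$ fullness needed for Theorem \ref{extension} and the Dolbeault identifications are available in every dimension.
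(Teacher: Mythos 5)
Your proposal is correct and follows essentially the same route as the paper, which derives the corollary directly by combining Theorem \ref{extension} (applicable because a K\"ahler $J$ is $C^{\infty}$ full by Proposition \ref{complex1}) with the identifications \eqref{iden} of the groups $H_J^{1,1}(M)_{\mathbb R}$ and $H_J^{(2,0),(0,2)}(M)_{\mathbb R}$ with the real Dolbeault groups. Your derivation of \eqref{ct} from \eqref{tc} via $C^{\infty}$ pureness (or, equivalently, via \eqref{ct'} of Proposition \ref{general}) fills in the ``in particular'' step exactly as intended.
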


\section{Complex structures}

In this section we assume that the almost complex structure $J$ is
integrable. For such a $J$  we say that it is K\"ahler if $\mathcal
K_J^c$ is non-empty.
\subsection{K\"ahler complex structures}

\subsubsection{Tamed cones and compatible cones}

 If $J$ is K\"ahler we have the following type decomposition,
\begin{equation}\label{type1}
\begin{array}{ll}
H^2(M;\mathbb C)&=H_{\bar\partial}^{1,1}(M)\oplus
H_{\bar\partial}^{2,0}(M)\oplus H_{\bar\partial}^{0,2}(M),\cr
H^2(M;\mathbb R)&=H_{\bar\partial}^{1,1}(M)_{\mathbb R}\oplus
(H_{\bar\partial}^{2,0}(M)\oplus H_{\bar\partial}^{0,2}(M))_{\mathbb
R},
\end{array}
\end{equation}
where $H_{\bar\partial}^{p,q}$ denotes the $(p,q)-$Dolbeault
cohomology group.
 Notice that, $H\mathfrak C(J)$ vanishes on $H_{\bar\partial}^{2,0}(M)\oplus
H_{\bar\partial}^{0,2}(M)$ and thus can be considered to be a
subcone of $H^{n-1,n-1}_{\bar\partial}(M)_{\mathbb R}$. And by the
$J-$invariance,
$$\mathcal
K_J^c \subset H_{\bar\partial}^{1,1}(M)_{\mathbb R}.$$ Moreover, we
have the following beautiful description of $\mathcal K_J^c$.

\begin{theorem} \label{DP} (\cite{DP}) Let $J$ be a K\"ahler complex structure on
a real $2n-$dimensional manifold $M$. Let $T \subset H\mathfrak
C(J)\subset H^{n-1,n-1}_{\bar \partial}(M)_{\mathbb R}$ be defined
by (\ref{T}). Then the interior of the dual cone of $T$ in
$H_{\bar\partial}^{1,1}(M)_{\mathbb R}$ equals $\mathcal K_J^c$.
\end{theorem}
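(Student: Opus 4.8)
The plan is to prove the two inclusions $\mathcal K_J^c \subseteq \mathrm{Int}\,\breve{T}$ and $\mathrm{Int}\,\breve{T} \subseteq \mathcal K_J^c$ separately, isolating the deep input in the second. Throughout I would use the Poincar\'e duality identification $H\mathfrak C(J) \subset H^J_{1,1}(M)_{\mathbb R} \cong H^{n-1,n-1}_{\bar\partial}(M)_{\mathbb R}$, under which the pairing of Theorem \ref{generalization} becomes the cup product pairing $H^{1,1}_{\bar\partial} \times H^{n-1,n-1}_{\bar\partial} \to \mathbb R$ appearing in the statement, so that ``dual cone of $T$ in $H^{1,1}_{\bar\partial}(M)_{\mathbb R}$'' is literally the dual of the earlier Theorem \ref{generalization}.

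The first inclusion I would obtain by pure convex duality. By construction $T \subset H\mathfrak C(J)$, its generators being the classes of the complex cycles $Y \wedge \omega^{p-1}$; enlarging a cone shrinks its dual, so $\breve{H\mathfrak C(J)} \subseteq \breve{T}$ and hence $\mathrm{Int}\,\breve{H\mathfrak C(J)} \subseteq \mathrm{Int}\,\breve{T}$. By Theorem \ref{generalization} the left-hand side is exactly $\mathcal K_J^c$, which yields $\mathcal K_J^c \subseteq \mathrm{Int}\,\breve{T}$. Equivalently one checks directly that a K\"ahler class pairs strictly positively with every generator of $T$, since $\int_Y \alpha \wedge \omega^{\dim_{\mathbb C} Y - 1} > 0$ for K\"ahler $\alpha,\omega$ by positivity of integrals of positive forms over analytic subsets; combined with Lemma \ref{opensubset} this places $\mathcal K_J^c$ in the interior of $\breve{T}$.

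The second inclusion is where the Nakai--Moishezon--type theorem of \cite{DP} enters, and this is the main obstacle. Unwinding the pairing, membership of $\alpha$ in $\mathrm{Int}\,\breve{T}$ says precisely that $\int_Y \alpha \wedge \omega^{\dim_{\mathbb C} Y - 1} > 0$ for every irreducible analytic subset $Y$ and every K\"ahler class $\omega$, together with $\int_C \alpha > 0$ on curves. These are \emph{linear} positivity conditions on $\alpha$, whereas Demailly--Paun characterize the K\"ahler cone as the connected component, containing the known K\"ahler classes, of the cone $\mathcal P = \{\beta : \int_Y \beta^{\dim_{\mathbb C} Y} > 0 \ \forall Y\}$ cut out by \emph{nonlinear} numerical positivity. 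The bridge I would use is a convexity-plus-connectedness argument: fix $\omega_0 \in \mathcal K_J^c$ and consider the segment $\alpha_t = (1-t)\omega_0 + t\alpha$, which lies in the convex set $\mathrm{Int}\,\breve{T}$ for all $t \in [0,1]$ since both endpoints do. As $\alpha_0 = \omega_0$ is K\"ahler, it suffices to show the whole segment lies in $\mathcal P$; then connectedness of the segment keeps it in the K\"ahler component, so $\alpha_1 = \alpha$ is K\"ahler.

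The crux, and the part genuinely requiring \cite{DP}, is thus the implication $\mathrm{Int}\,\breve{T} \subseteq \mathcal P$: that linear positivity of $\alpha$ against each geometric generator $Y \wedge \omega^{p-1}$ forces the numerical positivity $\int_Y \alpha^{\dim_{\mathbb C} Y} > 0$. In the dual picture this amounts to saying that the closed convex cone generated by $T$ already detects, via positive functionals, all of $H\mathfrak C(J)$, so that positivity against subvarieties propagates to positivity against \emph{every} closed positive $(1,1)$-current. This cannot follow from the soft Hahn--Banach and Sullivan-type arguments used earlier (indeed, even on surfaces it is exactly the strength of Nakai--Moishezon), and is precisely what the mass-concentration and current-regularization techniques of Demailly--Paun supply. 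Accordingly I would not reprove it, but quote their theorem in the component form above and verify that its connected-component statement matches the convex cone $\mathrm{Int}\,\breve{T}$ produced by the duality, yielding the desired equality $\mathrm{Int}\,\breve{T} = \mathcal K_J^c$.
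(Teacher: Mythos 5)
First, a point of comparison: the paper does not prove this statement at all --- Theorem \ref{DP} is quoted directly from \cite{DP} with no argument given, so you are attempting a derivation the paper never carries out. Within your attempt, the first inclusion $\mathcal K_J^c\subseteq \mathrm{Int}\,\breve{T}$ is fine, either by monotonicity of dual cones applied to $T\subset H\mathfrak C(J)$ together with Theorem \ref{generalization}, or by the direct positivity $\int_Y\alpha\wedge\omega^{p-1}>0$ for K\"ahler $\alpha$, $\omega$.

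The gap is in the reverse inclusion. Your segment argument is valid as far as it goes: if the whole segment lies in $\mathcal P=\{\beta: \int_Y\beta^{\dim_{\mathbb C}Y}>0 \hbox{ for all irreducible } Y\}$, then convexity, connectedness and Theorem \ref{KC} finish the proof. But you then identify the crux as the inclusion $\mathrm{Int}\,\breve{T}\subseteq\mathcal P$ and propose to obtain it by ``quoting the theorem in the component form and verifying that its connected-component statement matches $\mathrm{Int}\,\breve{T}$.'' This is circular in effect: $\mathrm{Int}\,\breve{T}\subseteq\mathcal P$ is a consequence of the very statement you are proving (K\"ahler classes are numerically positive), and it does not follow from the connected-component form by any ``verification'' --- Theorem \ref{KC} says nothing about classes that are only known to pair positively against the linear generators $[Y]\wedge\omega^{p-1}$. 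To genuinely derive Theorem \ref{DP} from Theorem \ref{KC} you would need a boundary analysis rather than the inclusion into $\mathcal P$: let $t^*$ be the first exit time of $\alpha_t$ from $\mathcal K_J^c$, so $\beta=\alpha_{t^*}$ is nef but not K\"ahler; Theorem \ref{KC} then forces $\beta\notin\mathcal P$, hence $\int_Y\beta^p=0$ for some $Y$ (using that nef classes have non-negative intersection numbers against analytic subsets, itself a nontrivial fact on non-projective K\"ahler manifolds established in \cite{DP}); expanding $\int_Y\beta^{p-1}\wedge\bigl((1-t^*)\omega_0+t^*\alpha\bigr)=0$ one must then reach a contradiction with $\alpha\in\mathrm{Int}\,\breve{T}$, which requires treating the degenerate case $[Y]\wedge\beta^{p-1}=0$ by an induction on the largest $k$ with $[Y]\wedge\beta^{k}\wedge\omega_0^{p-1-k}\neq 0$. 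None of this is in your write-up. The honest alternatives are to supply that argument, or to cite the dual-cone formulation of \cite{DP} outright as the paper does --- but in the latter case the segment construction is superfluous.
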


An immediate consequence is

\begin{cor} \label{cyclecone} Let $J$ be a K\"ahler complex structure on
a real $2n-$dimensional manifold $M$. Then
\begin{equation}\label{cycle}
\begin{array}{ll}
&\hbox{Int }H\mathfrak C(J)=T\cr =&\hbox{PD}((\mathcal
K_J^c)^{n-1}) +HC\mathfrak Y^{1}\cdot \hbox{PD}((\mathcal
K_J^c)^{n-2})+ \cdots +HC\mathfrak Y^{n-1},
\end{array}
\end{equation}
 where
$HC\mathfrak Y^{i}\subset H_{\bar\partial}^{i,i}(M)_{\mathbb R}$ is
generated by classes of $(n-i)-$dimensional irreducible analytic
subsets of $M$.
\end{cor}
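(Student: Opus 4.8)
The plan is to derive Corollary~\ref{cyclecone} directly from Theorem~\ref{DP} together with the general duality set up in the preceding sections. The statement asserts two things: that the interior of $H\mathfrak C(J)$ equals the geometric cone $T$ defined in \eqref{T}, and that the reference to $H_J^{i,i}(M)_{\mathbb R}$ in \eqref{T} may be replaced by the Dolbeault group $H_{\bar\partial}^{i,i}(M)_{\mathbb R}$. The second point is immediate from Proposition~\ref{complex1}: since $J$ is K\"ahler, it is $C^{\infty}$ pure and full and the identification $H_J^{1,1}(M)_{\mathbb R}=H_{\bar\partial}^{1,1}(M)_{\mathbb R}$ holds, so the analytic cones $HC\mathfrak Y^{i}$ live naturally inside $H_{\bar\partial}^{i,i}(M)_{\mathbb R}$. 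Thus the real work is to prove the equality $\hbox{Int }H\mathfrak C(J)=T$.

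First I would recall that Theorem~\ref{DP} says $\mathcal K_J^c$ is the interior of the dual cone of $T$ inside $H_{\bar\partial}^{1,1}(M)_{\mathbb R}$. On the other hand, Theorem~\ref{generalization} (applied in the K\"ahler case, where $J$ is pure and full so that the pairing between $H_{\bar\partial}^{1,1}$ and $H^J_{1,1}$ is nondegenerate) says $\mathcal K_J^c$ is the interior of the dual cone of the \emph{full} cone $H\mathfrak C(J)$. The strategy is therefore to play these two dualities against each other: $\mathcal K_J^c$ is simultaneously the interior of the dual of $T$ and the interior of the dual of $H\mathfrak C(J)$. Since dualizing a cone and then taking the dual of its interior recovers the closed convex cone it generates, I would conclude that the closed convex cones generated by $T$ and by $H\mathfrak C(J)$ coincide. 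Because we always have $T\subset H\mathfrak C(J)$ (the geometric cycles are structure cycles, as noted after \eqref{T}), this forces the two cones to share the same interior, yielding $\hbox{Int }H\mathfrak C(J)=\hbox{Int }T$.

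The remaining step is to promote $\hbox{Int }T=\hbox{Int }H\mathfrak C(J)$ to the cleaner statement $\hbox{Int }H\mathfrak C(J)=T$ as written, i.e.\ to show that the cone $T$ defined by the explicit sum in \eqref{T} is already open (equal to its own interior) in $H_{\bar\partial}^{1,1}(M)_{\mathbb R}$. Here the first summand $\hbox{PD}((\mathcal K_J^c)^{n-1})$ is the image of the open cone $\mathcal K_J^c$ under the map $[\omega]\mapsto \hbox{PD}([\omega]^{n-1})$; since $\mathcal K_J^c$ is open and this map is a local diffeomorphism onto an open subset of $H^{n-1,n-1}_{\bar\partial}(M)_{\mathbb R}$ (its derivative is cup product with $(n-1)[\omega]^{n-2}$, which is an isomorphism by the hard Lefschetz theorem in the K\"ahler case), this leading term is open, and adding the nonnegative analytic contributions $HC\mathfrak Y^{i}\cdot\hbox{PD}((\mathcal K_J^c)^{n-2})+\cdots$ keeps the sum open.

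The main obstacle I anticipate is precisely this last openness argument, specifically justifying that $\hbox{PD}((\mathcal K_J^c)^{n-1})$ is open, which relies on hard Lefschetz to invert the differential of the power map; one must also check that the analytic cones $HC\mathfrak Y^{i}$ are genuinely contained in the relevant $(i,i)$ Dolbeault groups and pair correctly under the intersection product $D_p\cdot C_q\subset C_{p+q-n}$ recorded earlier. In dimension $n=2$ everything collapses: $T=\hbox{PD}(\mathcal K_J^c)+HC\mathfrak Y^{1}$ with $HC\mathfrak Y^{1}$ the cone on irreducible curves, and openness of $\hbox{PD}(\mathcal K_J^c)$ is immediate since the power map is the identity, so the subtlety only genuinely appears for $n\geq 3$.
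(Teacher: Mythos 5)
Your argument is correct and follows the route the paper itself intends: the paper offers no written proof beyond calling the corollary an ``immediate consequence'' of Theorem~\ref{DP}, and your derivation is exactly that consequence made explicit --- playing the duality of Theorem~\ref{DP} against that of Theorem~\ref{generalization} (equivalently Theorem~\ref{sullivan}), invoking the bipolar theorem together with $T\subset H\mathfrak C(J)$ to equate interiors, and using Proposition~\ref{complex1} to pass to the Dolbeault groups. Your additional observation that one must check $T$ is open (via hard Lefschetz applied to the differential of $[\omega]\mapsto[\omega]^{n-1}$) in order to upgrade $\hbox{Int}\,T=\hbox{Int}\,H\mathfrak C(J)$ to the stated equality $T=\hbox{Int}\,H\mathfrak C(J)$ is a genuine detail the paper leaves implicit, and you handle it correctly.
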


Here is another argument for Corollary \ref{first}.

\begin{proof}We first show that
\begin{equation}\label{sup}
\mathcal K_J^t\supset \mathcal
K_J^c+(H_{\bar\partial}^{2,0}(M)\oplus
H_{\bar\partial}^{0,2}(M))_{\mathbb R}.
\end{equation}
One proof of (\ref{sup}) is to evoke Theorem \ref{sullivan}.  We
only need to  observe that $(H_{\bar\partial}^{2,0}(M)\oplus
H_{\bar\partial}^{0,2}(M))_{\mathbb R}$ vanishes on $ H\mathfrak
C(J)$ and $\mathcal K_J^c$ is positive on $ H\mathfrak C(J)$. Thus
$\mathcal K_J^c+(H_{\bar\partial}^{2,0}(M)\oplus
H_{\bar\partial}^{0,2}(M))_{\mathbb R}$ is in the interior of the
dual of $ H\mathfrak C(J)$.

To prove the other direction, we apply Theorem \ref{DP} to obtain
\begin{equation}\label{sub}
\hbox{Int } \breve{H\mathfrak C(J)}= \hbox{Int } \breve T \subset
 \mathcal K_J^c+(H_{\bar\partial}^{2,0}(M)\oplus
H_{\bar\partial}^{0,2}(M))_{\mathbb R}.
\end{equation}

\end{proof}

To state the next result let us introduce a definition.

 \begin{definition} \label{np} A
degree 2 real cohomology class $V$ of a complex manifold $M$ is said
to be numerically positive on analytic cycles, or numerically
positive for short,   if $V^p$ pairs positively with  the homology
class of $Y$ for any irreducible analytic set $Y$ in $M$ with
$\dim_{\mathbb C} Y=p$. Let $\mathcal{NP}$ denote the set of
numerically positive classes.
\end{definition}

We have the following characterization of the $J-$tamed cone in
terms of numerically positive classes.

\begin{theorem}\label{JC}
Let $(M,J)$ be a compact K\"ahler manifold. Then the J-tamed cone
$\mathcal{K}_J^t$ is one of the connected components of
$\mathcal{NP}$.
\end{theorem}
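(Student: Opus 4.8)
The plan is to characterize $\mathcal{NP}$ in terms of the geometric cone $T$ of equation (\ref{T}) and then use the duality established in Theorem \ref{DP} to identify the taming cone with a component of $\mathcal{NP}$. First I would unwind Definition \ref{np}: a class $V \in H^2(M;\mathbb{R})$ is numerically positive precisely when $V^p \cdot [Y] > 0$ for every irreducible analytic set $Y$ with $\dim_{\mathbb{C}} Y = p$ (for all $p$). The key observation is that this is exactly the condition that $V$ pairs positively against each of the geometric generators appearing in the description (\ref{cycle}) of $\mathrm{Int}\, H\mathfrak{C}(J)$, where $HC\mathfrak{Y}^i$ is generated by classes of $(n-i)$-dimensional analytic subsets. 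So $\mathcal{NP}$ is controlled by the same geometric data as $T$.

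The main step is to relate $\mathcal{NP}$ to the dual cone $\breve{T}$. I would argue that a class $V$ is numerically positive if and only if $V^p$ pairs positively with every analytic cycle, and then leverage the fact (from Theorem \ref{DP} and Corollary \ref{cyclecone}) that when $V \in \mathcal{K}_J^c$ one has $V^{n-1} \in \mathrm{Int}\, H\mathfrak{C}(J)$, so the analytic cycles are detected cohomologically through powers of $V$. Concretely, I expect $\mathcal{K}_J^t$ to sit inside $\mathcal{NP}$ as follows: by Corollary \ref{first}, $\mathcal{K}_J^t = \mathcal{K}_J^c + (H_{\bar\partial}^{2,0}(M) \oplus H_{\bar\partial}^{0,2}(M))_{\mathbb{R}}$, and since the $(2,0)+(0,2)$ part vanishes on all analytic cycles (these are of type $(1,1)$), the numerical positivity of a class in $\mathcal{K}_J^t$ reduces to that of its $(1,1)$-component, which lies in $\mathcal{K}_J^c$; positivity there against analytic cycles is exactly Theorem \ref{DP}. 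This shows $\mathcal{K}_J^t \subset \mathcal{NP}$.

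The harder direction, and what I expect to be the main obstacle, is showing that $\mathcal{K}_J^t$ is a full \emph{connected component} of $\mathcal{NP}$, not merely a subset. For this I would show that $\mathcal{K}_J^t$ is both open and closed in $\mathcal{NP}$. Openness is immediate since $\mathcal{K}_J^t$ is an open cone in $H^2(M;\mathbb{R})$. For the closedness-within-$\mathcal{NP}$ part, I would argue that the boundary of $\mathcal{K}_J^t$ cannot meet the interior of $\mathcal{NP}$: a class $V$ on $\partial \mathcal{K}_J^t = \partial (\mathrm{Int}\, \breve{H\mathfrak{C}(J)})$ must pair to zero with some nonzero element of $H\mathfrak{C}(J)$, and because $\mathrm{Int}\, H\mathfrak{C}(J)$ is generated (via (\ref{cycle})) by analytic cycles and powers of compatible classes, such a boundary $V$ fails strict positivity against some analytic cycle or degenerates on $(\mathcal{K}_J^c)^{n-1}$, hence escapes $\mathcal{NP}$. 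The delicate point is handling the lower-dimensional analytic sets and the mixed terms $HC\mathfrak{Y}^i \cdot \mathrm{PD}((\mathcal{K}_J^c)^{n-1-i})$ in (\ref{T}): I would need the numerical positivity of $V$ (i.e. positivity of all powers $V^p$ on analytic sets) to force $V$ into the interior of $\breve{T}$, which is precisely the Demailly--Paun-type input. This is where the full strength of Theorem \ref{DP} is essential, since it is exactly the statement that $\mathcal{K}_J^c$ equals the interior dual cone of $T$, letting me translate the numerical positivity conditions into membership in $\mathrm{Int}\, \breve{T} = \mathcal{K}_J^t$.
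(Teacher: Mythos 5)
Your proposal contains two genuine gaps. First, the inclusion $\mathcal{K}_J^t\subset\mathcal{NP}$: you claim that because the $(2,0)+(0,2)$ part $W$ of $V=U+W$ ``vanishes on analytic cycles,'' numerical positivity of $V$ reduces to that of its $(1,1)$-part $U$. This is only true for $p=1$ (curves). For an analytic set $Y$ with $\dim_{\mathbb C}Y=p\geq 2$, the class $[Y]$ is of type $(p,p)$ and pairs with the $(p,p)$-component of $V^p=(U+W)^p$, which is \emph{not} $U^p$: it contains cross terms such as $\binom{p}{2}U^{p-2}\wedge\bigl(2W^{2,0}\wedge W^{0,2}\bigr)$. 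Already for $n=p=2$ and $Y=M$ one has $V^2\cdot[M]=U^2\cdot[M]+2\int_M W^{2,0}\wedge W^{0,2}\neq U^2\cdot[M]$ whenever $W\neq 0$. The paper avoids this entirely by arguing pointwise rather than cohomologically: writing $V=[\alpha]+[\beta]$ via (\ref{tc}) with $\alpha$ K\"ahler and $\beta$ of type $(2,0)+(0,2)$, the form $\alpha+\beta$ tames $J$ and hence restricts to a taming symplectic form on the smooth part of any irreducible $Y$, so $(\alpha+\beta)^p$ is a positive volume form there and $V^p\cdot[Y]>0$.

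Second, the ``connected component'' step. Your plan is to show $\mathcal{K}_J^t$ is open and relatively closed in $\mathcal{NP}$ by arguing that a boundary class of $\mathcal{K}_J^t$ pairs to zero with some nonzero element of $H\mathfrak C(J)$ and therefore leaves $\mathcal{NP}$. You correctly flag this as the delicate point, but it is not resolved and it faces a real obstruction: such a boundary class may only annihilate a limit of classes of the form $[Y]\cdot\mathrm{PD}(\omega^{p-1})$ (the mixed terms in (\ref{T}) involve powers of \emph{compatible} classes, not powers of $V$), and this does not produce an actual analytic set $Y$ with $V^p\cdot[Y]\leq 0$; moreover the same $(2,0)+(0,2)$ cross terms reappear when you try to transfer non-positivity from $U$ to $V$. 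The paper's route is different and cleaner: $\mathcal{K}_J^t$ is connected because the set of taming forms is convex, and it \emph{contains} a full component of $\mathcal{NP}$, namely $\widetilde{\mathcal{NP}^{1,1}}+(H_{\bar\partial}^{2,0}(M)\oplus H_{\bar\partial}^{0,2}(M))_{\mathbb R}$, where $\widetilde{\mathcal{NP}^{1,1}}$ is the component of the $(1,1)$-part of $\mathcal{NP}$ identified with $\mathcal{K}_J^c$ by the Demailly--Paun theorem (Theorem \ref{KC}), combined with $\mathcal{K}_J^t=\mathcal{K}_J^c+(H_{\bar\partial}^{2,0}(M)\oplus H_{\bar\partial}^{0,2}(M))_{\mathbb R}$ from Corollary \ref{first}. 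A connected subset of $\mathcal{NP}$ containing a whole component must equal that component. You would need to either adopt this containment argument or find a genuinely new way to exclude $\partial\mathcal{K}_J^t$ from $\mathcal{NP}$.
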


\begin{proof}
Since the space of $J-$tamed symplectic forms is a convex set the
cone $\mathcal K_J^t$ is connected. Thus it suffices to show the
following two inclusions: one component of $ \mathcal{NP}\subset
\mathcal{K}_J^t$ and $\mathcal{K}_J^t \subset \mathcal{NP}$.

Let $\mathcal{NP}^{1,1}$ be the $(1,1)$ part of $\mathcal {NP}$.
Then
 by the main result of \cite{DP} (c.f. Theorem
\ref{KC}) one component of $\mathcal{NP}^{1,1}$ is contained in
$\mathcal K^c_J$. Denote this component by
$\widetilde{\mathcal{NP}^{1,1}}$. Then by (\ref{tc}) we have
$$\widetilde{\mathcal{NP}^{1,1}}+
(H_{\bar\partial}^{2,0}(M)\oplus H_{\bar\partial}^{0,2}(M))_{\mathbb
R}\subset \mathcal K_J^c+(H_{\bar\partial}^{2,0}(M)\oplus
H_{\bar\partial}^{0,2}(M))_{\mathbb R}=\mathcal K_J^t.$$ On the
other hand, the component of $\mathcal{NP}$ which contains
$\widetilde{\mathcal{NP}^{1,1}}$ definitely belongs to
$$\widetilde{\mathcal{NP}^{1,1}}+ (H_{\bar\partial}^{2,0}(M)\oplus
H_{\bar\partial}^{0,2}(M))_{\mathbb R}.$$  Thus we have proved the
first inclusion.

To prove $\mathcal{K}_J^t \subset \mathcal{NP}$, write any $V\in
\mathcal{K}_J^t$ as $U+W$, where $U$ is the $(1,1)$ part. By
(\ref{tc}),  $U=[\alpha]$ for a K\"ahler form $\alpha$  and
$W=[\beta]$ for a real closed form in $ \Omega_J^{2,0}(M)\oplus
\Omega_J^{0,2}(M)$.
 Then $\alpha+\beta$ is a tamed symplectic form when restricted to the smooth part of any
 irreducible analytic set $Y$. It follows that $V$ is numerically
 positive, i.e. $\mathcal{K}_J^t \subset \mathcal{NP}$.

\end{proof}
\subsubsection{Relating to some classical theorems}
%Proposition
%\ref{first}  is about comparing the  K\"ahler structure versus the
%$J-$symplectic structure, i.e. it is about the pair (K\"ahler,
%$J$-symplectic).

Corollary \ref{first} and Theorem  \ref{JC} are parallel to some
classical theorems in K\"ahler geometry and algebraic geometry.

Let us recall some basic concepts in algebraic geometry. For a
complex manifold $(M, J)$ a very ample line bundle $L$ is a
holomorphic line bundle on $M$ with enough holomorphic sections to
set up an embedding of  $M$ into a projective space. An ample line
bundle $L$ is one whose certain tensor power becomes very ample.
Taking  the first Chern class of every ample line bundle we obtain
 a cone $\mathcal{A}$, called the
\emph{ample cone}. Clearly $\mathcal A$ lies in  the integral
$(1,1)$ cohomology group, i.e.
 $$\mathcal{A}\subset
H^{1,1}(M; \mathbb Z):=H^2(M; \mathbb Z)\cap H^{1,1}(M; \mathbb
R).$$ In this subsection we will simply write $H^{1,1}(M;\mathbb R)$
for $H_{\bar\partial}^{1,1}(M)_{\mathbb R}$.

Recall we denote  the K\"ahler cone by $\mathcal K_J^c \subset
H^{1,1}(M; \mathbb R)$. The classical Kodaira embedding theorem can
then be stated in the following way as a comparison of the triples
$$\hbox{(projective, $H^{1,1}(M; \mathbb Z)$, $\mathcal{A}$) \quad and \quad (K\"ahler,
$H^{1,1}(M; \mathbb R)$, $\mathcal K_J^c$)}.$$

\begin{theorem} \label{Kodaira}
Let $(M,J)$ be a K\"ahler manifold. If $\mathcal{A} \neq \emptyset$,
then
\begin{equation}\label{ample3}
\mathcal{A}=\mathcal K_J^c \cap H^{1,1}(M; \mathbb Z).
\end{equation}
\end{theorem}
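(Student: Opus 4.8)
The plan is to establish the two inclusions $\mathcal{A} \subseteq \mathcal{K}_J^c \cap H^{1,1}(M;\mathbb Z)$ and $\mathcal{K}_J^c \cap H^{1,1}(M;\mathbb Z) \subseteq \mathcal{A}$ separately, treating the statement as the Kodaira embedding theorem repackaged in cohomological form. For the first (easier) inclusion, I would start from an ample line bundle $L$: by definition some power $L^{\otimes k}$ is very ample, hence equals $\phi^*\mathcal{O}_{\mathbb P^N}(1)$ for an embedding $\phi: M \hookrightarrow \mathbb P^N$. Pulling back the Fubini--Study form produces a K\"ahler form representing $k\,c_1(L)$, so $k\,c_1(L)\in\mathcal{K}_J^c$; since $\mathcal{K}_J^c$ is a cone this gives $c_1(L)\in\mathcal{K}_J^c$, and $c_1(L)$ is automatically integral and of type $(1,1)$ as the Chern class of a holomorphic line bundle. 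Thus $\mathcal{A} \subseteq \mathcal{K}_J^c \cap H^{1,1}(M;\mathbb Z)$.

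For the reverse inclusion I would take $\alpha \in \mathcal{K}_J^c \cap H^{1,1}(M;\mathbb Z)$ and proceed in three steps. First, invoke the Lefschetz theorem on $(1,1)$-classes: since $\alpha$ is an integral class of type $(1,1)$, it is the first Chern class of a holomorphic line bundle $L$. Second, upgrade the abstract K\"ahler representative of $\alpha$ to the curvature of a metric on $L$: fix any Hermitian metric $h_0$ on $L$, whose Chern curvature $\frac{i}{2\pi}\Theta_{h_0}$ represents $\alpha$, and let $\omega$ be a K\"ahler form with $[\omega]=\alpha$. Then $\omega-\frac{i}{2\pi}\Theta_{h_0}$ is an exact real $(1,1)$-form, and the global $\partial\bar\partial$-lemma on the compact K\"ahler manifold $M$ produces a smooth function $\phi$ with $\omega-\frac{i}{2\pi}\Theta_{h_0}=\frac{i}{2\pi}\partial\bar\partial\phi$. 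Adjusting $h_0$ by the conformal factor $e^{-\phi}$ (with the appropriate sign) yields a metric $h$ whose curvature is exactly $\omega>0$, so $L$ is a positive line bundle. Third, the Kodaira embedding theorem asserts that a positive line bundle on a compact complex manifold is ample, giving $\alpha = c_1(L)\in\mathcal{A}$.

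The main obstacle lives in this reverse inclusion, and within it the genuinely deep input is the Kodaira embedding theorem, which I would cite rather than reprove; the Lefschetz $(1,1)$-theorem and the $\partial\bar\partial$-lemma are standard Hodge-theoretic facts on compact K\"ahler manifolds, and the only care needed is the sign/normalization bookkeeping in passing from a closed positive $(1,1)$-class to a metric of positive curvature, which is routine once the $\partial\bar\partial$-lemma is available. I would close by emphasizing the conceptual point: in the comparison of the triples (projective, $H^{1,1}(M;\mathbb Z)$, $\mathcal{A}$) and (K\"ahler, $H^{1,1}(M;\mathbb R)$, $\mathcal{K}_J^c$), the K\"ahler cone $\mathcal{K}_J^c$ detects positivity while $H^{1,1}(M;\mathbb Z)$ detects algebraicity, and Kodaira embedding is precisely the statement that these two conditions combine exactly into ampleness.
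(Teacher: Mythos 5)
Your proof is correct and matches the paper's intent: the paper offers no argument of its own here, presenting the statement as the classical Kodaira embedding theorem "stated in the following way," and your two inclusions (pullback of Fubini--Study for the easy direction; Lefschetz $(1,1)$, the $\partial\bar\partial$-lemma, and Kodaira embedding for the hard one) are exactly the standard argument being implicitly invoked. One minor remark: the hypothesis $\mathcal{A}\neq\emptyset$ is never actually needed in your reverse inclusion, since an integral K\"ahler class already forces projectivity --- which is consistent with your reading of the theorem as Kodaira embedding repackaged.
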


With this interpretation of the Kodaira embedding theorem our
Proposition \ref{first} can then be viewed as a (K\"ahler,
$J$-symplectic) analogue comparing the triples
$$\hbox{(K\"ahler,
$H^{1,1}(M; \mathbb R)$, $\mathcal K_J^c$) \quad  and \quad
($J$-symplectic, $H^2(M; \mathbb R)$, $\mathcal K_J^t$)}. $$ Here, a
$J$-symplectic manifold means a manifold with a symplectic form
tamed by a complex structure $J$. To see that let  $(M, J)$ be a
complex manifold with non-empty tamed cone $\mathcal K_J^t$. Then
the equation (\ref{ample3})  is exactly the analogue of the equation
(\ref{ct}).

In addition, our Theorem \ref{JC} is a Nakai-Moishezon type theorem
in the $J$-symplectic world. Recall that the original
Nakai-Moishezon characterizes the ample cone in terms of numerically
positive classes in Definition \ref{np}.
\begin{theorem}\label{NM}
The ample cone $\mathcal A$ of a projective manifold $(M, J)$ is
given by
\begin{equation}\label{ample2}
\mathcal{A}=\mathcal {NP} \cap H^{1,1}(M; \mathbb Z).
\end{equation}
\end{theorem}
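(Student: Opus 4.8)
The plan is to derive (\ref{ample2}) from the Kodaira embedding theorem (Theorem \ref{Kodaira}). Since $M$ is projective it carries an ample line bundle, so $\mathcal A\ne\emptyset$ and Theorem \ref{Kodaira} gives $\mathcal A=\mathcal K_J^c\cap H^{1,1}(M;\mathbb Z)$. As every element of $H^{1,1}(M;\mathbb Z)$ is a real class of type $(1,1)$, proving (\ref{ample2}) is therefore equivalent to the single identity
\[
\mathcal K_J^c\cap H^{1,1}(M;\mathbb Z)=\mathcal{NP}\cap H^{1,1}(M;\mathbb Z).
\]
I would then treat the two inclusions separately, the containment ``$\subset$'' being the easy half and the reverse containment carrying all the content.

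The easy inclusion $\mathcal K_J^c\subset\mathcal{NP}$ I would check directly from the definition of numerical positivity. If $V\in\mathcal K_J^c$ is represented by a K\"ahler form $\omega$, then for any irreducible analytic subset $Y$ with $\dim_{\mathbb C}Y=p$ the restriction of $\omega$ to the smooth locus of $Y$ is again a K\"ahler form, so the pairing of $V^p$ with $[Y]$ equals $\int_Y\omega^p>0$. Hence $V\in\mathcal{NP}$, and intersecting with $H^{1,1}(M;\mathbb Z)$ gives $\mathcal K_J^c\cap H^{1,1}(M;\mathbb Z)\subset\mathcal{NP}\cap H^{1,1}(M;\mathbb Z)$. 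Note that via (\ref{ct}) this is consistent with Theorem \ref{JC}: an integral $(1,1)$ class lies in $\mathcal K_J^c$ exactly when it lies in $\mathcal K_J^t$, and $\mathcal K_J^t\subset\mathcal{NP}$.

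The reverse inclusion is the genuine content and the step I expect to be the main obstacle. Given an integral class $V\in\mathcal{NP}\cap H^{1,1}(M;\mathbb Z)$, I must show $V\in\mathcal K_J^c$. By (\ref{ct}) this is the same as placing $V$ in $\mathcal K_J^t$, and Theorem \ref{JC} identifies $\mathcal K_J^t$ as \emph{one} connected component of $\mathcal{NP}$; the difficulty is precisely to rule out that the numerically positive class $V$ sits in a spurious component. On a surface this is automatic: by the Hodge index theorem $\{V:V^2>0\}$ has exactly two nappes, and the condition that $V$ pair positively with every curve forces $V$ into the nappe of a fixed polarization, so $\mathcal{NP}^{1,1}$ is connected and the reduction closes immediately. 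In higher dimensions the correct component cannot be singled out by such an elementary positivity-of-the-light-cone argument, and this is where genuinely projective input is needed. Here I would fall back on the classical induction on $\dim M$: fix a very ample $H$, restrict $V$ to a general smooth member of $|H|$ where numerical positivity is inherited, invoke the inductive hypothesis to conclude that $V|_H$ is ample, and then promote this together with the top intersection $V^{\dim M}>0$ to ampleness of $V$ via asymptotic Riemann--Roch, producing enough sections of high powers of the associated line bundle. This inductive core is exactly the part of Nakai--Moishezon that the transcendental Theorem \ref{JC} does not by itself supply, and completing it yields $V\in\mathcal A$, establishing the remaining inclusion.
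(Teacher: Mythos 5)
First, a point of comparison: the paper gives no proof of Theorem \ref{NM} at all. It is recalled as the classical Nakai--Moishezon criterion, stated only to set up the parallel with Theorems \ref{KC} and \ref{JC}, so there is no in-paper argument to measure your proposal against. Your reduction is sound as far as it goes: Kodaira embedding gives $\mathcal A=\mathcal K_J^c\cap H^{1,1}(M;\mathbb Z)$, and the inclusion $\mathcal K_J^c\subset\mathcal{NP}$ is exactly as you describe. One caveat on framing: in this paper (\ref{ct}) and Theorem \ref{JC} are themselves deduced from Demailly--Paun (Theorem \ref{KC}), which subsumes Nakai--Moishezon, so invoking them to prove the classical statement is logically backwards even if not strictly circular; your decision to discard that route and return to the classical induction is the right one.

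The gap is that the ``inductive core'' you defer to carries the entire content of the theorem, and as sketched it would not close. Three ingredients are missing. (i) You must first invoke the Lefschetz $(1,1)$ theorem to realize the integral $(1,1)$ class $V$ as $c_1(L)$ for a holomorphic line bundle $L$; without this there is no bundle whose sections can be counted. (ii) Asymptotic Riemann--Roch gives $\chi(mL)=\frac{L^n}{n!}m^n+O(m^{n-1})$ with $L^n>0$, but converting this into $h^0(mL)\sim cm^n$ requires bounding the higher cohomology; the standard device is to write $L=A-B$ with $A,B$ very ample and run \emph{two} restriction sequences (to general members of $|A|$ and of $|B|$), using the inductive hypothesis on those $(n-1)$-dimensional sections to show $h^i(mL)=O(m^{n-1})$ for $i\ge 2$. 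Restricting to a single hyperplane section $H$ and knowing that $V|_H$ is ample does not by itself control these groups. (iii) Even with a divisor $D\in|mL|$ in hand, ``enough sections'' only gives bigness; ampleness further requires base-point freeness of $|kmL|$ for large $k$ (lifting sections from $D$, where $L|_D$ is ample by induction, via the eventual stabilization of $H^1$) and then finiteness of the induced morphism, which uses $L\cdot C>0$ for every irreducible curve $C$. All of this is classical, but it is the theorem; a complete write-up must either carry out these steps or cite them precisely rather than appeal to Theorem \ref{JC}, which cannot single out the integral classes in the correct component.
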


Recall also the recent remarkable extension of Demailly-Paun in
\cite{DP} (first established in \cite{B} and \cite{L} in the case of
$n=2$).

\begin{theorem}\label{KC}
 The K\"ahler cone
of a K\"ahler manifold $(M, J)$  is one of the connected components
of
\begin{equation}\label{kahler2} \mathcal {NP} \cap
H^{1,1}(M; \mathbb R).
\end{equation}
\end{theorem}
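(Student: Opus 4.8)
The plan is to show that $\mathcal K_J^c$ is simultaneously open, relatively closed, nonempty and connected inside $\mathcal P:=\mathcal{NP}\cap H^{1,1}(M;\mathbb R)$, so that it is precisely one connected component. Openness is built into the definition of the K\"ahler cone, connectedness follows from convexity of the space of K\"ahler forms, and the inclusion $\mathcal K_J^c\subset\mathcal P$ is elementary: a K\"ahler class restricts to a K\"ahler class on the smooth locus of any irreducible analytic set $Y$ of dimension $p$, whence $\int_Y\alpha^p=\mathrm{vol}(Y)>0$. The entire force of the statement is therefore the relative closedness, which I would reformulate as a numerical criterion for nef classes: a class $\alpha\in\overline{\mathcal K_J^c}$ (nef) with $\int_Y\alpha^{\dim Y}>0$ for every irreducible analytic subset $Y$ is already K\"ahler. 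Granting this, any boundary point $\alpha\in\partial\mathcal K_J^c$ that happens to lie in $\mathcal P$ would be nef and numerically positive, hence K\"ahler and so interior, a contradiction; thus $\partial\mathcal K_J^c\cap\mathcal P=\emptyset$, i.e.\ $\mathcal K_J^c$ is relatively closed in $\mathcal P$, and a nonempty open, closed, connected subset is exactly one component.

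The analytic core is the construction of closed positive currents with controlled singularities inside a nef class, and I would isolate first the bigness step: a nef class $\alpha$ with $\int_M\alpha^n>0$ contains a K\"ahler current $T\geq\epsilon\,\omega$. This is the mass-concentration argument. Fixing a point $x_0$ and working with the slightly positive classes $\alpha+\delta\omega$, one solves a family of complex Monge--Amp\`ere equations $(\alpha+\delta\omega+\sqrt{-1}\,\partial\bar\partial\varphi_\delta)^n=f_\delta\,\omega^n$ whose right-hand sides $f_\delta$ concentrate a fixed positive fraction of the total mass $\int_M\alpha^n$ in a shrinking neighborhood of $x_0$, using the Aubin--Yau solution of the Calabi conjecture. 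Passing to a weak limit of the currents $\alpha+\sqrt{-1}\,\partial\bar\partial\varphi_\delta$ and controlling the Lelong numbers of the limit, one produces a closed positive current in $\alpha$ with a strictly positive Lelong number at $x_0$; together with the volume lower bound this shows $\alpha$ is big, i.e.\ carries a K\"ahler current.

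The second ingredient is Demailly's regularization of closed positive currents, which lets me replace $T$ by a current with \emph{analytic singularities} in the same class, at the cost of subtracting a small multiple of $\omega$. Its singular locus $Z$ is then a proper analytic subset, the non-K\"ahler locus of $\alpha$, and the goal becomes to prove $Z=\emptyset$. Here I would induct on $\dim M$: passing to a resolution of each component of $Z$, the pullback of $\alpha$ is again nef, and the numerical hypothesis applied to the analytic subsets contained in $Z$ shows it is numerically positive, hence K\"ahler on $Z$ by the inductive hypothesis. A gluing argument, patching the potential produced on a neighborhood of $Z$ with $T$ away from $Z$ and correcting by a global Monge--Amp\`ere solution, then yields a strictly positive current across $Z$, forcing $Z=\emptyset$ and $\alpha$ to be K\"ahler, which closes the induction.

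The main obstacle is exactly the mass-concentration step: controlling the Lelong numbers of weak limits of the Monge--Amp\`ere solutions, and the delicate current-theoretic gluing across the non-K\"ahler locus, are the genuinely hard analytic parts of \cite{DP}, and I see no way to bypass them by soft functional-analytic arguments of the kind used elsewhere in this paper. I would also remark that the dual-cone description of $\mathcal K_J^c$ already recorded in Theorem \ref{DP} is an equivalent packaging of the same theorem; deducing Theorem \ref{KC} from it directly would require matching numerical positivity against \emph{all} analytic cycles with positivity against the generators of the cone $T$ in \eqref{T}, and that equivalence is itself part of the deep input rather than a formal consequence.
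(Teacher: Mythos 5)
The paper offers no proof of this statement: Theorem \ref{KC} is quoted verbatim as the main theorem of Demailly--Paun \cite{DP} (with the $n=2$ case first established in \cite{B} and \cite{L}), so there is no internal argument to compare against. Your outline --- the soft reduction (openness, convexity, the elementary inclusion $\mathcal K_J^c\subset\mathcal{NP}$, and relative closedness recast as ``nef plus numerically positive implies K\"ahler''), followed by mass concentration via Monge--Amp\`ere solutions, Demailly regularization, and induction on the non-K\"ahler locus --- is a faithful summary of the Demailly--Paun proof, and you are right both that the analytic core is irreducibly deep and that the dual-cone statement recorded as Theorem \ref{DP} is an equivalent packaging of the same result rather than an independent route to it.
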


Theorem \ref{JC} can be trivially restated as that the tamed cone of
a K\"ahler manifold $(M, J)$ is one of the connected components of
\begin{equation}\label{tame2} \mathcal {NP} \cap
H^{2}(M; \mathbb R).
\end{equation}

The parallel between (\ref{ample2}), (\ref{kahler2}) and
(\ref{tame2}) is clear.
 Especially, in dimension 4, by virtue of
the second statement of Theorem \ref{main}, we can replace the first
sentence in Proposition \ref{JC}, ``Let $(M,J)$ be a compact
K\"ahler surface",  by ``Let $(M,J)$ be a compact J-symplectic
surface". Thus, at least in dimension 4, replacing the triple
$$\hbox{($J$-symplectic, $H^2(M; \mathbb R)$, $\mathcal K_J^t$)}$$
 by the
triple
$$\hbox{(projective, $H^{1,1}(M; \mathbb Z)$,
$\mathcal{A}$)}$$
 specializes to  the classical Nakai-Moishezon
theorem.

\subsection{Complex surfaces}
In this subsection we specialize to complex dimension $2$.
\subsubsection{Tamed cones and compatible cones}
 We start with the proof of
Theorem \ref{main}.

\begin{proof} If $\mathcal K_J^t(M)$ is empty, then clearly $\mathcal
K_J^c(M)$ is empty as well.

Suppose $\mathcal K_J^c(M)$ is empty. Then it follows from \cite{S},
\cite{T}, \cite{Mi}, \cite{B} that $b_1(M)$ is odd.
 If $b^+(M)=0$ then $M$ has no symplectic structures, hence
$\mathcal K_J^t(M)$ is empty. The remaining case is $b_1$ odd and
$b^+\geq 1$. In this case,
 by the Kodaira classification of surfaces (see \cite{BPV}),  $(M,J)$ is elliptic. By
\cite{HL}, for an elliptic surface $(M, J)$ with $b_1$ odd,  the
torus fibers bound, thus there cannot be tamed symplectic forms.
\end{proof}

\begin{example}
 The Kodaira-Thurston manifold  is an  example with $b_1$ odd. Thus $\mathcal
K_J^t$ is empty for every {\it integrable} $J$. But it does admit
symplectic structures hence $\mathcal K_J^t$ is not empty for some
non-integrable $J$.
\end{example}

It follows from Theorem \ref{sullivan} and Corollary \ref{cyclecone}
that, for a complex surface $(M, J)$,  $H\mathfrak C(J)$ contains
the 0 element if $J$ is non-K\"ahler, and is equal to
 $PD (\mathcal K_J^c)+HC\mathfrak Y$ if $J$ is K\"ahler.

\begin{remark} \label{buchdahl} In this case a slightly different
 description of $\mathcal K_J^c$  was first proved
in   \cite{B}, \cite{L} by Buchdahl and Lamari independently:
  Let $(M, J)$ be a complex surface. If $\mathcal K_J^c$ is non-empty,
then $\mathcal K_J^c$ is a connected component of
$$\{U\in
H_{\bar\partial}^{1,1}(M)_{\mathbb R}|\hbox{$U^2>0$ and $U$ is
positive on $HC\mathfrak Y$}\},$$ where $HC\mathfrak Y$ is the
convex cone in $H^2(M;\mathbb R)$ generated by classes of
 irreducible holomorphic curves.

Let $C^-\mathfrak Y$ denote the subcone generated by irreducible
curves of negative square. It follows from the Hodge index theorem
that the K\"ahler cone $K_J^c$ is given by a connected component of
$$\{U\in H_{\bar\partial}^{1,1}(M)_{\mathbb R}|\hbox{$U^2>0$ and $U$ is positive
on $HC^-\mathfrak Y$}\}.$$ In particular, if there are no
irreducible curves of negative square, the K\"ahler cone coincides
with  one connected component of the $(1,1)$ positive cone.
 Such examples include generic complex structures
on $K3$ or $T^4$.
\end{remark}

In higher dimensions we speculate that, for an integrable $J$, it is
possible that $K_J^c$ is empty while $K_J^t$ is nonempty. Such
examples could come from the total space of a holomorphic bundle
over a K\"ahler manifold admitting a class whose restriction is in
the tamed cone of each fiber. In such a situation, on the one hand,
there is the construction of tamed symplectic forms in \cite{G}, and
on the other hand, there are new subtle obstructions being K\"ahler
in \cite{V}. Here is a potential example inspired by \cite{V}.

\begin{example} \label{Voisin}
Let $T^{10}$ be the real torus of dimension 10, and let $K$ be  any
simply connected compact K\"ahler manifold satisfying the conditions
that {\rm rank}$ H^2(K;\mathbb{Q})=11$ and that the cohomology of
$K$ is generated in degree 2.  By K\"unneth decomposition and
Poincar\'e duality we have an inclusion ${\rm
Hom}(H^2(T^{10};\mathbb Q), H^2(K;\mathbb Q))\subset
H^{10}(T^{10}\times K;\mathbb Q)$. Consider a generic surjective map
$\mu:H^2(T^{10};\mathbb Q)\to H^2(K;\mathbb Q)$ and let $\lambda\in
H^{10}(T^{10}\times K;\mathbb Q)$ be the image of $\mu$ under the
inclusion.

 Suppose there is a
holomorphic bundle $E$ over $Y:=T^{10} \times K$ such that
$$c_1(E)=0,\quad \hbox{and} \quad c_5(E)=m\lambda$$ for some non zero integer
$m$.
Consider the complex manifold  $\mathbb{P}(E)$. By Theorem 3.5  in
\cite{V}, the complex manifold $\mathbb P(E)$ is not K\"ahler.

On the other hand, since the first Chern class of the tautological
line bundle of $\mathbb{P}(E)$  restrict to  a symplectic class of
each fiber, the complex manifold $\mathbb{P}(E)$ admits a tamed
symplectic structure by \cite{G}.

\end{example}

\subsubsection{Integrable tamed cone versus integrable compatible
cone} In this subsection let us now change our perspective slightly.
Rather than fixing a pair $(M, J)$ we fix a manifold $M$  and
consider all possible complex structures $J$  on $M$.

\begin{definition} Let $\mathcal K_{Int}^t(M)$ be the union of
${\mathcal K}_J^t(M)$ over all integrable $J$ on $M$,  called the
integrable tamed cone of $M$. Similarly define the integrable
compatible cone ${\mathcal K}_{Int}^c(M)$.
\end{definition}

$\mathcal K_{Int}^c(M)$ is called the cohomology K\"ahler cone in
\cite{Pe} and \cite{Dr}.
 In general
the integrable tamed cone is not equal to  the integral compatible
cone. For example, for a ball quotient $(M,J_0)$, the complex
structure $J_0$ is the unique  complex structure, thus by the
equation (\ref{tc})
$${\mathcal K}_{Int}^c(M)=\mathcal K_{J_0}^c(M)\subsetneq \mathcal
K_{J_0}^t(M)=\mathcal K_{Int}^t(M),$$ if
$H_{\bar\partial}^{2,0}(M)\ne 0$.

We can also fix a degree 2 class $e$ in $H^2(M;\mathbb Z)$ and
consider the subcone $\mathcal K_{Int}^t(M, e)$ of $\mathcal
K_{Int}^t(M)$, called the $e-$integrable tamed cone of $M$. It is
the union of ${\mathcal K}_J^t(M)$ over all integrable $J$ on $M$
with $e$ being the first Chern class. Similarly we have the subcone
$\mathcal K_{Int}^c(M, e)$ of $\mathcal K_{Int}^c(M)$, the
$e-$integrable compatible cone of $M$.

In the case that $e$ is realized by $-\omega$ for some K\"ahler form
$\omega$ on $M$, by the Hodge-Riemann bilinear relations, it can be
shown that
\begin{equation}\label{not equal}  {\mathcal K}_{Int}^c(M,e)\subsetneq \mathcal K_{Int}^t(M,e),
\end{equation}
in certain situations. More precisely, let $J$ be a complex
structure for which $-e=[\omega]$ where $\omega$ is a K\"ahler form
for $J$. Suppose
 $\beta$ is a nonzero real and closed form in $\Omega_J^{2,0}(M)\oplus
\Omega_J^{0,2}(M)$. Results in  \cite{Dr2}, \cite{Dr} and \cite{Pe}
can be interpreted as saying  that  $-e+t[\beta]$ is not in
$\mathcal K_{Int}^c(M, e)$ if
\begin{itemize}
\item $n$ is even, $\beta^{\frac{n}{2}}$ is not identically zero, and
$|t|$ is sufficiently large, or

\item $n=2$, $t\ne 0$.

\end{itemize}
 We further observe that the same is true for $n$ odd and $|t|\ne 0$
sufficiently small. On the other hand, as already used in the proof
of Lemma \ref{general}, $\omega+t\beta$ is tamed by $J$ for any $t$.
Thus in these cases we have the strict inclusion (\ref{not equal}).

Coming back to the cones ${\mathcal K}_{Int}^c(M)$ and $\mathcal
K_{Int}^t(M)$, we have the following conjecture in the case of
$n=2$.

\begin{conj}  For any
 $M$ underlying a complex surface of general type
with $p_g\geq 1$, we have \begin{equation}\label{not equal2}
{\mathcal K}_{Int}^c(M)\subsetneq \mathcal K_{Int}^t(M),
\end{equation}
  and for all other $4-$dimensional manifolds, we
 have the the equality
\begin{equation}\label{equal}  {\mathcal K}_{Int}^c(M)=\mathcal K_{Int}^t(M).
\end{equation}
\end{conj}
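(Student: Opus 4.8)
The plan is to first reduce to K\"ahler complex structures. By Theorem \ref{main}, a complex structure $J$ on the $4$--manifold $M$ contributes to $\mathcal K_{Int}^t(M)$ if and only if it contributes to $\mathcal K_{Int}^c(M)$, namely iff $J$ is K\"ahler; so $\mathcal K_{Int}^t=\bigcup_{J\,\mathrm{K\ddot{a}hler}}\mathcal K_J^t$ and $\mathcal K_{Int}^c=\bigcup_{J\,\mathrm{K\ddot{a}hler}}\mathcal K_J^c$, where by Corollary \ref{first} and \eqref{tc} each summand satisfies $\mathcal K_J^t=\mathcal K_J^c+\Pi_J$ with $\Pi_J:=(H_{\bar\partial}^{2,0}(M)\oplus H_{\bar\partial}^{0,2}(M))_{\mathbb R}$ a $Q$--positive-definite subspace of real dimension $2p_g$ that is $Q$--orthogonal to $H_{\bar\partial}^{1,1}(M)_{\mathbb R}$. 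Since $b^+=2p_g+1$ for a K\"ahler surface, $p_g$ is a topological invariant of $M$; when $b^+=1$ (equivalently $p_g=0$) every $\Pi_J$ is trivial and the equality \eqref{equal} is immediate, indeed already from Corollary \ref{dlz}. So I may assume $p_g\geq 1$, i.e. $b^+\geq 3$, and the whole content is to decide, case by case in the Kodaira classification, whether the extra directions $\Pi_J$ accumulated over all $J$ are already filled by compatible classes of other complex structures.

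For the strict inclusion \eqref{not equal2} when $M$ is minimal of general type I would exploit the rigidity of the canonical class together with the Hodge index theorem. The key input is Seiberg--Witten theory: the Kodaira dimension and the canonical class are diffeomorphism invariants, so every complex structure $J'$ on $M$ has $K_{J'}=\pm K$ for one fixed class $K$ with $K^2=c_1^2(M)>0$; in particular $K\in H_{\bar\partial}^{1,1}(M)_{\mathbb R}$ for every such $J'$, since $c_1(J')$ is always of type $(1,1)$. Now fix one K\"ahler $J_0$, a class $U\in\mathcal K_{J_0}^c$ and a nonzero $W\in\Pi_{J_0}$, and set $V=U+tW$. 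Using $Q(U,W)=0=Q(K,W)$ (type reasons) one gets $V\cdot K=U\cdot K$ while $V^2=U^2+t^2W^2\to\infty$; as $K^2>0$, for $t$ large $V^2K^2>(V\cdot K)^2$. Were $V$ compatible with some $J'$, then $V$ and $K$ would both lie in $H_{\bar\partial}^{1,1}$ for $J'$ with $V^2>0$, forcing the Hodge index inequality $(V\cdot K)^2\geq V^2K^2$, a contradiction. Hence $V\in\mathcal K_{Int}^t\setminus\mathcal K_{Int}^c$.

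For the equality \eqref{equal} on the non-general-type surfaces with $p_g\geq 1$ the reverse inclusion $\mathcal K_{Int}^c\subseteq\mathcal K_{Int}^t$ is automatic, so I must show every class tamed by some $J$ is compatible with some $J'$. For K3 surfaces and $2$--tori (both with $b^+=3$) I would identify this region with the entire connected positive cone $\{V:V^2>0\}$: any tamed class has $V^2>0$, and conversely, given $V^2>0$ one chooses a $Q$--positive $2$--plane $\Pi\subset V^\perp$ (possible since $V^\perp$ has signature $(2,b^-)$), realizes it as the period plane $\Pi_{J'}$ of a complex structure by surjectivity of the period map and the Torelli theorem (for tori one checks that the associated isotropic line is automatically decomposable, so every positive $2$--plane occurs), and arranges the Picard lattice to be generated by $V$ so that no negative curves cut down the K\"ahler cone; passing to the conjugate complex structure if necessary to select the correct component of the positive cone, $V$ becomes a K\"ahler class. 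Thus $\{V^2>0\}=\mathcal K_{Int}^c=\mathcal K_{Int}^t$.

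The principal obstacle is the properly elliptic case with $p_g\geq 1$ (and, secondarily, non-minimal surfaces of general type). Here the strategy of filling the whole positive cone must fail: for a minimal properly elliptic surface $K^2=0$, so the Hodge-index obstruction above degenerates (consistent with equality rather than strict inclusion), yet the Iitaka elliptic fibration, hence its fiber class $F$ with $F^2=0$, is canonical and rigid, so both $\mathcal K_{Int}^c$ and $\mathcal K_{Int}^t$ lie in the proper subregion $\{V\cdot F\neq 0\}$ of the positive cone and must be shown to coincide there \emph{directly}. Since the Kodaira dimension is a diffeomorphism invariant one cannot leave the class of elliptic complex structures, whose periods are far more constrained than in the K3 or torus case; matching the accumulated directions $\Pi_{J'}$ against the compatible cones of other elliptic structures (via logarithmic transforms and the moduli of the fibration) is exactly where new input is required. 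For non-minimal general type the difficulty is instead that $K_{J'}$ varies with the $J'$--exceptional curves, so there need not be a single positive-square class in $\bigcap_{J'}H_{\bar\partial}^{1,1}(M)_{\mathbb R}$ on which to run the Hodge index argument, and one would have to exploit the full finite set of Seiberg--Witten basic classes simultaneously. I expect the elliptic equality to be the hardest part.
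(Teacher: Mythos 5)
The statement you are asked about is a \emph{conjecture}: the paper does not prove it, and offers only partial supporting evidence in the surrounding discussion. Your proposal, to its credit, is honest about this --- it does not claim a complete proof and explicitly isolates the two cases that remain open (minimal properly elliptic surfaces with $p_g\geq 1$, and non-minimal surfaces of general type), which are exactly the cases the paper itself leaves to speculation. So the proposal cannot be graded as a proof of the statement; but as a body of partial results it is correct and tracks the paper's own evidence closely, in places sharpening it.

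In detail: your reduction to K\"ahler $J$ via Theorem \ref{main} and Corollary \ref{first}, and the $p_g=0$ case of \eqref{equal}, coincide with the paper's second and third observations. For the strict inclusion \eqref{not equal2} in the minimal general type case, the paper invokes the Hodge--Riemann relations through the results of Dr\u aghici and Perrone and consequently needs the extra hypothesis that some complex structure carries a K\"ahler form representing its canonical class; your version --- using the Seiberg--Witten invariance of $\{\pm K\}$, the fact that $c_1(J')$ is always of type $(1,1)$ with $K^2>0$, positivity of $Q$ on $\Pi_{J_0}$, and the Hodge index inequality $(V\cdot K)^2\geq V^2K^2$ applied with $K$ as a test class --- is the same mechanism but dispenses with that hypothesis, since it only needs $U$ to be \emph{some} K\"ahler class rather than $-K$ itself. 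This is a genuine (small) improvement over what the paper states. For Kodaira dimension zero with $p_g>0$ the paper simply cites \cite{Li}, whereas you sketch the Torelli/period-map argument filling the positive cone for $K3$ and $T^4$; your sketch is standard and plausible, though the step ``arrange the Picard lattice to be generated by $V$'' needs care for irrational $V$ (one wants no integral $(-2)$-classes in $H^{1,1}$, which is the generic condition actually used). Finally, both you and the authors leave the properly elliptic case with $p_g\geq 1$ and the non-minimal general type case unresolved, and your diagnosis of why (rigidity of the fiber class and the constrained periods of elliptic structures; the varying exceptional classes in $K_{J'}$) is a fair account of where new input is required.
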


This conjecture is plausible. In the case $n=2$, for a smooth
manifold $M$ underlying a minimal surface of general type, by the
Donaldson theory or the Seiberg-Witten theory (c.f. \cite{FQ},
\cite{Br}, \cite{FM}), the set $\{c_1(M, J), -c_1(M, J)\}$ is the
same for any $J$. Consequently, we have the strict inclusion
(\ref{not equal2}) if $b^+(M)\geq 3$ and there is a complex
structure on $M$ together with a K\"ahler form on $(M, J)$
representing its canonical class.

  Secondly,  an immediate consequence of Proposition \ref{main} is
that if $M$ has dimension 4 then  $\mathcal K_{Int}^t(M)$ is empty
if and only if $\mathcal K_{Int}^c(M)$ is empty.

 Thirdly, if  $p_g=0$ then $H^2(M;\mathbb
R)=H_{\bar\partial}^{1,1}(M)_{\mathbb R}$ and hence $\mathcal
K_J^t=\mathcal K_J^c$ by (\ref{tc}). Therefore we have (\ref{equal})
in this case.

Finally, when $p_g>0$, the equality (\ref{equal}) still holds when
the Kodaira dimension is zero (see \cite{Li}). We speculate that it
is also true when the Kodaira dimension is one, in particular, for
the  Elliptic surfaces $E(n)$.

\end{document}